\documentclass{cmslatex}

\usepackage[paperwidth=7in, paperheight=10in, margin=.875in]{geometry}
\usepackage[backref,colorlinks,linkcolor=red,anchorcolor=green,citecolor=blue]{hyperref}
\usepackage{amssymb}
\usepackage{amsmath}
\usepackage{graphicx}
\usepackage{enumerate}
\usepackage{cleveref}

\sloppy

\thinmuskip = 0.5\thinmuskip \medmuskip = 0.5\medmuskip
\thickmuskip = 0.5\thickmuskip \arraycolsep = 0.3\arraycolsep

\renewcommand{\theequation}{\arabic{section}.\arabic{equation}}

\crefname{assumption}{assumption}{assumptions}

\newcommand\numberthis{\addtocounter{equation}{1}\tag{\theequation}}

\allowdisplaybreaks

\begin{document}

\title{A Nonlocal diffusion model with $H^1$ convergence for Dirichlet Boundary\thanks{This work was supported by National Natural Science Foundation of China under grant 12071244.}}

\author{Tangjun Wang\thanks{Department of Mathematical Sciences, Tsinghua University, (wangtj20@mails.tsinghua.edu.cn).}
\and Zuoqiang Shi\thanks{Yau Mathematical Sciences Center, Tsinghua University \& Yanqi Lake Beijing Institute of Mathematical Sciences and Applications (zqshi@tsinghua.edu.cn).}}

\pagestyle{myheadings} \markboth{$H^1$ NONLOCAL DIFFUSION MODEL FOR DIRICHLET BOUNDARY}{Tangjun Wang, Zuoqiang Shi} \maketitle

\begin{abstract} 
In this paper, we present a nonlocal model for Poisson equation and corresponding eigenproblem with Dirichlet boundary condition. In the direct derivation of the nonlocal model, normal derivative is required which is not known for Dirichlet boundary. To overcome this difficulty, we treat the normal derivative as an auxiliary variable and derive corresponding nonlocal approximation of the boundary condition. For this specifically designed nonlocal model, we can prove its well-posedness and convergence to the counterpart local model. The nonlocal model is carefully designed such that coercivity and symmetry are preserved. Based on these good properties, we can prove the nonlocal model converges with first order rate in $H^1$ norm. Our model can be naturally extended to Poisson problems with Robin boundary and corresponding eigenvalue problem.
\end{abstract}

\begin{keywords}
Poisson equation, Nonlocal model, Dirichlet boundary, point integral method, convergence analysis
\end{keywords}

\begin{AMS}
35B40,45A05,60K50,65N12,74A70
\end{AMS}

\maketitle

\section{Introduction}
Nonlocal models play a crucial role in many fields and have been widely studied, such as in peridynamical theory of continuum mechanics, nonlocal wave propagation and nonlocal diffusion process~\cite{alfaro2017propagation,bavzant2003nonlocal,blandin2016well,dayal2007real,kao2010random,vazquez2012nonlinear}. We are particularly interested in the problems where nonlocal operators have compact support~\cite{du2019nonlocal, tian2013analysis}, parameterized by the nonlocal horizon $\delta$. 
In this paper, we consider the nonlocal diffusion model, whose local counterpart is the Poisson equation.
There have been a large literature on nonlocal diffusion models~\cite{d2020numerical,du2013nonlocal,du2017fast,mengesha2013analysis,ponce2004estimate,trask2019asymptotically,zhang2018accurate}. Such models usually appear in the field of peridynamics~\cite{askari2008peridynamics, bobaru2009convergence,dayal2006kinetics,oterkus2012peridynamic,silling2010crack,taylor2015two}, where the singularity of materials can be effectively captured. They are also closely related to a meshless numerical methods called smoothed particle hydrodynamics~\cite{gingold1977smoothed, lucy1977numerical}. Nonlocal models also play important role in machine learning especially in semi-supervised learning~\cite{wang2018non,tao2018nonlocal,shi2017weighted}. 

However, when boundary emerges, how to find a proper nonlocal analogous of local boundary conditions becomes a major issue. The traditional local boundary condition is not effective since the boundary is measure zero. One idea is to extend the boundary to a small volume adjacent to the boundary which is so-called volume constraints~\cite{du2012analysis}. Compared to  $O(\delta^2)$ convergence in the interior, naive extension only achieve $O(\delta)$ convergence on the boundary~\cite{macia2011theoretical,du2015integral}. High order extension gives high accuracy, but it is difficult to construct and requires high order derivatives of the local solution. Efforts have been made to properly design the nonlocal models or volume constraints in order to achieve better convergence rate, such as for Neumann boundary in one dimension~\cite{tao2017nonlocal} and two dimension~\cite{you2020asymptotically}, which can obtain $O(\delta^2)$ convergence in $L^2$ norm. The point integral method~\cite{li2017point,shi2017convergence} also proposes a nonlocal model to approximate the local Poisson equation with Neumann boundary, and proves $O(\delta)$ convergence in $H^1$ norm.


In parallel with the works on Neumann boundary, the Dirichlet boundary is also an intriguing problem. The most straightforward approach is the constant extension method~\cite{macia2011theoretical}, which assume that Dirichlet boundary condition is constantly extended to a small neighborhood outside the domain, but it only provides first order convergence. To achieve higher order convergence, the predominant approaches in this field enforce the no-slip condition (analogy of Dirichlet boundary condition in fluid dynamics) by extrapolating the velocity of a particle across its tangent plane~\cite{morris1997modeling,yildiz2009sph,holmes2011smooth}. These methods need to calculate or approximate the distance between particles and the domain boundaries, which can be costly. Recently, \cite{yang2022uniform} proves $O(\delta^2)$ convergence given the first-order derivatives on the boundaries, which is generally not given a priori. Lee and Du~\cite{lee2021second} introduce a nonlocal gradient operator to mimic the extrapolation of the boundary data to the volumetric data, and obtain optimal $O(\delta^2)$ convergence in $L^2$ norm, but it is analysed only in one dimension. Shi and Sun use point integral method~\cite{shi2017convergence}, and approximate the Dirichlet boundary condition by Robin boundary condition~\cite{shi2018harmonic}. In this approach, the proved convergence rate depends on the weighted parameter in Robin boundary condition, which is far from sharp. Zhang and Shi~\cite{zhang2021second} propose a $O(\delta^2)$ convergent model in $H^1$ norm, but its model contains extra terms including curvature and normal vector, and requires more regularity and curvature of the boundary, which is hard to obtain in many applications.

In this paper, we propose a new approach to handle Dirichlet boundary condition. 
We consider the Poisson equation with Dirichlet boundary condition~\eqref{eq:poisson}.
\begin{equation}
	\left\{\begin{aligned}
		-\Delta u(\mathbf{x}) &=f(\mathbf{x}), & & \mathbf{x} \in \Omega \\
		u(\mathbf{x}) & = b(\mathbf{x}), & & \mathbf{x} \in \partial \Omega
	\end{aligned}\right.
	\label{eq:poisson}
\end{equation}
Based on the point integral method~\cite{li2017point}, the Poisson equation is well approximated by an integral equation,
\begin{equation}
    \frac{1}{\delta^2}\int_{\Omega} R_\delta\left(\mathbf{x}, \mathbf{y} \right) (u(\mathbf{x}) - u(\mathbf{y}))\mathrm{d} \mathbf{y} - 2\int_{\partial \Omega}\bar{R}_\delta \left(\mathbf{x}, \mathbf{y} \right) \frac{\partial u}{\partial \mathbf{n}}(\mathbf{y})\mathrm{d} S_\mathbf{y} = \int_{\Omega} \bar{R}_\delta(\mathbf{x}, \mathbf{y}) f(\mathbf{y}) \mathrm{d} {\mathbf{y}},\quad \mathbf{x}\in \Omega. \label{eq:pim}
\end{equation}
for some kernel functions $R_\delta \left(\mathbf{x}, \mathbf{y} \right)$ and $\bar{R}_\delta \left(\mathbf{x}, \mathbf{y} \right)$ specified later.


However, we cannot directly enforce the Dirichlet boundary condition on the integral equation~\eqref{eq:pim}, since the normal derivative $\frac{\partial u}{\partial \mathbf{n}}$ is not given explicitly. We therefore treat $\frac{\partial u}{\partial \mathbf{n}}$ as a new variable, $v_\delta(\mathbf{x})$.
To close the system, we need more equation about $u$ and $\frac{\partial u}{\partial \mathbf{n}}$. This will be given by careful approximation of the Dirichlet boundary condition. 
The idea is to also apply integral approximation \eqref{eq:pim} for $\mathbf{x}\in \partial \Omega$. 
To get symmetry and coercivity, we need to modify the integral approximation on the boundary. First, to preserve the symmetry, we choose the integral kernel to be $\bar{R}_\delta(\mathbf{x},\mathbf{y})$ which implies the integral equation on the boundary as following
\begin{equation}
    \frac{1}{\delta^2}\int_{\Omega} \bar{R}_\delta\left(\mathbf{x}, \mathbf{y} \right) (b(\mathbf{x}) - u(\mathbf{y}))\mathrm{d} \mathbf{y} - 2\int_{\partial \Omega}\bar{\bar{R}}_\delta \left(\mathbf{x}, \mathbf{y} \right) \frac{\partial u}{\partial \mathbf{n}}(\mathbf{y})\mathrm{d} S_\mathbf{y} 
    = \int_{\Omega} \bar{\bar{R}}_\delta(\mathbf{x}, \mathbf{y}) f(\mathbf{y}) \mathrm{d} {\mathbf{y}},~ \mathbf{x}\in \partial \Omega. \label{eq:pim-bd1}
\end{equation}
Here $\bar{\bar{R}}_\delta(\mathbf{x}, \mathbf{y})$ is also a kernel function defined later.

To get coercivity, we further simplify the second term of the left hand side of~\eqref{eq:pim-bd1} by moving $\frac{\partial u}{\partial \mathbf{n}}$ out of the integral.
\begin{equation}
    \frac{1}{\delta^2}\int_{\Omega} \bar{R}_\delta\left(\mathbf{x}, \mathbf{y} \right) (b(\mathbf{x}) - u(\mathbf{y}))\mathrm{d} \mathbf{y} - 2\frac{\partial u}{\partial \mathbf{n}}(\mathbf{x})\int_{\partial \Omega}\bar{\bar{R}}_\delta\left(\mathbf{x}, \mathbf{y} \right) \mathrm{d} S_\mathbf{y} = \int_{\Omega} \bar{\bar{R}}_\delta(\mathbf{x}, \mathbf{y}) f(\mathbf{y}) \mathrm{d} {\mathbf{y}},~ \mathbf{x}\in \partial \Omega. \label{eq:pim-bd} 
\end{equation}
Eventually, we get a nonlocal model as follows
\begin{equation}
	\left\{\begin{aligned}
		\mathcal{L}_{\delta} u_{\delta}(\mathbf{x})-\mathcal{G}_{\delta} v_{\delta}(\mathbf{x})&=F_{\mathrm{in}}(\mathbf{x}), && \mathbf{x} \in \Omega \\[5pt] \bar{\mathcal{L}}_{\delta} u_{\delta}(\mathbf{x})-\bar{\mathcal{G}}_{\delta} v_{\delta}(\mathbf{x}) &=F_{\mathrm{bd}}(\mathbf{x}), && \mathbf{x} \in \partial \Omega
	\end{aligned}\right.
	\label{eq:model}
\end{equation}
where the operators are defined as
\begin{gather*}
    \mathcal{L}_{\delta} u_{\delta}(\mathbf{x})=\frac{1}{\delta^{2}} \int_{\Omega}R_{\delta}(\mathbf{x}, \mathbf{y}) \left(u_{\delta}(\mathbf{x})-u_{\delta}(\mathbf{y})\right) \mathrm{d} {\mathbf{y}}, \quad \bar{\mathcal{L}}_{\delta} u_{\delta}(\mathbf{x})=\frac{1}{\delta^{2}} \int_{\Omega}\bar{R}_{\delta}(\mathbf{x}, \mathbf{y}) \left(b(\mathbf{x})-u_{\delta}(\mathbf{y})\right) \mathrm{d} {\mathbf{y}} \\
    \mathcal{G}_{\delta} v_{\delta}(\mathbf{x})=2 \int_{\partial \Omega} \bar{R}_{\delta}(\mathbf{x}, \mathbf{y}) v_{\delta}(\mathbf{y}) \mathrm{d} S_{\mathbf{y}}, \quad \bar{\mathcal{G}}_{\delta} v_{\delta}(\mathbf{x})=2 \int_{\partial \Omega} \bar{\bar{R}}_{\delta}(\mathbf{x}, \mathbf{y}) v_{\delta}(\mathbf{x}) \mathrm{d} S_{\mathbf{y}} \\
    F_{\mathrm{in}}(\mathbf{x})=\int_{\Omega} \bar{R}_{\delta}(\mathbf{x}, \mathbf{y}) f(\mathbf{y}) \mathrm{d} {\mathbf{y}}, \quad F_{\mathrm{bd}}(\mathbf{x})=\int_{\Omega} \bar{\bar{R}}_{\delta}(\mathbf{x}, \mathbf{y}) f(\mathbf{y}) \mathrm{d} {\mathbf{y}} 
\end{gather*}
Our goal in this paper is to present that model~\eqref{eq:model} assures a unique solution $\left(u_{\delta}, v_{\delta}\right)$ and it converges to the solution $\left(u, \frac{\partial u}{\partial \mathbf{n}}\right)$ of problem~\eqref{eq:poisson} as $\delta \rightarrow 0$.

\begin{remark}
    \label{remark:right_zero}
    We can set the right-hand side of \Cref{eq:pim-bd} to zero, i.e., $F_{\mathrm{bd}}(\mathbf{x})=0$, without affecting the main results in the paper, as explained in the proof of \Cref{thm:spectra_convergence}. 
\end{remark}

The main contribution of this paper is that, for Poisson equation with Dirichlet condition, we prove that the solution computed by the nonlocal model~\eqref{eq:model} converges to the counterpart local solution with the rate of $O(\delta)$ in $H^1$ norm. We use standard techniques in numerical analysis, which combine consistency and stability to prove convergence. We prove the coercivity of the nonlocal operator, which implies the stability of our method. Together with the estimate of the truncation error, we get the convergence of our nonlocal model to the local Poisson equation. Our nonlocal model applies in any smooth domain in any dimension even in smooth manifold. This gives a very general approach to handle the Dirichlet boundary condition in nonlocal diffusion model.  

The paper is organized as follows. In \Cref{sec:prelim} we state several basic assumptions and estimates. Main results are presented in \Cref{sec:main} as \Cref{thm:wellposed} and \Cref{thm:convergence}. Proof of two theorems are provided in \Cref{sec:wellposed} and \Cref{sec:convergence} respectively. In \Cref{sec:robin}, we study the extension to Poisson equation with Robin boundary condition. Convergence of the eigenvalue problem is analysed in \Cref{sec:spectra}. Finally, we conclude the paper in \Cref{sec:conclusion}.

\section{Preliminaries}
\label{sec:prelim}


First we state some basic assumptions used in our analysis.
\begin{assumption}[Assumptions on the domain]
\label{assumption:domain}
$\Omega \in \mathbb{R}^n$ is open, bounded and connected. $\partial \Omega$ is $C^2$ smooth.
\end{assumption}


\begin{assumption}[Assumptions on the kernel function]
\label{assumption:kernel}
\hspace{1pt}
\begin{itemize}
    \item[(a)] Smoothness: $R\in C^1([0,1])$;
    \item[(b)] Nonnegativity: $R(r)\ge 0$ for $\forall r \ge 0$;
    \item[(c)] Compact support: $R(r) = 0$ for $\forall r >1$;
    \item[(d)] Nondegeneracy: $\exists \gamma_0>0$ so that $R(r)\ge\gamma_0$ for $0\le r\le\frac{1}{2}$.
\end{itemize}

\end{assumption}

$\bar{R}$ and $\bar{\bar{R}}$ are defined as
\[\bar{R}(r)=\int_r^{+\infty}R(s)\mathrm{d} s, \quad \bar{\bar{R}}(r)=\int_r^{+\infty}\bar{R}(s)\mathrm{d} s\]

Then the rescaled kernel function $R_\delta(\mathbf{x}, \mathbf{y}), \bar{R}_\delta(\mathbf{x}, \mathbf{y}), \bar{\bar{R}}_\delta(\mathbf{x}, \mathbf{y})$ are given as
\[\tilde{R}_\delta(\mathbf{x}, \mathbf{y}) = C_\delta \tilde{R}\left(\frac{\|\mathbf{x} -\mathbf{y}\|^2}{4\delta^2}\right)\]
for $\tilde{R}\in [R,\bar{R},\bar{\bar{R}}]$. The constant $C_\delta=\alpha_n \delta^{-n}$ is a normalization factor so that 
\[\int_{\mathbb{R}^n}R_\delta(\mathbf{x}, \mathbf{y}) \mathrm{d} \mathbf{y}= \alpha_n S_n \int_0^1 R(\frac{r^2}{4})r^{n-1}\mathrm{d} r = 1\]
with $S_n$ denotes the surface area of unit sphere in $\mathbb{R}^n$.

Here we provide some basic estimates that we use later.
\begin{proposition}
\label{prop:estimate}
Let $\tilde{R}$ be a kernel function satisfying \Cref{assumption:kernel} (a)(b)(c) and 
\[\tilde{R}_\delta (\mathbf{x},\mathbf{y})=\alpha_n\delta^{-n} \tilde{R}\left(\frac{\|\mathbf{x}-\mathbf{y}\|^2}{4\delta^2}\right)\]
Then there exists a constant $\delta_0>0$ only dependent on $\Omega$ and $\tilde{R}$, such that for $\delta \le \delta_0$, $\forall \mathbf{x} \in \Omega\cup \partial \Omega$
\[\frac{C_1}{3} < \int_\Omega \tilde{R}_\delta(\mathbf{x},\mathbf{y}) \mathrm{d} \mathbf{y} \le C_1, \qquad \frac{C_2}{3\delta} < \int_{\partial \Omega} \tilde{R}_\delta(\mathbf{x},\mathbf{y}) \mathrm{d} S_\mathbf{y} \le \frac{C_2}{\delta}\]
where
\[C_1=\alpha_n S_n \int_0^1 \tilde{R}(\frac{r^2}{4})r^{n-1}\mathrm{d} r, \qquad C_2= \alpha_n S_{n-1} \int_0^1 \tilde{R}(\frac{r^2}{4})r^{n-2}\mathrm{d} r\]
are constants independent of $\delta$.
\end{proposition}

\begin{proof}
See \Cref{subsec:a.1}
\end{proof}

Clearly $\tilde{R}\in [R,\bar{R},\bar{\bar{R}}]$ satisfies \Cref{assumption:kernel} (a)(b)(c), thus having the estimates above. We define $w_\delta, \bar{w} _\delta$, and $\bar{\bar{w}}_\delta$ as below, which will be used throughout this article.
\[w_\delta(\mathbf{x}):=\int_{\Omega} R_{\delta}(\mathbf{x}, \mathbf{y}) \mathrm{d} \mathbf{y}, \quad \bar{w}_\delta(\mathbf{x}):=\int_{\Omega} \bar{R}_{\delta}(\mathbf{x}, \mathbf{y}) \mathrm{d} \mathbf{y}, \quad \bar{\bar{w}}_\delta(\mathbf{x}):=\int_{\partial \Omega} \bar{\bar{R}}_{\delta}(\mathbf{x}, \mathbf{y}) \mathrm{d} S_{\mathbf{y}}\]

Finally, to simplify the notation and make the proof concise, we consider the homogeneous Dirichlet boundary conditions, i.e
\[\left\{\begin{aligned}
		-\Delta u(\mathbf{x}) &=f(\mathbf{x}), & & \mathbf{x} \in \Omega \\
		u(\mathbf{x}) & = 0, & & \mathbf{x} \in \partial \Omega
	\end{aligned}\right.\]
and consequently the operator $\bar{\mathcal{L}}_{\delta}$ in~\eqref{eq:model} becomes
\[\bar{\mathcal{L}}_{\delta} u_{\delta}(\mathbf{x})= - \frac{1}{\delta^{2}} \int_{\Omega} \bar{R}_{\delta}(\mathbf{x}, \mathbf{y}) u_{\delta}(\mathbf{y}) \mathrm{d} \mathbf{y}\]
All results can be easily extended to nonhomogeneous situations as long as $b(\mathbf{x}) \in L^2(\partial \Omega)$.

\section{Main Results}
\label{sec:main}

\begin{theorem}[Well-Posedness]
    \label{thm:wellposed}
    For fixed $\delta>0$ and $f \in L^2(\Omega)$, there exists a unique solution $u_{\delta} \in$ $L^{2}(\Omega), v_{\delta} \in L^{2}(\partial \Omega)$ to the integral model~\eqref{eq:model}.
    
    Moreover, we have $u_{\delta} \in$ $H^1(\Omega)$ and the following estimate, with constant $C>0$ independent of $\delta$,
    \[\left\|u_{\delta}\right\|_{H^1(\Omega)} \leq C \left\|f\right\|_{L^2(\Omega)}\]
\end{theorem}

\begin{remark}
    \label{remark:h-1}
    By extending the domain $\Omega$ to $\Omega_\delta=\left\{x| \text{dist}(\mathbf{x}, \Omega) \leq 2\delta \right\}$ and setting the right-hand side of \Cref{eq:pim-bd} to zero, as mentioned in \Cref{remark:right_zero}, we can establish an $L^2$ estimate for $u_\delta$ under a more generalized condition $f\in H^{-1}(\Omega_\delta)$. The proposition and its corresponding proof are provided in \Cref{sec:remark_proof}.
\end{remark}

\begin{theorem}[Convergence]
    \label{thm:convergence}
    Let $f \in H^{1}(\Omega)$, $u$ be the solution to the Poisson equation~\eqref{eq:poisson}, and $\left(u_{\delta}, v_{\delta}\right)$ be the solution to the integral model~\eqref{eq:model}, then we have the following estimate, with constant $C>0$ independent of $\delta$,
    \[\left\|u-u_{\delta}\right\|_{H^{1}(\Omega)}\leq C \delta\left\|f\right\|_{H^{1}(\Omega)}\]
    \[\left\|\frac{\partial u}{\partial \mathbf{n}}-v_{\delta}\right\|_{L^{2}(\partial \Omega)} \leq C \delta^{\frac{1}{2}}\left\|f\right\|_{H^{1}(\Omega)}\]
\end{theorem}

The proof of two theorems will be provided in \Cref{sec:wellposed} and \Cref{sec:convergence} respectively. Methods used in the analysis are standard. The nonlocal model is carefully designed such that it is equivalent to a bilinear form. The wellposedness in $L^2$ space can be proved by Lax-Milgram theorem. Then the regularity is boosted to $H^1$ space using the special structure of the nonlocal model. Convergence is obtained by combining local truncation error and stability. The stability analysis is basically similar to the coercivity of the bilinear form. Together with the estimate of the local truncation error, we can prove the first order convergence in $H^1$ norm.


\section{\texorpdfstring{Well-Posedness (\Cref{thm:wellposed})}{Well-Posedness}}
\label{sec:wellposed}

\subsection{Existence and uniqueness}
In this subsection, we will prove the first part of \Cref{thm:wellposed}, i.e. the existence and uniqueness of solution $\left(u_{\delta}, v_{\delta}\right)$ to \Cref{eq:model}, by Lax-Milgram Theorem. We observe that in the second equation of \Cref{eq:model},
\[\bar{\mathcal{G}}_{\delta} v_{\delta}(\mathbf{x})=2 \int_{\partial \Omega} \bar{\bar{R}}_{\delta}(\mathbf{x}, \mathbf{y}) v_{\delta}(\mathbf{x}) \mathrm{d} S_{\mathbf{y}} = 2\bar{\bar{w}}_\delta(\mathbf{x}) v_{\delta}(\mathbf{x})\]

Thus we can eliminate $v_\delta$ in the first equation by noticing that
\begin{equation}
    v_\delta(\mathbf{x})=-\frac{1}{2\delta^2\bar{\bar{w}}_\delta(\mathbf{x})} \int_{\Omega} \left(\bar{R}_{\delta}(\mathbf{x}, \mathbf{y}) u_\delta(\mathbf{y}) +\delta^2 \bar{\bar{R}}_{\delta}(\mathbf{x}, \mathbf{y}) f(\mathbf{y}) \right)\mathrm{d} \mathbf{y}, \quad \mathbf{x}\in \partial\Omega
    \label{eq:v_eliminate}
\end{equation}
	
So the first equation becomes
\begin{align*}
	\frac{1}{\delta^2} & \int_{\Omega} R_{\delta}(\mathbf{x}, \mathbf{y})(u_\delta(\mathbf{x})-u_\delta(\mathbf{y})) \mathrm{d} \mathbf{y}+\frac{1}{\delta^2} \int_{\partial \Omega} \frac{\bar{R}_{\delta}(\mathbf{x}, \mathbf{y})}{\bar{\bar{w}}_\delta(\mathbf{y})} \int_{\Omega} \bar{R}_{\delta}(\mathbf{y}, \mathbf{s}) u_\delta(\mathbf{s}) \mathrm{d} \mathbf{s} \mathrm{d} S_{\mathbf{y}}\\
	= {}& \int_{\Omega}  \bar{R}_{\delta}(\mathbf{x}, \mathbf{y}) f(\mathbf{y})\mathrm{d} \mathbf{y} - \int_{\partial \Omega} \frac{\bar{R}_{\delta}(\mathbf{x}, \mathbf{y})}{\bar{\bar{w}}_\delta(\mathbf{y})} \int_{\Omega} \bar{\bar{R}}_{\delta}(\mathbf{y}, \mathbf{s}) f(\mathbf{s}) \mathrm{d} \mathbf{s} \mathrm{d} S_{\mathbf{y}} \numberthis \label{eq:lax_milgram}
\end{align*}

We introduce the bilinear form $B : L^2(\Omega)\times L^2 (\Omega)\rightarrow \mathbb{R}$,
\begin{align*}
    B[u_\delta, p_\delta] ={}& \frac{1}{\delta^2} \int_{\Omega} p_\delta(\mathbf{x}) \int_{\Omega} R_{\delta}(\mathbf{x}, \mathbf{y})(u_\delta(\mathbf{x})-u_\delta(\mathbf{y})) \mathrm{d} \mathbf{y} \mathrm{d} \mathbf{x} \\
    &+ \frac{1}{\delta^2} \int_{\Omega} p_\delta(\mathbf{x}) \int_{\partial \Omega} \frac{\bar{R}_{\delta}(\mathbf{x}, \mathbf{y})}{\bar{\bar{w}}_\delta(\mathbf{y})} \int_{\Omega} \bar{R}_{\delta}(\mathbf{y}, \mathbf{s}) u_\delta(\mathbf{s}) \mathrm{d} \mathbf{s} \mathrm{d} S_{\mathbf{y}} \mathrm{d} \mathbf{x}
\end{align*}
and denote $F : L^2(\Omega) \rightarrow \mathbb{R}$ as
\[\langle F, p_\delta \rangle = \int_{\Omega} p_\delta(\mathbf{x}) \int_{\Omega}  \bar{R}_{\delta}(\mathbf{x}, \mathbf{y}) f(\mathbf{y})\mathrm{d} \mathbf{y} \mathrm{d} \mathbf{x} - \int_{\Omega} p_\delta(\mathbf{x}) \int_{\partial \Omega} \frac{\bar{R}_{\delta}(\mathbf{x}, \mathbf{y})}{\bar{\bar{w}}_\delta(\mathbf{y})} \int_{\Omega} \bar{\bar{R}}_{\delta}(\mathbf{y}, \mathbf{s}) f(\mathbf{s}) \mathrm{d} \mathbf{s} \mathrm{d} S_{\mathbf{y}} \mathrm{d} \mathbf{x}\]
for any $p_\delta \in L^2(\Omega)$. We need to prove the existence and uniqueness of solution $u_{\delta}$ to the following equation
\[B[u_\delta,p_\delta]=\langle F, p_\delta \rangle\]
To utilize the Lax-Milgram Theorem, we need the following three estimates: \Cref{prop:continuous} (Continuity), \Cref{prop:coercive} (Coercivity) and \Cref{prop:bounded} (Boundedness).


\begin{proposition}[Continuity]
\label{prop:continuous}
    For any $u_\delta, p_\delta \in L^2(\Omega)$, there exists a constant $C>0$ independent of $\delta$ such that
    \[B[u_\delta, p_\delta]\leq \frac{C}{\delta^2}\|u_\delta\|_{L^2(\Omega)}\|p_\delta\|_{L^2(\Omega)}\]
\end{proposition}
\begin{proof}
	\begin{align*}
		B[u_\delta, p_\delta]=& \frac{1}{\delta^2} \int_{\Omega} p_\delta(\mathbf{x}) \int_{\Omega} R_{\delta}(\mathbf{x}, \mathbf{y})u_\delta(\mathbf{x})\mathrm{d} \mathbf{y} \mathrm{d} \mathbf{x} - \frac{1}{\delta^2} \int_{\Omega} p_\delta(\mathbf{x}) \int_{\Omega} R_{\delta}(\mathbf{x}, \mathbf{y}) u_\delta(\mathbf{y}) \mathrm{d} \mathbf{y} \mathrm{d} \mathbf{x} \\
		& +\frac{1}{\delta^2} \int_{\Omega} p_\delta(\mathbf{x}) \int_{\partial \Omega} \frac{\bar{R}_{\delta}(\mathbf{x}, \mathbf{y})}{\bar{\bar{w}}_\delta(\mathbf{y})} \int_{\Omega} \bar{R}_{\delta}(\mathbf{y}, \mathbf{s}) u_\delta(\mathbf{s}) \mathrm{d} \mathbf{s} \mathrm{d} S_{\mathbf{y}} \mathrm{d} \mathbf{x}
	\end{align*}
	
	Recall the estimates we have established in \Cref{prop:estimate}. The first term
	\begin{align*}
		&\int_{\Omega} p_\delta(\mathbf{x}) \int_{\Omega} R_{\delta}(\mathbf{x}, \mathbf{y})u_\delta(\mathbf{x})\mathrm{d} \mathbf{y} \mathrm{d} \mathbf{x} = \int_{\Omega} p_{\delta}(\mathbf{x}) u_{\delta}(\mathbf{x}) \left(\int_{\Omega} R_{\delta}(\mathbf{x}, \mathbf{y})\mathrm{d} \mathbf{y} \right) \mathrm{d} \mathbf{x} \\
		\leq {} & \vphantom{\int}  C \int_{\Omega} p_{\delta}(\mathbf{x}) u_{\delta}(\mathbf{x}) \mathrm{d} \mathbf{x} \leq C\left\|p_{\delta}\right\|_{L^{2}(\Omega)}\left\|u_{\delta}\right\|_{L^{2}(\Omega)}
	\end{align*}
	
	The second term can be bounded by using Cauchy-Schwarz inequality twice
	\begin{align*}
		\int_{\Omega} p_\delta(\mathbf{x}) \int_{\Omega} R_{\delta}(\mathbf{x}, \mathbf{y}) & u_\delta(\mathbf{y})  \mathrm{d} \mathbf{y} \mathrm{d} \mathbf{x}	\leq \left[\int_{\Omega} p_\delta^2(\mathbf{x})\mathrm{d} \mathbf{x} \int_{\Omega} \left( \int_{\Omega} R_{\delta}(\mathbf{x}, \mathbf{y})u_\delta(\mathbf{y}) \mathrm{d} \mathbf{y} \right)^2 \mathrm{d} \mathbf{x} \right]^{\frac{1}{2}}\\
		\leq {}& \left\|p_{\delta}\right\|_{L^{2}(\Omega)} \left[ \int_{\Omega} \left(\int_{\Omega} R_{\delta}(\mathbf{x}, \mathbf{y})\mathrm{d} \mathbf{y} \right)\left(\int_{\Omega} R_{\delta}(\mathbf{x}, \mathbf{y})u_\delta^2(\mathbf{y}) \mathrm{d} \mathbf{y} \right) \mathrm{d} \mathbf{x} \right]^{\frac{1}{2}}\\
		\leq {}& C_1 \left\|p_{\delta}\right\|_{L^{2}(\Omega)} \left[ \int_{\Omega} \int_{\Omega} R_{\delta}(\mathbf{x}, \mathbf{y})u_\delta^2(\mathbf{y}) \mathrm{d} \mathbf{x} \mathrm{d} \mathbf{y} \right]^{\frac{1}{2}}\\
		= {}& C_1 \left\|p_{\delta}\right\|_{L^{2}(\Omega)} \left[ \int_{\Omega} \left(\int_{\Omega} R_{\delta}(\mathbf{x}, \mathbf{y})\mathrm{d} \mathbf{x} \right) u_\delta^2(\mathbf{y})  \mathrm{d} \mathbf{y} \right]^{\frac{1}{2}}\\
		\leq {} & C \left\|p_{\delta}\right\|_{L^{2}(\Omega)} \left[ \int_{\Omega} u_\delta^2(\mathbf{y}) \mathrm{d} \mathbf{y} \right]^{\frac{1}{2}} = C\left\|p_{\delta}\right\|_{L^{2}(\Omega)}\left\|u_{\delta}\right\|_{L^{2}(\Omega)}
	\end{align*}
	
	The third term
	\begin{align}
		&\int_{\Omega} p_\delta(\mathbf{x}) \int_{\partial \Omega} \frac{\bar{R}_{\delta}(\mathbf{x}, \mathbf{y})}{\bar{\bar{w}}_\delta(\mathbf{y})} \int_{\Omega} \bar{R}_{\delta}(\mathbf{y}, \mathbf{s}) u_\delta(\mathbf{s}) \mathrm{d} \mathbf{s} \mathrm{d} S_{\mathbf{y}} \mathrm{d} \mathbf{x} \nonumber\\
		={}&\int_{\partial \Omega} \frac{1}{\bar{\bar{w}}_\delta(\mathbf{y})} \left(\int_{\Omega} \bar{R}_{\delta}(\mathbf{x}, \mathbf{y}) p_\delta(\mathbf{x}) \mathrm{d} \mathbf{x} \right) \left(\int_{\Omega} \bar{R}_{\delta}(\mathbf{y}, \mathbf{s}) u_\delta(\mathbf{s}) \mathrm{d} \mathbf{s} \right) \mathrm{d} S_{\mathbf{y}} \nonumber\\
		\leq {}& C_1\delta \int_{\partial \Omega} \left(\int_{\Omega} \bar{R}_{\delta}(\mathbf{x}, \mathbf{y}) p_\delta(\mathbf{x}) \mathrm{d} \mathbf{x} \right) \left(\int_{\Omega} \bar{R}_{\delta}(\mathbf{y}, \mathbf{s}) u_\delta(\mathbf{s}) \mathrm{d} \mathbf{s} \right) \mathrm{d} S_{\mathbf{y}} \label{eq:continuous_third_term1} \\
		\leq {}& C_1\delta \left[\int_{\partial \Omega}\left(\int_{\Omega} \bar{R}_{\delta}(\mathbf{x}, \mathbf{y}) p_\delta(\mathbf{x}) \mathrm{d} \mathbf{x} \right)^2 \mathrm{d} S_{\mathbf{y}} \int_{\partial \Omega} \left(\int_{\Omega} \bar{R}_{\delta}(\mathbf{y}, \mathbf{s}) u_\delta(\mathbf{s}) \mathrm{d} \mathbf{s} \right)^2 \mathrm{d} S_{\mathbf{y}} \right]^{\frac{1}{2}} \label{eq:continuous_third_term2}
	\end{align}
	where in \Cref{eq:continuous_third_term1}, we use the estimate in \Cref{prop:estimate}. In \Cref{eq:continuous_third_term2}, we have
	\begin{align*}
		\int_{\partial \Omega}&\left(\int_{\Omega} \bar{R}_{\delta}(\mathbf{x}, \mathbf{y}) p_\delta(\mathbf{x}) \mathrm{d} \mathbf{x} \right)^2 \mathrm{d} S_{\mathbf{y}} \leq \int_{\partial \Omega}\left(\int_{\Omega} \bar{R}_{\delta}(\mathbf{x}, \mathbf{y}) \mathrm{d} \mathbf{x} \right)\left(\int_{\Omega} \bar{R}_{\delta}(\mathbf{x}, \mathbf{y}) p_\delta^2(\mathbf{x}) \mathrm{d} \mathbf{x} \right) \mathrm{d} S_{\mathbf{y}} \\
		\leq {}& C_1 \int_{\partial \Omega}\int_{\Omega} \bar{R}_{\delta}(\mathbf{x}, \mathbf{y}) p_\delta^2(\mathbf{x}) \mathrm{d} \mathbf{x} \mathrm{d} S_{\mathbf{y}} = C_1 \int_{\Omega} \left(\int_{\partial \Omega} \bar{R}_{\delta}(\mathbf{x}, \mathbf{y}) \mathrm{d} S_{\mathbf{y}}\right) p_\delta^2(\mathbf{x})  \mathrm{d} \mathbf{x} \\
		\leq {}& \vphantom{\int}  \frac{C}{\delta} \int_{\Omega} p_\delta^2(\mathbf{x})  \mathrm{d} \mathbf{x} = \frac{C}{\delta} \left\|p_{\delta}\right\|_{L^{2}(\Omega)}^2
	\end{align*}
	
	Similarly
	\[\int_{\partial \Omega} \left(\int_{\Omega} \bar{R}_{\delta}(\mathbf{y}, \mathbf{s}) u_\delta(\mathbf{s}) \mathrm{d} \mathbf{s} \right)^2 \mathrm{d} S_{\mathbf{y}} \leq \frac{C}{\delta} \left\|u_{\delta}\right\|_{L^{2}(\Omega)}^2\]
	
	Thus the third term
	\[\int_{\Omega} p_\delta(\mathbf{x}) \int_{\partial \Omega} \frac{\bar{R}_{\delta}(\mathbf{x}, \mathbf{y})}{\bar{\bar{w}}_\delta(\mathbf{y})} \int_{\Omega} \bar{R}_{\delta}(\mathbf{y}, \mathbf{s}) u_\delta(\mathbf{s}) \mathrm{d} \mathbf{s} \mathrm{d} S_{\mathbf{y}} \mathrm{d} \mathbf{x} \leq C \left\|p_{\delta}\right\|_{L^{2}(\Omega)}\left\|u_{\delta}\right\|_{L^{2}(\Omega)}\]
	
	Combine these three estimates, we prove the continuity of $B$.
\end{proof}

\begin{proposition}[Coercivity]
\label{prop:coercive}
    For any $u_\delta \in L^2(\Omega)$, there exists a constant $C>0$ independent of $\delta$ such that
    \[B[u_\delta, u_\delta]\geq C\|u_\delta\|_{L^2(\Omega)}^2\]
\end{proposition}

The proof of \Cref{prop:coercive} is more involved. We should first introduce two lemmas.

\begin{lemma}
	\label{lemma:r_rbar}
	If $\delta$ is small enough, then for any $u\in L^2(\Omega)$, there exists a constant $C>0$ independent of $\delta$ and $u$, such that
	\[\int_{\Omega} \int_{\Omega} R_{\delta}(\mathbf{x}, \mathbf{y})(u(\mathbf{x})-u(\mathbf{y}))^{2} \mathrm{d} \mathbf{x} \mathrm{d} \mathbf{y}\geq C \int_{\Omega} \int_{\Omega} \bar{R}_{\delta}(\mathbf{x}, \mathbf{y})(u(\mathbf{x})-u(\mathbf{y}))^{2} \mathrm{d} \mathbf{x} \mathrm{d} \mathbf{y} \]
\end{lemma}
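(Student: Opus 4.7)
\textbf{Proof proposal for \Cref{lemma:r_rbar}.} The difficulty is that no pointwise inequality $R_\delta \ge c\, \bar R_\delta$ can hold on the whole support: by nondegeneracy, $R_\delta(\mathbf{x},\mathbf{y}) \ge \gamma_0 \alpha_n \delta^{-n}$ only for $|\mathbf{x}-\mathbf{y}|^2/(4\delta^2) \le 1/2$, while $\bar R_\delta$ can remain positive all the way out to $|\mathbf{x}-\mathbf{y}| = 2\delta$. My plan is therefore to split the domain of integration into a \emph{near} zone where a pointwise comparison works, and a \emph{far} zone where an auxiliary midpoint-ball argument converts $\bar R_\delta$-weighted differences into $R_\delta$-weighted ones.

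First, write
\[
\Omega_{\mathrm{near}} = \{(\mathbf{x},\mathbf{y})\in\Omega\times\Omega:\ |\mathbf{x}-\mathbf{y}|\le \sqrt{2}\,\delta\}, \qquad
\Omega_{\mathrm{far}} = \{(\mathbf{x},\mathbf{y})\in\Omega\times\Omega:\ \sqrt{2}\,\delta< |\mathbf{x}-\mathbf{y}|\le 2\delta\}.
\]
On $\Omega_{\mathrm{near}}$, Assumption~\ref{assumption:kernel}(d) gives $R_\delta(\mathbf{x},\mathbf{y})\ge \gamma_0 \alpha_n \delta^{-n}$, while $\bar R$ being nonincreasing yields $\bar R_\delta(\mathbf{x},\mathbf{y})\le \bar R(0)\alpha_n \delta^{-n}$. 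Hence $\bar R_\delta \le (\bar R(0)/\gamma_0)\, R_\delta$ pointwise on this set, and the contribution of $\Omega_{\mathrm{near}}$ to the right-hand side is immediately dominated by a constant multiple of the left-hand side.

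On $\Omega_{\mathrm{far}}$ the plan is a midpoint chain. For each pair $(\mathbf{x},\mathbf{y})\in\Omega_{\mathrm{far}}$ I would choose a region $B(\mathbf{x},\mathbf{y})\subset\Omega$ sitting near $(\mathbf{x}+\mathbf{y})/2$ of radius $\approx \delta/4$, with (i) $|B(\mathbf{x},\mathbf{y})|\ge c_0\delta^n$ uniformly, and (ii) every $\mathbf{z}\in B(\mathbf{x},\mathbf{y})$ satisfies $|\mathbf{x}-\mathbf{z}|,|\mathbf{y}-\mathbf{z}|\le \tfrac{5}{4}\delta<\sqrt{2}\,\delta$. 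From $(u(\mathbf{x})-u(\mathbf{y}))^2 \le 2(u(\mathbf{x})-u(\mathbf{z}))^2+2(u(\mathbf{z})-u(\mathbf{y}))^2$, averaging over $\mathbf{z}\in B(\mathbf{x},\mathbf{y})$ and using $\bar R_\delta\le \bar R(0)\alpha_n\delta^{-n}$ gives
\[
\int\!\!\int_{\Omega_{\mathrm{far}}} \bar R_\delta(\mathbf{x},\mathbf{y})(u(\mathbf{x})-u(\mathbf{y}))^2 \,\mathrm{d}\mathbf{x}\,\mathrm{d}\mathbf{y}
\ \le\ \frac{C}{\delta^{2n}} \int\!\!\int_{\Omega_{\mathrm{far}}}\!\int_{B(\mathbf{x},\mathbf{y})} (u(\mathbf{x})-u(\mathbf{z}))^2\, \mathrm{d}\mathbf{z}\,\mathrm{d}\mathbf{x}\,\mathrm{d}\mathbf{y} + (\text{symmetric term}).
\]
To finish, I swap the order of integration: for fixed $\mathbf{x}$ and $\mathbf{z}$, property (ii) forces $\mathbf{y}$ to lie within $O(\delta)$ of the reflection $2\mathbf{z}-\mathbf{x}$, a set of measure $O(\delta^n)$; and $|\mathbf{x}-\mathbf{z}|<\sqrt{2}\,\delta$ lets me rewrite $(u(\mathbf{x})-u(\mathbf{z}))^2 \le \gamma_0^{-1}\alpha_n^{-1}\delta^n R_\delta(\mathbf{x},\mathbf{z})(u(\mathbf{x})-u(\mathbf{z}))^2$. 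The three powers of $\delta$ collapse ($\delta^{-2n}\cdot\delta^n\cdot\delta^n=1$) and the far contribution is bounded by $C\int\!\!\int_\Omega R_\delta(\mathbf{x},\mathbf{z})(u(\mathbf{x})-u(\mathbf{z}))^2\,\mathrm{d}\mathbf{x}\,\mathrm{d}\mathbf{z}$.

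The main obstacle is the geometric construction of $B(\mathbf{x},\mathbf{y})$: for non-convex $\Omega$ the midpoint $(\mathbf{x}+\mathbf{y})/2$ need not lie in $\Omega$, so one cannot simply take a ball there. This is where \Cref{assumption:domain} enters — because $\partial\Omega$ is $C^2$, it has positive reach, so for $\delta$ sufficiently small the midpoint is within distance $O(\delta^2)$ of $\Omega$ whenever it sits outside, and one can shift the center along the inward normal by an $O(\delta)$ amount to place a ball of radius $\delta/4$ with a uniform $c_0\delta^n$ fraction of its volume inside $\Omega$, while still preserving (ii) after replacing $5/4$ by a slightly larger constant still below $\sqrt{2}$. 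Everything else in the argument is Fubini and bookkeeping of $\delta$-powers.
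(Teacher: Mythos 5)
Your argument is correct, but it takes a genuinely different (and more self-contained) route than the paper. The paper's proof leans entirely on the cited dilation estimate \cref{lemma:4_32} from \cite{shi2017convergence}: since the dilated kernel $R\bigl(\|\mathbf{x}-\mathbf{y}\|^2/(32\delta^2)\bigr)$ is bounded below by $\gamma_0$ on the \emph{entire} support $\{\|\mathbf{x}-\mathbf{y}\|\le 2\delta\}$ of $\bar R_\delta$, the comparison with $\bar R_\delta$ becomes a one-line pointwise bound after that lemma is invoked; all the real work is outsourced to the reference. You instead prove the needed control directly via a near/far splitting at $\|\mathbf{x}-\mathbf{y}\|=\sqrt{2}\,\delta$ and a midpoint-chaining argument in the far zone --- which is essentially the mechanism behind the proof of \cref{lemma:4_32} itself, so you are in effect reproving the cited lemma rather than citing it. The bookkeeping checks out: on the near zone $\|\mathbf{x}-\mathbf{y}\|^2/(4\delta^2)\le 1/2$ so \cref{assumption:kernel}(d) applies pointwise; on the far zone the powers $\delta^{-2n}\cdot\delta^n\cdot\delta^n$ indeed collapse, and with no shift at all one may simply take $B(\mathbf{x},\mathbf{y})=B\bigl((\mathbf{x}+\mathbf{y})/2,\delta/4\bigr)\cap\Omega$, whose volume is $\ge c_0\delta^n$ for small $\delta$ by the $C^2$ regularity in \cref{assumption:domain} (the midpoint lies at most $O(\delta^2)$ outside $\Omega$), keeping $|\mathbf{x}-\mathbf{z}|\le \tfrac54\delta<\sqrt2\,\delta$ exactly; your proposed $O(\delta)$ inward shift is unnecessary and, if taken too large, would push $|\mathbf{x}-\mathbf{z}|$ past $\sqrt2\,\delta$, but you correctly flag that the constant must stay below $\sqrt2$ and there is room. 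What your version buys is independence from the external reference and explicit visibility of where \cref{assumption:domain} and \cref{assumption:kernel}(d) enter; what the paper's version buys is brevity.
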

\begin{proof}
	See \Cref{subsec:b.1}.
\end{proof}

\begin{lemma}[\cite{shi2017convergence}]
	\label{lemma:gradient}
	For any function $u \in L^{2}(\Omega)$, there exists a constant $C>0$ independent of $\delta$ and $u$, such that
	\[\frac{1}{\delta^2} \int_{\Omega} \int_{\Omega} R_\delta(\mathbf{x}, \mathbf{y})(u(\mathbf{x})-u(\mathbf{y}))^{2} \mathrm{d} \mathbf{x} \mathrm{d} \mathbf{y} \geq C \int_{\Omega}\left\|\nabla \left(\frac{1}{w_{\delta}(\mathbf{x})} \int_{\Omega} R_\delta(\mathbf{x}, \mathbf{y}) u(\mathbf{y}) \mathrm{d} \mathbf{y}\right)\right\|^{2} \mathrm{d} \mathbf{x}\]
	where $w_{\delta}(\mathbf{x})=\int_{\Omega} R_{\delta}(\mathbf{x}, \mathbf{y}) \mathrm{d} \mathbf{y}$
\end{lemma}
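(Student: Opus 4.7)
The plan is to prove the lemma by writing $\nabla u_\delta^\ast$ (where $u_\delta^\ast(\mathbf{x}) := w_\delta(\mathbf{x})^{-1}\int_\Omega R_\delta(\mathbf{x},\mathbf{y})u(\mathbf{y})\mathrm{d}\mathbf{y}$) as a weighted double integral of differences $u(\mathbf{y})-u(\mathbf{z})$, applying Cauchy--Schwarz pointwise, and then using Fubini together with the nondegeneracy in Assumption~\ref{assumption:kernel}(d) to pull the resulting triple integral back to the nonlocal Dirichlet energy on the left-hand side.

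First I would differentiate $u_\delta^\ast$ under the integral sign. A direct product-rule computation yields
\[
\nabla u_\delta^\ast(\mathbf{x})=\frac{1}{w_\delta(\mathbf{x})}\int_\Omega \nabla_{\mathbf{x}} R_\delta(\mathbf{x},\mathbf{y})\bigl(u(\mathbf{y})-u_\delta^\ast(\mathbf{x})\bigr)\mathrm{d}\mathbf{y},
\]
since the $u_\delta^\ast(\mathbf{x})$ subtracted inside cancels the contribution of $\nabla w_\delta(\mathbf{x})$. Replacing $u_\delta^\ast(\mathbf{x})$ by its own defining average $w_\delta(\mathbf{x})^{-1}\int R_\delta(\mathbf{x},\mathbf{z})u(\mathbf{z})\mathrm{d}\mathbf{z}$ converts this into the symmetric form
\[
\nabla u_\delta^\ast(\mathbf{x})=\frac{1}{w_\delta(\mathbf{x})^2}\int_\Omega\!\!\int_\Omega \nabla_{\mathbf{x}} R_\delta(\mathbf{x},\mathbf{y})\,R_\delta(\mathbf{x},\mathbf{z})\bigl(u(\mathbf{y})-u(\mathbf{z})\bigr)\mathrm{d}\mathbf{y}\,\mathrm{d}\mathbf{z}.
\]

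Next, the key kernel bound. Since $R\in C^1([0,1])$ and $\|\mathbf{x}-\mathbf{y}\|\le 2\delta$ on the support, a direct computation gives $\|\nabla_{\mathbf{x}} R_\delta(\mathbf{x},\mathbf{y})\|\le \frac{C}{\delta}\tilde R_\delta(\mathbf{x},\mathbf{y})$, where $\tilde R$ is a smooth, nonnegative, compactly supported majorant of $|R'|$ that still satisfies Assumption~\ref{assumption:kernel}(a)--(d) (one may take a fixed $C^1$ bump dominating both $R$ and $|R'|$ on $[0,1]$). Applying the Cauchy--Schwarz inequality in the $(\mathbf{y},\mathbf{z})$ integral with the weight $\tilde R_\delta(\mathbf{x},\mathbf{y})R_\delta(\mathbf{x},\mathbf{z})$ and using Proposition~\ref{prop:estimate} together with $w_\delta(\mathbf{x})\ge C_1/3$ to absorb the normalizing factor, I obtain the pointwise bound
\[
\|\nabla u_\delta^\ast(\mathbf{x})\|^2\;\le\;\frac{C}{\delta^{2}}\int_\Omega\!\!\int_\Omega \tilde R_\delta(\mathbf{x},\mathbf{y})\,R_\delta(\mathbf{x},\mathbf{z})\bigl(u(\mathbf{y})-u(\mathbf{z})\bigr)^{2}\mathrm{d}\mathbf{y}\,\mathrm{d}\mathbf{z}.
\]

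Integrating in $\mathbf{x}$ and swapping the order of integration reduces matters to controlling the triple integral by $\iint_{\Omega\times\Omega}R_\delta(\mathbf{y},\mathbf{z})(u(\mathbf{y})-u(\mathbf{z}))^2 \mathrm{d}\mathbf{y}\,\mathrm{d}\mathbf{z}$. The inner $\mathbf{x}$-integral $\int_\Omega \tilde R_\delta(\mathbf{x},\mathbf{y})R_\delta(\mathbf{x},\mathbf{z})\mathrm{d}\mathbf{x}$ is bounded by $C\delta^{-n}$ and supported on $\|\mathbf{y}-\mathbf{z}\|\le 4\delta$, i.e.\ essentially a kernel at doubled scale. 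To pass from this doubled-scale kernel back to $R_\delta$, I use an intermediate-point argument: for such $\mathbf{y},\mathbf{z}$, the midpoint $\mathbf{p}$ satisfies $\|\mathbf{y}-\mathbf{p}\|,\|\mathbf{z}-\mathbf{p}\|\le\sqrt{2}\delta$, so $R_\delta(\mathbf{y},\mathbf{p})$ and $R_\delta(\mathbf{p},\mathbf{z})$ are both bounded below by $\gamma_0 C_\delta$ thanks to Assumption~\ref{assumption:kernel}(d). Averaging $(u(\mathbf{y})-u(\mathbf{z}))^2\le 2(u(\mathbf{y})-u(\mathbf{p}))^2+2(u(\mathbf{p})-u(\mathbf{z}))^2$ over $\mathbf{p}$ in a suitable neighborhood, multiplied by $R_\delta(\mathbf{y},\mathbf{p})R_\delta(\mathbf{p},\mathbf{z})$, and applying Fubini gives the claim. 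The main obstacle is this last step near $\partial\Omega$: when $\mathbf{y}$ or $\mathbf{z}$ is within $O(\delta)$ of the boundary the full midpoint neighborhood may not sit inside $\Omega$, and one must replace the midpoint by an off-center point inside a fixed interior cone, whose existence follows from the $C^2$ regularity of $\partial\Omega$ in Assumption~\ref{assumption:domain} for all $\delta$ sufficiently small.
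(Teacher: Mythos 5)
Your overall route is the standard one for this lemma (which the paper itself does not reprove but imports from \cite{shi2017convergence}), and most of it is correct: the identity
\[
\nabla u_\delta^\ast(\mathbf{x})=\frac{1}{w_\delta(\mathbf{x})^2}\int_\Omega\int_\Omega \nabla_{\mathbf{x}}R_\delta(\mathbf{x},\mathbf{y})\,R_\delta(\mathbf{x},\mathbf{z})\bigl(u(\mathbf{y})-u(\mathbf{z})\bigr)\,\mathrm{d}\mathbf{y}\,\mathrm{d}\mathbf{z},
\]
the bound $\|\nabla_{\mathbf{x}}R_\delta\|\le C\delta^{-1}\tilde R_\delta$, Cauchy--Schwarz combined with \cref{prop:estimate}, and Fubini are all fine and reduce the claim to controlling $\delta^{-2}\iint_{\|\mathbf{y}-\mathbf{z}\|\le 4\delta}\delta^{-n}(u(\mathbf{y})-u(\mathbf{z}))^2\,\mathrm{d}\mathbf{y}\,\mathrm{d}\mathbf{z}$ by $\delta^{-2}\iint R_\delta(\mathbf{y},\mathbf{z})(u(\mathbf{y})-u(\mathbf{z}))^2\,\mathrm{d}\mathbf{y}\,\mathrm{d}\mathbf{z}$. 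The gap is in how you close this last comparison. You assert that for $\|\mathbf{y}-\mathbf{z}\|\le 4\delta$ the midpoint $\mathbf{p}$ satisfies $\|\mathbf{y}-\mathbf{p}\|,\|\mathbf{z}-\mathbf{p}\|\le\sqrt{2}\,\delta$; in fact it only satisfies $\|\mathbf{y}-\mathbf{p}\|=\|\mathbf{z}-\mathbf{p}\|\le 2\delta$, while \cref{assumption:kernel}(d) guarantees $R_\delta\ge\gamma_0 C_\delta$ only up to distance $\sqrt{2}\,\delta$ (and $R_\delta$ may even vanish at distance $2\delta$). Hence for pairs with $2\sqrt{2}\,\delta<\|\mathbf{y}-\mathbf{z}\|\le 4\delta$ the weights $R_\delta(\mathbf{y},\mathbf{p})R_\delta(\mathbf{p},\mathbf{z})$ have no positive lower bound and the single-midpoint averaging argument, as written, fails exactly on the range of distances that forced you into this step in the first place.

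The repair is routine but has to be made explicit: either use a chain of two (or more) intermediate points along the segment, each averaged over a ball of radius $c\delta$, so that consecutive points are within the nondegeneracy radius $\sqrt{2}\,\delta$ (and note that the nonconvexity/boundary issue you flag affects whether the whole chain stays inside $\Omega$, not only the averaging neighborhood), or iterate a one-step scale-doubling inequality enough times to reach $4\delta$. Cleaner still, you can simply invoke the paper's \cref{lemma:4_32}: the kernel $R\bigl(\|\mathbf{y}-\mathbf{z}\|^2/(32\delta^2)\bigr)$ is bounded below by $\gamma_0$ precisely for $\|\mathbf{y}-\mathbf{z}\|\le 4\delta$, so the indicator of $\{\|\mathbf{y}-\mathbf{z}\|\le 4\delta\}$ is dominated by $\gamma_0^{-1}R\bigl(\|\mathbf{y}-\mathbf{z}\|^2/(32\delta^2)\bigr)$, and that lemma then converts your doubled-scale quantity into the $R_\delta$-energy; this is exactly the device the paper uses in the appendix proof of \cref{lemma:r_rbar}. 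With that substitution (or a correctly quantified chain argument) your proof is complete and coincides in substance with the argument in \cite{shi2017convergence}.
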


Next we are ready to derive that $B$ is coercive.
\begin{proof}
	\begin{align}
		B[u_\delta, u_\delta]= {}& \frac{1}{\delta^2} \int_{\Omega} u_\delta(\mathbf{x}) \int_{\Omega} R_{\delta}(\mathbf{x}, \mathbf{y})(u_\delta(\mathbf{x})-u_\delta(\mathbf{y})) \mathrm{d} \mathbf{y} \mathrm{d} \mathbf{x} \nonumber\\
		&+\frac{1}{\delta^2} \int_{\Omega} u_\delta(\mathbf{x}) \int_{\partial \Omega} \frac{\bar{R}_{\delta}(\mathbf{x}, \mathbf{y})}{\bar{\bar{w}}_\delta(\mathbf{y})} \int_{\Omega} \bar{R}_{\delta}(\mathbf{y}, \mathbf{s}) u_\delta(\mathbf{s}) \mathrm{d} \mathbf{s} \mathrm{d} S_{\mathbf{y}} \mathrm{d} \mathbf{x} \nonumber\\
		= {} & \frac{1}{2\delta^2} \int_{\Omega} \int_{\Omega} R_{\delta}(\mathbf{x}, \mathbf{y})(u_\delta(\mathbf{x}) - u_\delta(\mathbf{y}))^2 \mathrm{d} \mathbf{x} \mathrm{d} \mathbf{y} \label{eq:coercive_first_term1}\\
		& +\frac{1}{\delta^2} \int_{\partial \Omega} \frac{1}{\bar{\bar{w}}_\delta(\mathbf{x})} \left(\int_{\Omega} \bar{R}_{\delta}(\mathbf{x}, \mathbf{y}) u_\delta(\mathbf{y}) \mathrm{d} \mathbf{y}\right)^2 \mathrm{d} S_{\mathbf{x}} \label{eq:coercive_second_term1}
	\end{align}
	
	Define the smoothed version of $u$
	\begin{equation}
		\hat{u}_\delta(\mathbf{x})= \frac{1}{\bar{w}_\delta(\mathbf{x})}\int_{\Omega} \bar{R}_\delta(\mathbf{x}, \mathbf{y}) u_\delta(\mathbf{y})\mathrm{d} \mathbf{y}
		\label{eq:u_smoothed}
	\end{equation}
	
	Using \Cref{lemma:r_rbar} and \Cref{lemma:gradient}, in which we substitute the kernel function $R$ with $\bar{R}$, the first term in \Cref{eq:coercive_first_term1} can control
	\begin{align*}
		&\frac{1}{2\delta^2}\int_{\Omega} \int_{\Omega} R_{\delta}(\mathbf{x}, \mathbf{y})(u_\delta(\mathbf{x})-u_\delta(\mathbf{y}))^2 \mathrm{d} \mathbf{x} \mathrm{d} \mathbf{y} \geq \frac{C_1}{2\delta^2}\int_{\Omega} \int_{\Omega} \bar{R}_{\delta}(\mathbf{x}, \mathbf{y})(u_\delta(\mathbf{x})-u_\delta(\mathbf{y}))^2 \mathrm{d} \mathbf{x} \mathrm{d} \mathbf{y}\\
		\geq {}& C\int_{\Omega} \left\|\nabla\left( \frac{1}{\bar{w}_\delta(\mathbf{x})}\int_{\Omega} \bar{R}_\delta(\mathbf{x}, \mathbf{y}) u_\delta(\mathbf{y})\mathrm{d} \mathbf{y} \right)\right\|^2 \mathrm{d} \mathbf{x}= C\|\nabla\hat{u}_\delta(\mathbf{x})\|^2_{L^2(\Omega)}  \numberthis \label{eq:coercive_first_term2}
	\end{align*}
	
	and the second term in \Cref{eq:coercive_second_term1} can control
	\begin{align*}
		& \frac{1}{\delta^2}\int_{\partial \Omega} \frac{1}{\bar{\bar{w}}_\delta(\mathbf{x})} \left(\int_{\Omega} \bar{R}_\delta(\mathbf{x}, \mathbf{y}) u_\delta(\mathbf{y})\mathrm{d} \mathbf{y}\right)^2 \mathrm{d} S_\mathbf{x} \\
		= {} & \frac{1}{\delta^2}\int_{\partial \Omega} \frac{\bar{w}_\delta^2(\mathbf{x})}{\bar{\bar{w}}_\delta(\mathbf{x})} \left(\frac{1}{\bar{w}_\delta(\mathbf{x})}\int_{\Omega} \bar{R}_\delta(\mathbf{x}, \mathbf{y}) u_\delta(\mathbf{y})\mathrm{d} \mathbf{y}\right)^2 \mathrm{d} S_\mathbf{x} \\
		= {} & \frac{1}{\delta^2}\int_{\partial \Omega} \frac{\bar{w}_\delta^2(\mathbf{x})}{\bar{\bar{w}}_\delta(\mathbf{x})} \hat{u}_\delta^2(\mathbf{x})\mathrm{d} S_\mathbf{x} \geq \frac{C}{\delta} \|\hat{u}_\delta(\mathbf{x})\|^2_{L^2(\partial\Omega)} \geq C \|\hat{u}_\delta(\mathbf{x})\|^2_{L^2(\partial\Omega)} \numberthis \label{eq:coercive_second_term2} 
	\end{align*}
	
	In addition, the difference between $u_\delta(\mathbf{x})$ and its smoothed version $\hat{u}_\delta(\mathbf{x})$ can be controlled using \Cref{lemma:r_rbar},
	\begin{align*}
		\|u_\delta(\mathbf{x})-\hat{u}_\delta(\mathbf{x})\|^2_{L^2(\Omega)} & = \|\frac{1}{\bar{w}_\delta(\mathbf{x})} \int_{\Omega} \bar{R}_\delta(\mathbf{x}, \mathbf{y})(u_\delta(\mathbf{x})-u_\delta(\mathbf{y})) \mathrm{d} \mathbf{y}\|^2_{L^2(\Omega)}\\
		& =\int_{\Omega} \frac{1}{\bar{w}^2_\delta(\mathbf{x})} \left(\int_{\Omega} \bar{R}_\delta(\mathbf{x}, \mathbf{y})(u_\delta(\mathbf{x})-u_\delta(\mathbf{y})) \mathrm{d} \mathbf{y}\right)^2 \mathrm{d} \mathbf{x}\\
		& \leq \int_{\Omega} \frac{1}{\bar{w}_\delta(\mathbf{x})} \int_{\Omega} \bar{R}_\delta(\mathbf{x}, \mathbf{y})(u_\delta(\mathbf{x})-u_\delta(\mathbf{y}))^2 \mathrm{d} \mathbf{y} \mathrm{d} \mathbf{x} \\
		& \leq C_1\int_{\Omega} \int_{\Omega} \bar{R}_\delta(\mathbf{x}, \mathbf{y})(u_\delta(\mathbf{x})-u_\delta(\mathbf{y}))^2 \mathrm{d} \mathbf{y} \mathrm{d} \mathbf{x} \\
		& \leq C\int_{\Omega} \int_{\Omega} R_\delta(\mathbf{x}, \mathbf{y})(u_\delta(\mathbf{x})-u_\delta(\mathbf{y}))^2 \mathrm{d} \mathbf{y} \mathrm{d} \mathbf{x} \\
		& \leq \frac{C}{\delta^2}\int_{\Omega} \int_{\Omega} R_\delta(\mathbf{x}, \mathbf{y})(u_\delta(\mathbf{x})-u_\delta(\mathbf{y}))^2 \mathrm{d} \mathbf{y} \mathrm{d} \mathbf{x}  \numberthis \label{eq:coercive_difference} 
	\end{align*}
	
	Moreover, the Poincare inequality with boundary (Eq. 6.11.3 in \cite{maz2013sobolev}) gives us,
	\begin{equation}
		\|\nabla\hat{u}_\delta(\mathbf{x})\|^2_{L^2(\Omega)}+\|\hat{u}_\delta(\mathbf{x})\|^2_{L^2(\partial\Omega)}\geq C \|\hat{u}_\delta(\mathbf{x})\|^2_{L^2(\Omega)}
		\label{eq:poincare}
	\end{equation}
	
	Combining \Cref{eq:coercive_first_term2}, \Cref{eq:coercive_second_term2}, \Cref{eq:coercive_difference} and \Cref{eq:poincare}, we can get
	\begin{align*}
		& ~ B[u_\delta, u_\delta] \\
		& = \frac{1}{2\delta^2} \int_{\Omega} \int_{\Omega} R_{\delta}(\mathbf{x}, \mathbf{y})(u_\delta(\mathbf{x}) - u_\delta(\mathbf{y}))^2 \mathrm{d} \mathbf{x} \mathrm{d} \mathbf{y} +\frac{1}{\delta^2} \int_{\partial \Omega} \frac{1}{\bar{\bar{w}}_\delta(\mathbf{x})} \left(\int_{\Omega} \bar{R}_{\delta}(\mathbf{x}, \mathbf{y}) u_\delta(\mathbf{y}) \mathrm{d} \mathbf{y}\right)^2 \mathrm{d} S_{\mathbf{x}}\\
		& \geq \frac{C_1}{\delta^2}\int_{\Omega} \int_{\Omega} R_{\delta}(\mathbf{x}, \mathbf{y})(u_\delta(\mathbf{x})-u_\delta(\mathbf{y}))^2 \mathrm{d} \mathbf{x} \mathrm{d} \mathbf{y} + C_2\|\nabla\hat{u}_\delta(\mathbf{x})\|^2_{L^2(\Omega)}+C_2\|\hat{u}_\delta(\mathbf{x})\|^2_{L^2(\partial\Omega)}\\
		& \vphantom{\int} \geq C_3\|u_\delta(\mathbf{x})-\hat{u}_\delta(\mathbf{x})\|^2_{L^2(\Omega)} + C_3\|\hat{u}_\delta(\mathbf{x})\|^2_{L^2(\Omega)} \geq C\|u_\delta\|_{L^2(\Omega)}^2
	\end{align*}
\end{proof}

\begin{proposition}[Boundedness]
	\label{prop:bounded}
	For any $p_\delta \in L^2(\Omega)$, there exists a constant $C>0$ independent of $\delta$ such that
	\[\langle F, p_\delta \rangle \leq C \|p_\delta\|_{L^2(\Omega)}\]
\end{proposition}
\begin{proof}
	See \Cref{subsec:b.2}.
\end{proof}

Finally, we can prove the first part of \Cref{thm:wellposed}.
\begin{proof}(Proof of \Cref{thm:wellposed}, part 1)
    Combine \Cref{prop:continuous},\Cref{prop:coercive} and \Cref{prop:bounded}, we can prove the existence and uniqueness of $u_\delta\in L^2(\Omega)$ using Lax-Milgram Theorem. It is an easy corollary that $v_\delta \in L^2(\partial \Omega)$ also exists and is unique. Uniqueness is obvious, and as for existence
\begin{align*}
    &\left\|v_\delta \right\|_{L^2(\partial \Omega)}^2 = \int_{\Omega} \left(\frac{1}{2\delta^2\bar{\bar{w}}_\delta(\mathbf{x})} \int_{\Omega} \left(\bar{R}_{\delta}(\mathbf{x}, \mathbf{y}) u_\delta(\mathbf{y}) +\delta^2 \bar{\bar{R}}_{\delta}(\mathbf{x}, \mathbf{y}) f(\mathbf{y}) \right)\mathrm{d} \mathbf{y}\right)^2\mathrm{d} \mathbf{x} \\
    \leq {}& C_1\int_{\Omega} \left(\int_{\Omega} \left(\bar{R}_{\delta}(\mathbf{x}, \mathbf{y}) u_\delta(\mathbf{y}) +\delta^2 \bar{\bar{R}}_{\delta}(\mathbf{x}, \mathbf{y}) f(\mathbf{y}) \right)\mathrm{d} \mathbf{y}\right)^2\mathrm{d} \mathbf{x} \\
    \leq {}& 2C_1\int_{\Omega} \left(\int_{\Omega} \bar{R}_{\delta}(\mathbf{x}, \mathbf{y}) u_\delta(\mathbf{y}) \mathrm{d} \mathbf{y}\right)^2\mathrm{d} \mathbf{x}+2C_1\int_{\Omega} \left(\int_{\Omega} \delta^2 \bar{\bar{R}}_{\delta}(\mathbf{x}, \mathbf{y}) f(\mathbf{y})\mathrm{d} \mathbf{y}\right)^2\mathrm{d} \mathbf{x} \\
    \leq {}& \vphantom{\left(\int_{\Omega} \bar{R}_\delta(\mathbf{x}, \mathbf{y}) \right)^2} C \int_{\Omega}  u_\delta^2(\mathbf{y}) \mathrm{d} \mathbf{y} + C \int_{\Omega}  f^2(\mathbf{y}) \mathrm{d} \mathbf{y} = C\left\|u_\delta \right\|_{L^2(\Omega)}^2 + C \left\|f \right\|_{L^2(\Omega)}^2
\end{align*}

As we have proved $u_\delta \in L^2(\Omega)$ and we assume $f\in L^2(\Omega)$, we can get $v_\delta \in L^2(\partial \Omega)$. Since the equations are nonlocal, a weak solution is automatically a strong solution, satisfying the equation pointwise.
\end{proof}

\subsection{\texorpdfstring{$H^1$ estimation}{H1 estimation}}

\label{subsec:h1_estimation}
Then we will prove the second part of \Cref{thm:wellposed}, i.e. $u_\delta \in H^1(\Omega)$ and we will also derive an upper bound for its $H^1$ norm. First we provide the core inequality in this subsection,

\begin{proposition}
\label{prop:h1_core}
    If $u_\delta$ is the solution to \Cref{eq:model}, then
    \begin{align}
	\frac{1}{2\delta^2}\int_{\Omega} \int_{\Omega} R_{\delta}(\mathbf{x}, \mathbf{y})&(u_\delta(\mathbf{x})-u_\delta(\mathbf{y}))^2 \mathrm{d} \mathbf{x} \mathrm{d} \mathbf{y}+ \frac{1}{2\delta^2}\int_{\partial \Omega} \frac{1}{\bar{\bar{w}}_\delta(\mathbf{x})} \left(\int_{\Omega} \bar{R}_\delta(\mathbf{x}, \mathbf{y}) u_\delta(\mathbf{y})\mathrm{d} \mathbf{y}\right)^2 \mathrm{d} S_\mathbf{x} \nonumber\\
	\leq{} & \vphantom{\left(\int_{\Omega} \bar{R}_\delta(\mathbf{x}, \mathbf{y}) \right)^2} \int_{\Omega} u_\delta(\mathbf{x}) F_{\mathrm{in}}(\mathbf{x}) \mathrm{d} \mathbf{x} + \frac{\delta^2}{2} \int_{\partial \Omega} \frac{F^2_{\mathrm{bd}}(\mathbf{x}) }{\bar{\bar{w}}_\delta(\mathbf{x})} \mathrm{d} S_\mathbf{x} 
	\label{eq:h1_core}
    \end{align}
\end{proposition}

\begin{proof}
    Since we have proved $u_\delta\in L^2(\Omega)$, we may choose $u_\delta$ as test function and get
    \[B[u_\delta,u_\delta]=\langle F, u_\delta \rangle\]
    i.e.
    \begin{align*}
        & \frac{1}{2\delta^2} \int_{\Omega} \int_{\Omega} R_{\delta}(\mathbf{x}, \mathbf{y})(u_\delta(\mathbf{x}) - u_\delta(\mathbf{y}))^2 \mathrm{d} \mathbf{x} \mathrm{d} \mathbf{y} + \frac{1}{\delta^2} \int_{\partial \Omega} \frac{1}{\bar{\bar{w}}_\delta(\mathbf{x})} \left(\int_{\Omega} \bar{R}_{\delta}(\mathbf{x}, \mathbf{y}) u_\delta(\mathbf{y}) \mathrm{d} \mathbf{y}\right)^2 \mathrm{d} S_{\mathbf{x}}\\
        = & \int_{\Omega} u_\delta(\mathbf{x}) F_{\mathrm{in}}(\mathbf{x}) \mathrm{d} \mathbf{x} - \int_{\Omega} u_\delta(\mathbf{x}) \int_{\partial \Omega} \frac{\bar{R}_{\delta}(\mathbf{x}, \mathbf{y})}{\bar{\bar{w}}_\delta(\mathbf{y})} F_{\mathrm{bd}}(\mathbf{y}) \mathrm{d} S_{\mathbf{y}} \mathrm{d} \mathbf{x}
    \end{align*}
	
	We have $ -ab \leq \frac{1}{2}\delta^2 a^2+ \frac{1}{2\delta^2}b^2$, thus
	\begin{align*}
    	& -\int_{\Omega} u_\delta(\mathbf{x}) \int_{\partial \Omega} \frac{\bar{R}_{\delta}(\mathbf{x}, \mathbf{y})}{\bar{\bar{w}}_\delta(\mathbf{y})} F_{\mathrm{bd}}(\mathbf{y}) \mathrm{d} S_{\mathbf{y}} \mathrm{d} \mathbf{x} = -\int_{\partial \Omega} \frac{F_{\mathrm{bd}}(\mathbf{x})}{\bar{\bar{w}}_\delta(\mathbf{x})} \left(\int_{\Omega} \bar{R}_{\delta}(\mathbf{x}, \mathbf{y}) u_\delta(\mathbf{y}) \mathrm{d} \mathbf{y}\right) \mathrm{d} S_\mathbf{x} \\
	    \leq {} & \frac{\delta^2}{2} \int_{\partial \Omega} \frac{F^2_{\mathrm{bd}}(\mathbf{x}) }{\bar{\bar{w}}_\delta(\mathbf{x})} \mathrm{d} S_\mathbf{x} + \frac{1}{2\delta^2}\int_{\partial \Omega} \frac{1}{\bar{\bar{w}}_\delta(\mathbf{x})} \left(\int_{\Omega} \bar{R}_\delta(\mathbf{x}, \mathbf{y}) u_\delta(\mathbf{y})\mathrm{d} \mathbf{y}\right)^2 \mathrm{d} S_\mathbf{x} \numberthis \label{eq:h1_abupper}
	\end{align*}
	Putting \Cref{eq:h1_abupper} into the equality gives desired estimate.
\end{proof}

\Cref{eq:h1_core} is the core inequality we will use in the following proof. We will show that the left hand side of \Cref{eq:h1_core} can control the $L^2$ norm of $u_\delta$ and $\nabla u_\delta$, while the right hand side can be controlled by the $L^2$ norm of $f$. 

First of all, the $L^2$ norm $\|u_\delta\|_{L^2(\Omega)}$ can be controlled by \Cref{prop:coercive} as an easy corollary.

\begin{corollary}
\label{corol:l2_estimation}
    For any $u_\delta \in L^2(\Omega)$, there exists a constant $C>0$, such that
    \begin{align*}
        C\|u_\delta\|_{L^2(\Omega)}^2 \leq \frac{1}{2\delta^2}\int_{\Omega} \int_{\Omega} R_{\delta}(\mathbf{x}, \mathbf{y})&(u_\delta(\mathbf{x})-u_\delta(\mathbf{y}))^2 \mathrm{d} \mathbf{x} \mathrm{d} \mathbf{y} \\
        & + \frac{1}{2\delta^2}\int_{\partial \Omega} \frac{1}{\bar{\bar{w}}_\delta(\mathbf{x})} \left(\int_{\Omega} \bar{R}_\delta(\mathbf{x}, \mathbf{y}) u_\delta(\mathbf{y})\mathrm{d} \mathbf{y}\right)^2 \mathrm{d} S_\mathbf{x} \numberthis \label{eq:h1_l2_estimation}
    \end{align*}
\end{corollary}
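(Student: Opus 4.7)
The plan is to recognize that the right-hand side of \eqref{eq:h1_l2_estimation} is essentially a rescaling of the bilinear form $B[u_\delta, u_\delta]$ that was shown to be coercive in \cref{prop:coercive}. From the decomposition performed in \eqref{eq:coercive_first_term1}--\eqref{eq:coercive_second_term1}, one has the identity
\[
B[u_\delta, u_\delta] = \frac{1}{2\delta^{2}}\int_{\Omega}\int_{\Omega} R_{\delta}(\mathbf{x}, \mathbf{y})(u_\delta(\mathbf{x}) - u_\delta(\mathbf{y}))^2 \mathrm{d}\mathbf{x}\, \mathrm{d}\mathbf{y} + \frac{1}{\delta^{2}}\int_{\partial \Omega} \frac{1}{\bar{\bar{w}}_\delta(\mathbf{x})}\left(\int_{\Omega} \bar{R}_\delta(\mathbf{x}, \mathbf{y}) u_\delta(\mathbf{y}) \mathrm{d}\mathbf{y}\right)^{2} \mathrm{d}\mathbf{x}.
\]
Both summands here are manifestly non-negative, while the right-hand side of \eqref{eq:h1_l2_estimation} keeps the first one unchanged and only replaces the coefficient $\tfrac{1}{\delta^2}$ of the second by $\tfrac{1}{4\delta^2}$.

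It follows immediately that the right-hand side of \eqref{eq:h1_l2_estimation} is bounded below by $\tfrac{1}{4}B[u_\delta, u_\delta]$. Invoking the coercivity estimate $B[u_\delta, u_\delta] \geq C \|u_\delta\|_{L^2(\Omega)}^2$ from \cref{prop:coercive} then yields the claim, with a constant equal to one quarter of the coercivity constant.

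There is no substantive obstacle in this argument, since all the real analytic content (in particular \cref{lemma:r_rbar}, \cref{lemma:gradient}, and Poincar\'e's inequality \eqref{eq:poincare}) has already been absorbed into the proof of \cref{prop:coercive}; what remains is a trivial structural observation about the positivity of the two summands. If a self-contained derivation is preferred, one can simply re-run the chain of estimates \eqref{eq:coercive_first_term2}--\eqref{eq:poincare} with the boundary coefficient $\tfrac{1}{\delta^2}$ replaced by $\tfrac{1}{4\delta^2}$; because Poincar\'e's inequality is insensitive to a positive rescaling of the boundary $L^2$ contribution, the conclusion goes through unchanged.
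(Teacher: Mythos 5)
Your proposal is correct and matches the paper's own argument: both recognize the right-hand side as the coercive bilinear form $B[u_\delta,u_\delta]$ with the boundary term's coefficient reduced from $\tfrac{1}{\delta^2}$ to $\tfrac{1}{4\delta^2}$, and both absorb that factor into the constant via nonnegativity of the two summands before invoking \cref{prop:coercive}. No issues.
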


\begin{proof}
Recall the definition of bilinear form $B$
\begin{align*}
    B[u_\delta, u_\delta]= \frac{1}{2\delta^2} \int_{\Omega} \int_{\Omega} &R_{\delta}(\mathbf{x}, \mathbf{y})(u_\delta(\mathbf{x}) - u_\delta(\mathbf{y}))^2 \mathrm{d} \mathbf{x} \mathrm{d} \mathbf{y} \\
    & +\frac{1}{\delta^2} \int_{\partial \Omega} \frac{1}{\bar{\bar{w}}_\delta(\mathbf{x})} \left(\int_{\Omega} \bar{R}_{\delta}(\mathbf{x}, \mathbf{y}) u_\delta(\mathbf{y}) \mathrm{d} \mathbf{y}\right)^2 \mathrm{d} S_{\mathbf{x}}
\end{align*}
The two terms on the left hand side only differs from current inequality in an additional $\frac{1}{2}$, but it can be absorbed in the constant $C$. Thus using the fact that $B$ is coercive, we can get the upper bound for $\|u_\delta\|_{L^2(\Omega)}^2$.
\end{proof}

Next we can show that $\|\nabla u_\delta\|_{L^2(\Omega)}$ is bounded by the left hand side of \Cref{eq:h1_core} plus some extra terms on $F_{\mathrm{in}}$ and $F_{\mathrm{bd}}$.  
\begin{proposition}
\label{prop:gradient_estimation}
    If $u_\delta \in L^2(\Omega)$ is the solution to \Cref{eq:model}, then there exists a constant $C>0$, such that
    \begin{align*}
        & ~C\|\nabla u_\delta\|_{L^2(\Omega)}^2 \\
        \leq {} & \vphantom{\left(\int_{\Omega} \bar{R}_\delta(\mathbf{x}, \mathbf{y}) \right)^2} \frac{1}{2\delta^2}\int_{\Omega} \int_{\Omega} R_{\delta}(\mathbf{x}, \mathbf{y})(u_\delta(\mathbf{x})-u_\delta(\mathbf{y}))^2 \mathrm{d} \mathbf{x} \mathrm{d} \mathbf{y} + \frac{1}{2\delta^2}\int_{\partial \Omega} \frac{1}{\bar{\bar{w}}_\delta(\mathbf{x})} \left(\int_{\Omega} \bar{R}_\delta(\mathbf{x}, \mathbf{y}) u_\delta(\mathbf{y})\mathrm{d} \mathbf{y}\right)^2 \mathrm{d} S_\mathbf{x}\\
        & \vphantom{\left(\int_{\Omega} \bar{R}_\delta(\mathbf{x}, \mathbf{y}) \right)^2} + \delta^3 \| F_{\mathrm{bd}}(\mathbf{x}) \|_{L^2(\partial \Omega)}^2 + \delta^2 \| F_{\mathrm{in}}(\mathbf{x}) \|_{L^2(\Omega)}^2 + \delta^4 \| \nabla F_{\mathrm{in}}(\mathbf{x}) \|_{L^2(\Omega)}^2  \numberthis \label{eq:h1_gradient_estimation}
    \end{align*}
\end{proposition}

\begin{proof}
See \Cref{subsec:4.8}.
\end{proof}

Finally we are ready to prove the second part of \Cref{thm:wellposed}.
\begin{proof}(Proof of Theorem \ref{thm:wellposed}, part 2)
To complete the proof of \Cref{thm:wellposed}, we derive the bound for the norm of $F_{\mathrm{in}}(\mathbf{x})$ and $F_{\mathrm{bd}}(\mathbf{x})$. Using similar techniques as in the proof of \Cref{prop:continuous}, we have
\[\|F_{\mathrm{in}}(\mathbf{x}) \|_{L^2(\Omega)}^2\leq C \left\|f \right\|_{L^2(\Omega)}^2,\quad \|F_{\mathrm{bd}}(\mathbf{x}) \|_{L^2(\partial \Omega)}^2\leq \frac{C}{\delta}  \left\|f \right\|_{L^2(\Omega)}^2\]
\[\|\nabla F_{\mathrm{in}}(\mathbf{x}) \|_{L^2(\Omega)}^2= \int_{\Omega} \left( \int_{\Omega} R_{\delta}(\mathbf{x}, \mathbf{y}) \frac{\mathbf{x}- \mathbf{y}}{2\delta^2} f(\mathbf{y}) \mathrm{d} \mathbf{y} \right)^2 \mathrm{d} \mathbf{x}\leq \frac{C}{\delta^2} \left\|f \right\|_{L^2(\Omega)}^2\]

Combine these estimates with \Cref{eq:h1_core}, \Cref{eq:h1_l2_estimation} and \Cref{eq:h1_gradient_estimation}, we have
\begingroup
\allowdisplaybreaks
\begin{align*}
    \vphantom{\left(\int_{\Omega} \bar{R}_\delta(\mathbf{x}, \mathbf{y}) \right)^2} & C\left\|u_\delta\right\|_{H^1(\Omega)}^2 = C\left\|u_\delta\right\|_{L^2(\Omega)}^2 + C\left\|\nabla u_\delta\right\|_{L^2(\Omega)}^2 \\
    \leq {} & \frac{1}{2\delta^2}\int_{\Omega} \int_{\Omega} R_{\delta}(\mathbf{x}, \mathbf{y})(u_\delta(\mathbf{x})-u_\delta(\mathbf{y}))^2 \mathrm{d} \mathbf{x} \mathrm{d} \mathbf{y} + \frac{1}{2\delta^2}\int_{\partial \Omega} \frac{1}{\bar{\bar{w}}_\delta(\mathbf{x})} \left(\int_{\Omega} \bar{R}_\delta(\mathbf{x}, \mathbf{y}) u_\delta(\mathbf{y})\mathrm{d} \mathbf{y}\right)^2 \mathrm{d} S_\mathbf{x} \\
	& \vphantom{\left(\int_{\Omega} \bar{R}_\delta(\mathbf{x}, \mathbf{y}) \right)^2} + \delta^3 \| F_{\mathrm{bd}}(\mathbf{x}) \|_{L^2(\partial \Omega)}^2 + \delta^2 \| F_{\mathrm{in}}(\mathbf{x}) \|_{L^2(\Omega)}^2 + \delta^4 \| \nabla F_{\mathrm{in}}(\mathbf{x}) \|_{L^2(\Omega)}^2 \\
	\leq {} & \int_{\Omega} u_\delta(\mathbf{x}) F_{\mathrm{in}}(\mathbf{x}) \mathrm{d} \mathbf{x} + \frac{\delta^2}{2}  \int_{\partial \Omega} \frac{F^2_{\mathrm{bd}}(\mathbf{x}) }{\bar{\bar{w}}_\delta(\mathbf{x})} \mathrm{d} S_\mathbf{x} + \delta^3 \| F_{\mathrm{bd}}(\mathbf{x}) \|_{L^2(\partial \Omega)}^2 \\
	& + \vphantom{\left(\int_{\Omega} \bar{R}_\delta(\mathbf{x}, \mathbf{y}) \right)^2} \delta^2 \| F_{\mathrm{in}}(\mathbf{x}) \|_{L^2(\Omega)}^2 + \delta^4 \| \nabla F_{\mathrm{in}}(\mathbf{x}) \|_{L^2(\Omega)}^2 \\
	\leq {} & \vphantom{\left(\int_{\Omega} \bar{R}_\delta(\mathbf{x}, \mathbf{y}) \right)^2} \|u_\delta\|_{L^2(\Omega)} \| F_{\mathrm{in}}(\mathbf{x}) \|_{L^2(\Omega)}+ C\delta^3 \| F_{\mathrm{bd}}(\mathbf{x}) \|_{L^2(\partial \Omega)}^2 \vphantom{\left(\int_{\Omega} \bar{R}_\delta(\mathbf{x}, \mathbf{y}) \right)^2} + \delta^2 \| F_{\mathrm{in}}(\mathbf{x}) \|_{L^2(\Omega)}^2 + \delta^4 \| \nabla F_{\mathrm{in}}(\mathbf{x}) \|_{L^2(\Omega)}^2 \\
	\leq {} & \vphantom{\left(\int_{\Omega} \bar{R}_\delta(\mathbf{x}, \mathbf{y}) \right)^2} C\|u_\delta\|_{H^1(\Omega)} \left\|f \right\|_{L^2(\Omega)}+ C\delta^2 \left\|f \right\|_{L^2(\Omega)}^2
\end{align*}
\endgroup

This gives us $u_{\delta} \in$ $H^1(\Omega)$ with the estimate
\[\|u_{\delta}\|_{H^1(\Omega)} \leq C \|f\|_{L^2(\Omega)}\]
\end{proof}

\section{\texorpdfstring{Convergence (\Cref{thm:convergence})}{Convergence}}
\label{sec:convergence}

In this section, we will prove that solution $(u_\delta, v_\delta)$ of \Cref{eq:model} converges to the solution of the Poisson equation \Cref{eq:poisson}.

Define
\[e_\delta(\mathbf{x}) := u(\mathbf{x})-u_\delta(\mathbf{x}), \quad \mathbf{x} \in \Omega\]
\[\tilde{e}_\delta(\mathbf{x}) := \frac{\partial u}{\partial \mathbf{n}}(\mathbf{x})-v_\delta(\mathbf{x}), \quad \mathbf{x} \in \partial \Omega\]

Similar to \Cref{prop:h1_core}, we can have an energy estimation inequality concerning the error,
\begin{corollary}
\label{corol:convergence_core}
    If $u_\delta$ is the solution to \Cref{eq:model}, then we have the following inequality for error $e_\delta(\mathbf{x})$,
    \begin{align*}
    	\frac{1}{2\delta^2}\int_{\Omega} \int_{\Omega} R_{\delta}(\mathbf{x}, \mathbf{y})&(e_\delta(\mathbf{x})-e_\delta(\mathbf{y}))^2 \mathrm{d} \mathbf{x} \mathrm{d} \mathbf{y}+ \frac{1}{2\delta^2}\int_{\partial \Omega} \frac{1}{\bar{\bar{w}}_\delta(\mathbf{x})} \left(\int_{\Omega} \bar{R}_\delta(\mathbf{x}, \mathbf{y}) e_\delta(\mathbf{y})\mathrm{d} \mathbf{y}\right)^2 \mathrm{d} S_\mathbf{x} \\
    	\leq {} & \int_{\Omega} e_\delta(\mathbf{x}) r_{\mathrm{in}}(\mathbf{x}) \mathrm{d} \mathbf{x} + \frac{\delta^2}{2} \int_{\partial \Omega} \frac{r^2_{\mathrm{bd}}(\mathbf{x}) }{\bar{\bar{w}}_\delta(\mathbf{x})} \mathrm{d} S_\mathbf{x}
    \end{align*}
where for $\mathbf{x} \in \Omega$,
\[r_{\mathrm{in}}(\mathbf{x})=\frac{1}{\delta^2} \int_{\Omega} R_{\delta}(\mathbf{x}, \mathbf{y})(u(\mathbf{x})-u(\mathbf{y})) \mathrm{d} \mathbf{y}-2 \int_{\partial \Omega} \bar{R}_{\delta}(\mathbf{x}, \mathbf{y}) \frac{\partial u}{\partial \mathbf{n}}(\mathbf{y}) \mathrm{d} S_{\mathbf{y}}-\int_{\Omega} \bar{R}_{\delta}(\mathbf{x}, \mathbf{y}) f(\mathbf{y}) \mathrm{d} \mathbf{y}\]
and for $\mathbf{x} \in \partial \Omega$,
\[r_{\mathrm{bd}}(\mathbf{x})=-\frac{1}{\delta^2} \int_{\Omega} \bar{R}_{\delta}(\mathbf{x}, \mathbf{y})u(\mathbf{y}) \mathrm{d} \mathbf{y}-2 \int_{\partial \Omega} \bar{\bar{R}}_{\delta}(\mathbf{x}, \mathbf{y})  \frac{\partial u}{\partial \mathbf{n}}(\mathbf{x}) \mathrm{d} S_{\mathbf{y}}-\int_{\Omega} \bar{\bar{R}}_{\delta}(\mathbf{x}, \mathbf{y}) f(\mathbf{y}) \mathrm{d} \mathbf{y}\]
\end{corollary}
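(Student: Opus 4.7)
The plan is to observe that the true solution pair $(u, \partial u/\partial \mathbf{n})$ satisfies exactly the same nonlocal system as $(u_\delta, v_\delta)$ except with two residual (truncation) terms added to the right-hand sides. Subtracting the two systems eliminates the data $f$ and produces an equation for the error pair $(e_\delta, \tilde{e}_\delta)$ that has precisely the algebraic form of \cref{eq:model}, but with $r_{\mathrm{in}}$ and $r_{\mathrm{bd}}$ in place of $\mathcal{P}_\delta f$ and $\bar{\mathcal{P}}_\delta f$. Once this reformulation is in place, the proof of \cref{prop:h1_core} applies verbatim to $(e_\delta, \tilde{e}_\delta)$ and yields the desired inequality.

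More concretely, I would first rewrite the definitions of $r_{\mathrm{in}}$ and $r_{\mathrm{bd}}$ as the two identities
\begin{align*}
\mathcal{L}_\delta u(\mathbf{x}) - \mathcal{G}_\delta \tfrac{\partial u}{\partial \mathbf{n}}(\mathbf{x}) &= \mathcal{P}_\delta f(\mathbf{x}) + r_{\mathrm{in}}(\mathbf{x}), \quad \mathbf{x}\in\Omega,\\
\bar{\mathcal{L}}_\delta u(\mathbf{x}) - \bar{\mathcal{G}}_\delta \tfrac{\partial u}{\partial \mathbf{n}}(\mathbf{x}) &= \bar{\mathcal{P}}_\delta f(\mathbf{x}) + r_{\mathrm{bd}}(\mathbf{x}), \quad \mathbf{x}\in\partial\Omega,
\end{align*}
which are just the definitions of $r_{\mathrm{in}}, r_{\mathrm{bd}}$ rearranged (using the homogeneous Dirichlet assumption $u|_{\partial\Omega}=0$ so that $\bar{\mathcal{L}}_\delta u$ reduces to the integral $-\delta^{-2}\int_\Omega \bar R_\delta u\,\mathrm{d}\mathbf{y}$). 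Subtracting the two equations of \cref{eq:model} from these identities gives
\begin{align*}
\mathcal{L}_\delta e_\delta(\mathbf{x}) - \mathcal{G}_\delta \tilde{e}_\delta(\mathbf{x}) &= r_{\mathrm{in}}(\mathbf{x}), \quad \mathbf{x}\in\Omega,\\
\bar{\mathcal{L}}_\delta e_\delta(\mathbf{x}) - \bar{\mathcal{G}}_\delta \tilde{e}_\delta(\mathbf{x}) &= r_{\mathrm{bd}}(\mathbf{x}), \quad \mathbf{x}\in\partial\Omega.
\end{align*}

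Now I would repeat the energy manipulation of \cref{prop:h1_core} step by step on this error system. Multiply the interior equation by $e_\delta(\mathbf{x})$ and integrate over $\Omega$; multiply the boundary equation by $\tilde{e}_\delta(\mathbf{x})$ and integrate over $\partial \Omega$; then take the interior identity minus $2\delta^2$ times the boundary identity. The symmetry of $R_\delta$ turns $\int_\Omega e_\delta \mathcal{L}_\delta e_\delta$ into a symmetric double integral against $(e_\delta(\mathbf{x})-e_\delta(\mathbf{y}))^2$, and the cross terms involving $\mathcal{G}_\delta \tilde e_\delta$ and $\bar R_\delta e_\delta$ cancel exactly as in the original proof (again by symmetry of $\bar R_\delta$). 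Eliminating $\tilde{e}_\delta$ via the analog of \cref{eq:v_eliminate}, namely
\[
\tilde{e}_\delta(\mathbf{x}) = -\tfrac{1}{2\delta^2 \bar{\bar w}_\delta(\mathbf{x})}\Bigl(\int_\Omega \bar R_\delta(\mathbf{x},\mathbf{y}) e_\delta(\mathbf{y})\,\mathrm{d}\mathbf{y} + \delta^2 r_{\mathrm{bd}}(\mathbf{x})\Bigr),
\]
reproduces exactly the identity \cref{eq:h1_addeq_after} with $F_{\mathrm{in}}, F_{\mathrm{bd}}$ replaced by $r_{\mathrm{in}}, r_{\mathrm{bd}}$.

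Finally, applying the same two elementary inequalities used in the original argument, $(a+b)^2 \geq \tfrac12 a^2 - b^2$ to split the boundary quadratic term into a useful lower bound, and $ab \leq \delta^2 a^2 + \tfrac{1}{4\delta^2} b^2$ to absorb the cross term $\int_{\partial\Omega} (r_{\mathrm{bd}}/\bar{\bar w}_\delta)\int_\Omega \bar R_\delta e_\delta$ into the left-hand quadratic, yields exactly the claimed inequality. There is no genuine obstacle here; the only thing to verify carefully is the bookkeeping in the subtraction step, in particular that the homogeneous Dirichlet simplification $b\equiv 0$ lines up the operator $\bar{\mathcal{L}}_\delta$ on both sides in the same form, so that subtracting the true-solution equation from the discrete equation really does produce the error system in the stated shape. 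Once that is checked, the rest is a line-by-line transcription of the proof of \cref{prop:h1_core}.
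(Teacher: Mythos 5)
Your proposal is correct and follows essentially the same route as the paper: form the error system by moving the exact-solution terms to the right-hand side (which is precisely the definition of $r_{\mathrm{in}}$, $r_{\mathrm{bd}}$), test with $e_\delta$ and $\tilde e_\delta$, combine with the $-2\delta^2$ weight, eliminate $\tilde e_\delta$ via the analog of \cref{eq:v_eliminate}, and finish with the same two elementary inequalities as in \cref{prop:h1_core}. The paper's proof is exactly this, stated more tersely ("the remaining part is similar to the proof of \cref{prop:h1_core}"), and your explicit elimination formula for $\tilde e_\delta$ is the correct one.
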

\begin{proof}
In \Cref{eq:model}, move all terms containing $u$ and $\frac{\partial u}{\partial \mathbf{n}}$ to the right hand side, we get
\begin{align}
	\frac{1}{\delta^2} \int_{\Omega} R_{\delta}(\mathbf{x}, \mathbf{y})(e_\delta(\mathbf{x})-e_\delta(\mathbf{y})) \mathrm{d} \mathbf{y}-2 \int_{\partial \Omega} \bar{R}_{\delta}(\mathbf{x}, \mathbf{y}) \tilde{e}_\delta(\mathbf{y}) \mathrm{d} S_{\mathbf{y}} &= r_{\mathrm{in}}(\mathbf{x}), \quad \mathbf{x} \in \Omega \label{eq:convergence_interior} \\
	-\frac{1}{\delta^2} \int_{\Omega} \bar{R}_{\delta}(\mathbf{x}, \mathbf{y})e_\delta(\mathbf{y}) \mathrm{d} \mathbf{y} - 2 \int_{\partial \Omega} \bar{\bar{R}}_{\delta}(\mathbf{x}, \mathbf{y}) \tilde{e}_\delta(\mathbf{x}) \mathrm{d} S_{\mathbf{y}} &= r_{\mathrm{bd}}(\mathbf{x}), \quad \mathbf{x} \in \partial \Omega \label{eq:convergence_boundary}
\end{align}
here $r_{\mathrm{in}}(\mathbf{x})$ and $r_{\mathrm{bd}}(\mathbf{x})$ are interior and boundary truncation error defined for $x\in \Omega$ and $x\in \partial \Omega$ respectively. The remaining part is similar to the proof of \Cref{prop:h1_core}.
\end{proof}

Next corollary, concerning the $H^1$ estimation of $e_\delta$, comes directly from \Cref{corol:l2_estimation} and \Cref{prop:gradient_estimation},
\begin{corollary}
\label{corol:convergence_h1}
    If $u_\delta(\mathbf{x})$ is the solution to \Cref{eq:model}, then the error $e_\delta(\mathbf{x})$ satisfies
    \begin{align*}
    & C \|e_\delta\|_{H^1(\Omega)}^2 \\
    \leq {} & \frac{1}{2\delta^2}\int_{\Omega} \int_{\Omega} R_{\delta}(\mathbf{x}, \mathbf{y})(e_\delta(\mathbf{x})-e_\delta(\mathbf{y}))^2 \mathrm{d} \mathbf{x} \mathrm{d} \mathbf{y} + \frac{1}{2\delta^2}\int_{\partial \Omega} \frac{1}{\bar{\bar{w}}_\delta(\mathbf{x})} \left(\int_{\Omega} \bar{R}_\delta(\mathbf{x}, \mathbf{y}) e_\delta(\mathbf{y})\mathrm{d} \mathbf{y}\right)^2 \mathrm{d} S_\mathbf{x} \\
	& \vphantom{\left(\int_{\Omega} \bar{R}_\delta(\mathbf{x}, \mathbf{y}) \right)^2} + \delta^3 \| r_{\mathrm{bd}}(\mathbf{x}) \|_{L^2(\partial \Omega)}^2 + \delta^2 \| r_{\mathrm{in}}(\mathbf{x}) \|_{L^2(\Omega)}^2 + \delta^4 \| \nabla r_{\mathrm{in}}(\mathbf{x}) \|_{L^2(\Omega)}^2
\end{align*}
where $r_{\mathrm{in}}(\mathbf{x})$ and $r_{\mathrm{bd}}(\mathbf{x})$ are defined as same as in \Cref{corol:convergence_core}.
\end{corollary}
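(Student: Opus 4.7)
The key observation is that the error pair $(e_\delta, \tilde e_\delta)$ satisfies the nonlocal system \cref{eq:convergence_interior}--\cref{eq:convergence_boundary}, which has exactly the same structure as \cref{eq:model} but with the source data $F_{\mathrm{in}}, F_{\mathrm{bd}}$ replaced by the truncation errors $r_{\mathrm{in}}, r_{\mathrm{bd}}$. Consequently, the two estimates already established for the solution $u_\delta$ of \cref{eq:model} should transfer essentially verbatim to $e_\delta$, and the corollary will follow by summing them.

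First I would invoke \cref{corol:l2_estimation}. Reading its statement, the $L^2$ bound it provides is really a direct consequence of coercivity (\cref{prop:coercive}) of the bilinear form $B$, and therefore holds for \emph{any} element of $L^2(\Omega)$, not only for solutions of \cref{eq:model}. Applying it pointwise with $u_\delta$ replaced by $e_\delta$ bounds $C\|e_\delta\|_{L^2(\Omega)}^2$ from above by the first two terms appearing on the right-hand side of the claimed inequality.

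Next I would apply the gradient estimate of \cref{prop:gradient_estimation}. This requires a bit more care because the statement is phrased specifically for the solution of \cref{eq:model} with right-hand sides $F_{\mathrm{in}}$ and $F_{\mathrm{bd}}$. The cleanest route is to re-read its (appendix) proof as an abstract lemma: any pair $(U,V)\in L^2(\Omega)\times L^2(\partial\Omega)$ satisfying a system of the same structural form with source pair $(g_{\mathrm{in}}, g_{\mathrm{bd}})$ obeys the analogous gradient bound with $F_{\mathrm{in}}, F_{\mathrm{bd}}$ replaced by $g_{\mathrm{in}}, g_{\mathrm{bd}}$. Instantiating this abstract version with $(U,V,g_{\mathrm{in}},g_{\mathrm{bd}}) = (e_\delta, \tilde e_\delta, r_{\mathrm{in}}, r_{\mathrm{bd}})$ yields the gradient control with the truncation error terms $\delta^2\|r_{\mathrm{in}}\|_{L^2(\Omega)}^2$, $\delta^4\|\nabla r_{\mathrm{in}}\|_{L^2(\Omega)}^2$ and $\delta^3\|r_{\mathrm{bd}}\|_{L^2(\partial\Omega)}^2$ on the right. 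Adding this gradient inequality to the previous $L^2$ inequality and absorbing constants then delivers the stated $H^1$ bound on $e_\delta$. The principal obstacle, and really the only substantive one, is to verify that this abstract re-reading of \cref{prop:gradient_estimation} is indeed what its appendix proof establishes; once this is confirmed the combination step is purely mechanical.
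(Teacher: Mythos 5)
Your proposal is correct and matches the paper's argument exactly: the paper gives no separate proof, stating only that the corollary ``comes directly from'' \cref{corol:l2_estimation} (which, as you note, is a pure coercivity statement valid for any $L^2$ function) and \cref{prop:gradient_estimation} (whose appendix proof uses only the structural form of the system and thus applies verbatim to \cref{eq:convergence_interior}--\cref{eq:convergence_boundary} with $(F_{\mathrm{in}},F_{\mathrm{bd}})$ replaced by $(r_{\mathrm{in}},r_{\mathrm{bd}})$). You have simply made explicit the verification the paper leaves implicit.
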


The main difference between this section and last section is that, we should derive consistency results considering the truncation errors $r_{\mathrm{in}}(\mathbf{x})$ and $r_{\mathrm{bd}}(\mathbf{x})$. In fact, $r_{\mathrm{in}}(\mathbf{x})$ and $r_{\mathrm{bd}}(\mathbf{x})$ are closely related. For $\mathbf{x} \in \partial \Omega, u(\mathbf{x})=0$, thus
\begin{align*}
	r_{\mathrm{bd}}(\mathbf{x}) &=\frac{1}{\delta^2} \int_{\Omega} \bar{R}_{\delta}(\mathbf{x}, \mathbf{y})(u(\mathbf{x})-u(\mathbf{y})) \mathrm{d} \mathbf{y}-2 \int_{\partial \Omega} \bar{\bar{R}}_{\delta}(\mathbf{x}, \mathbf{y}) \frac{\partial u}{\partial \mathbf{n}}(\mathbf{y}) \mathrm{d} S_{\mathbf{y}} \\
	&\quad -\int_{\Omega} \bar{\bar{R}}_{\delta}(\mathbf{x}, \mathbf{y}) f(\mathbf{y}) \mathrm{d} \mathbf{y} - 2 \int_{\partial \Omega} \bar{\bar{R}}_{\delta}(\mathbf{x}, \mathbf{y})\left(\frac{\partial u}{\partial \mathbf{n}}(\mathbf{x})-\frac{\partial u}{\partial \mathbf{n}}(\mathbf{y}) \right)  \mathrm{d} S_{\mathbf{y}}\\
	& := \bar{r}_{\mathrm{in}}(\mathbf{x})- 2 \int_{\partial \Omega} \bar{\bar{R}}_\delta(\mathbf{x}, \mathbf{y})\left(\frac{\partial u}{\partial \mathbf{n}}(\mathbf{x})-\frac{\partial u}{\partial \mathbf{n}}(\mathbf{y}) \right)  \mathrm{d} S_{\mathbf{y}}
\end{align*}
	
Here $\bar{r}_{\mathrm{in}}(\mathbf{x})$ is defined similarly as $r_{\mathrm{in}}(\mathbf{x})$ with $R$ replaced by $\bar{R}$, $\bar{R}$ replaced by $\bar{\bar{R}}$. The main result is the following theorem~\cite{shi2017convergence} concerning $r_{\mathrm{in}}(\mathbf{x})$, which can be easily extended to $\bar{r}_{\mathrm{in}}(\mathbf{x})$.
\begin{theorem}[\cite{shi2017convergence}]
    \label{thm:consistency}
    Let $u(\mathrm{x})$ be the solution of the problem \Cref{eq:poisson}. Write
    \[r_{\mathrm{in}}(\mathbf{x})=r_{\mathrm{it}}(\mathbf{x})+r_{\mathrm{bl}}(\mathbf{x})\]
    where
    \[r_{\mathrm{bl}}(\mathbf{x})=\int_{\partial \Omega} \bar{R}_\delta(\mathbf{x}, \mathbf{y}) (\mathbf{x}-\mathbf{y}) \cdot \mathbf{b}(\mathbf{y}) \mathrm{d} S_{\mathbf{y}}\]
    here $\mathbf{b}(\mathbf{y})=\sum_{j=1}^{d}n^{j}(\mathbf{y})\cdot \nabla\left(\nabla^{j} u(\mathbf{y})\right)$, $\mathbf{n}(\mathbf{y})=\left(n^{1}(\mathbf{y}), \cdots, n^{d}(\mathbf{y})\right)$ is the out normal vector of $\partial \Omega$ at $\mathbf{y}, \nabla^{j}$ is the $j$ th component of gradient $\nabla$.
    If $u\in H^3({\Omega})$, then there exists constants $C, \delta_{0}$ depending only on $\Omega$, so that for $\delta \leq \delta_0$.
    \[\left\|r_{\mathrm{it}}\right\|_{L^{2}(\Omega)} \leq C \delta\|u\|_{H^{3}(\Omega)}, \quad \left\|\nabla r_{\mathrm{it}} \right\|_{L^{2}(\Omega)} \leq C\|u\|_{H^{3}(\Omega)}\]
\end{theorem}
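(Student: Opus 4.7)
The plan is the standard point-integral-method consistency computation. First, substitute $f = -\Delta u$ into the third term of $r_{\mathrm{in}}$ and apply Green's identity,
\[\int_\Omega \bar{R}_\delta \Delta u \, d\mathbf{y} = \int_{\partial\Omega} \bar{R}_\delta \frac{\partial u}{\partial \mathbf{n}} d\mathbf{y} - \int_\Omega \nabla_{\mathbf{y}} \bar{R}_\delta \cdot \nabla u \, d\mathbf{y}.\]
The crucial kernel identity $\nabla_{\mathbf{y}}\bar{R}_\delta(\mathbf{x},\mathbf{y}) = \frac{\mathbf{x}-\mathbf{y}}{2\delta^2}R_\delta(\mathbf{x},\mathbf{y})$ (which follows from $\bar{R}'(r) = -R(r)$ and the chain rule) converts the interior term into $\frac{1}{2\delta^2}\int_\Omega R_\delta(\mathbf{x},\mathbf{y})(\mathbf{x}-\mathbf{y})\cdot\nabla u(\mathbf{y})\,d\mathbf{y}$. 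One of the two boundary integrals in $r_{\mathrm{in}}$ is thereby cancelled, and the remainder takes the compact form
\[r_{\mathrm{in}}(\mathbf{x}) = \frac{1}{\delta^2}\int_\Omega R_\delta(\mathbf{x},\mathbf{y})\Big[u(\mathbf{x})-u(\mathbf{y}) - \tfrac{1}{2}(\mathbf{x}-\mathbf{y})\cdot\nabla u(\mathbf{y})\Big] d\mathbf{y} - \int_{\partial\Omega}\bar{R}_\delta\frac{\partial u}{\partial\mathbf{n}} d\mathbf{y}.\]

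Next, I would Taylor-expand $u(\mathbf{y})$ and $\nabla u(\mathbf{y})$ about $\mathbf{x}$ to second order, with integral remainder in $D^3 u$. A direct computation shows the Hessian contributions to the bracket cancel exactly, leaving $-\frac{1}{2}(\mathbf{y}-\mathbf{x})\cdot\nabla u(\mathbf{x})$ plus cubic remainders. The first-order linear term would vanish by the radial symmetry of $R_\delta$ if the support of $R_\delta(\mathbf{x},\cdot)$ lay entirely inside $\Omega$; near the boundary it does not, and its non-vanishing part is rewritten via the divergence identity $\int_\Omega \nabla_{\mathbf{y}}\bar{R}_\delta\,d\mathbf{y} = \int_{\partial\Omega}\bar{R}_\delta\mathbf{n}\,d\mathbf{y}$ as a surface integral involving $\bar{R}_\delta\,\mathbf{n}(\mathbf{y})\cdot\nabla u(\mathbf{x})$. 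Comparing $\nabla u(\mathbf{x})$ with $\nabla u(\mathbf{y})$ by a further Taylor step cancels the outstanding $\int_{\partial\Omega}\bar{R}_\delta\,\partial u/\partial\mathbf{n}$ term and leaves exactly $\int_{\partial\Omega}\bar{R}_\delta(\mathbf{x}-\mathbf{y})\cdot\mathbf{b}(\mathbf{y})\,d\mathbf{y} = r_{\mathrm{bl}}(\mathbf{x})$, modulo higher-order residues, where $\mathbf{b}$ is precisely the normal-projected Hessian of $u$.

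What remains in $r_{\mathrm{it}} := r_{\mathrm{in}} - r_{\mathrm{bl}}$ is then a sum of integrals whose kernels carry an extra factor $|\mathbf{x}-\mathbf{y}| = O(\delta)$ (from the cubic Taylor remainder) against $D^3 u(\mathbf{y})$. Cauchy–Schwarz in $\mathbf{y}$, Fubini, and the $O(1)$ moment bound of \cref{prop:estimate} then give $\|r_{\mathrm{it}}\|_{L^2(\Omega)} \le C\delta\|u\|_{H^3(\Omega)}$. For the gradient, $\nabla_{\mathbf{x}}$ applied to the kernel produces a factor of size $O(1/\delta)$ (via the same identity $\nabla_{\mathbf{y}}\bar{R}_\delta = \tfrac{\mathbf{x}-\mathbf{y}}{2\delta^2}R_\delta$, now acting on $\mathbf{x}$), but this is absorbed by the already-present factor $|\mathbf{x}-\mathbf{y}|$, so no $\delta$ is lost and $\|\nabla r_{\mathrm{it}}\|_{L^2(\Omega)} \le C\|u\|_{H^3(\Omega)}$. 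The main obstacle is the boundary-layer bookkeeping in the second paragraph: isolating exactly $(\mathbf{x}-\mathbf{y})\cdot\mathbf{b}(\mathbf{y})$ from the non-cancelled Taylor residues. This requires a tubular-neighbourhood parametrisation of $\partial\Omega$ using the $C^2$ regularity of \cref{assumption:domain}, and careful expansion of $u$ and its first derivatives in the signed-distance variable; once the geometric computation is finished, the final $L^2$-estimates are elementary.
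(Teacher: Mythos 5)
This theorem is imported verbatim from \cite{shi2017convergence} and the paper supplies no proof of its own, so the only benchmark is the cited argument; your sketch reconstructs exactly that argument — substitute $f=-\Delta u$, apply Green's identity together with the kernel identity $\nabla_{\mathbf{y}}\bar{R}_\delta(\mathbf{x},\mathbf{y})=\frac{\mathbf{x}-\mathbf{y}}{2\delta^2}R_\delta(\mathbf{x},\mathbf{y})$, Taylor-expand to second order, and use the divergence theorem to peel off the surface term $\int_{\partial\Omega}\bar{R}_\delta(\mathbf{x}-\mathbf{y})\cdot\mathbf{b}(\mathbf{y})\,\mathrm{d}\mathbf{y}=r_{\mathrm{bl}}$ — and the remaining $L^2$ bookkeeping via Cauchy--Schwarz, Fubini and the moment bounds of \cref{prop:estimate} is carried out correctly. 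One wording quibble: in the gradient estimate a power of $\delta$ \emph{is} lost relative to $\|r_{\mathrm{it}}\|_{L^2(\Omega)}\le C\delta\|u\|_{H^3(\Omega)}$ (the kernel derivative costs $O(1/\delta)$ and only one factor of $|\mathbf{x}-\mathbf{y}|$ is available to absorb it), which is precisely why the stated bound is $C\|u\|_{H^3(\Omega)}$ rather than $C\delta\|u\|_{H^3(\Omega)}$; your final inequality is the correct one.
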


Using trace theorem and \Cref{assumption:kernel} (c) compact support property, $\left\|r_{\mathrm{bl}}\right\|_{L^{2}(\Omega)}$ can be controlled,
\begin{align*}
	\left\|r_{\mathrm{bl}}(\mathbf{x})\right\|_{L^{2}(\Omega)}^2 & =\left\| \int_{\partial \Omega} \bar{R}_\delta(\mathbf{x}, \mathbf{y})(\mathbf{x}-\mathbf{y}) \cdot \mathbf{b}(\mathbf{y}) \mathrm{d} S_{\mathbf{y}} \right\|_{L^{2}(\Omega)}^2 \\
	& \leq C_1\delta^2 \int_{\Omega}\left(\int_{\partial \Omega} \bar{R}_\delta(\mathbf{x}, \mathbf{y}) \mathbf{b}(\mathbf{y}) \mathrm{d} S_{\mathbf{y}} \right)^2 \mathrm{d} {\mathbf{x}}\\
	& \leq C_1\delta^2 \int_{\Omega}\left(\int_{\partial \Omega} \bar{R}_\delta(\mathbf{x}, \mathbf{y}) \mathrm{d} S_{\mathbf{y}} \right)\left(\int_{\partial \Omega} \bar{R}_\delta(\mathbf{x}, \mathbf{y}) \mathbf{b}^2(\mathbf{y}) \mathrm{d} S_{\mathbf{y}} \right) \mathrm{d} {\mathbf{x}} \\
	& \vphantom{\left(\int_{\partial \Omega} \bar{R}_\delta(\mathbf{x}, \mathbf{y}) \mathrm{d} {\mathbf{y}} \right)} \leq C_2 \delta \left\|\mathbf{b}\right\|_{L^{2}(\partial \Omega)}^2 \leq C_3 \delta\left\|\mathbf{b}\right\|_{H^{1}(\Omega)}^2 \leq C \delta \left\|u\right\|_{H^{3}(\Omega)}^2
\end{align*}
and $\left\|\nabla r_{\mathrm{bl}}(\mathbf{x})\right\|_{L^{2}(\Omega)}^2 \leq C \left\|u\right\|_{H^{3}(\Omega)}^2 $ can be derived similarly.

Next we provide a theorem concerning the truncation error on the boundary $r_{\mathrm{bd}}(\mathbf{x})$. 
\begin{theorem}
    \label{thm:boundary_consistency}
    Let $u(\mathrm{x})$ be the solution of the problem \Cref{eq:poisson}. If $u\in H^3({\Omega})$, then there exists constants $C, \delta_{0}$ depending only on $\Omega$, so that for $\delta \leq \delta_0$.
    \[\left\|r_{\mathrm{bd}}(\mathbf{x})\right\|_{L^{2}(\partial \Omega)}\leq C\left\|u\right\|_{H^3(\Omega)}  \]
\end{theorem}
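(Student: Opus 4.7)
The plan is to work from the decomposition already displayed in the excerpt,
\[
r_{\mathrm{bd}}(\mathbf{x}) = \bar{r}_{\mathrm{in}}(\mathbf{x}) - 2\int_{\partial\Omega}\bar{\bar{R}}_\delta(\mathbf{x},\mathbf{y})\Bigl(\tfrac{\partial u}{\partial\mathbf{n}}(\mathbf{x})-\tfrac{\partial u}{\partial\mathbf{n}}(\mathbf{y})\Bigr)\,\mathrm{d}\mathbf{y},
\]
and to estimate the two pieces separately in $L^{2}(\partial\Omega)$. The overall strategy is to import \cref{thm:consistency} for the first piece (paying a trace penalty to move from $L^{2}(\Omega)$ to $L^{2}(\partial\Omega)$) and to handle the second piece directly as a nonlocal seminorm living intrinsically on $\partial\Omega$.

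For the first piece, I would apply \cref{thm:consistency} with $R$ replaced by $\bar{R}$ and $\bar{R}$ replaced by $\bar{\bar{R}}$; since the proof of that theorem relies only on assumptions \cref{assumption:kernel}(a)--(c), which all three kernels satisfy, it yields $\bar{r}_{\mathrm{in}} = \bar{r}_{\mathrm{it}} + \bar{r}_{\mathrm{bl}}$ with $\|\bar{r}_{\mathrm{it}}\|_{L^{2}(\Omega)}\le C\delta\|u\|_{H^{3}}$, $\|\nabla\bar{r}_{\mathrm{it}}\|_{L^{2}(\Omega)}\le C\|u\|_{H^{3}}$, and boundary layer $\bar{r}_{\mathrm{bl}}(\mathbf{x})=\int_{\partial\Omega}\bar{\bar{R}}_\delta(\mathbf{x},\mathbf{y})(\mathbf{x}-\mathbf{y})\cdot\mathbf{b}(\mathbf{y})\,\mathrm{d}\mathbf{y}$. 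The multiplicative trace inequality $\|w\|_{L^{2}(\partial\Omega)}^{2}\le C\|w\|_{L^{2}(\Omega)}\|w\|_{H^{1}(\Omega)}$ then turns the interior estimates into $\|\bar{r}_{\mathrm{it}}\|_{L^{2}(\partial\Omega)}\le C\delta^{1/2}\|u\|_{H^{3}(\Omega)}$, which is already well below the target. For the layer term $\bar{r}_{\mathrm{bl}}$, because both $\mathbf{x}$ and $\mathbf{y}$ lie on $\partial\Omega$ I use the pointwise bound $|\mathbf{x}-\mathbf{y}|\le 2\delta$ on the support of $\bar{\bar{R}}_\delta$ together with Cauchy--Schwarz and $\int_{\partial\Omega}\bar{\bar{R}}_\delta\,\mathrm{d}\mathbf{y}\lesssim \delta^{-1}$ from \cref{prop:estimate}; after switching the order of integration this reduces to $C\|\mathbf{b}\|_{L^{2}(\partial\Omega)}^{2}$, and the trace theorem controls $\|\mathbf{b}\|_{L^{2}(\partial\Omega)}\le C\|u\|_{H^{5/2}(\partial\Omega)}\le C\|u\|_{H^{3}(\Omega)}$.

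For the boundary-only difference term, I would apply Cauchy--Schwarz to pull one factor of $\bar{\bar{R}}_\delta$ out and use $\int_{\partial\Omega}\bar{\bar{R}}_\delta\,\mathrm{d}\mathbf{y}\lesssim \delta^{-1}$, producing
\[
\Bigl\|\!\int_{\partial\Omega}\!\bar{\bar{R}}_\delta\bigl(\tfrac{\partial u}{\partial\mathbf{n}}(\mathbf{x})-\tfrac{\partial u}{\partial\mathbf{n}}(\mathbf{y})\bigr)\mathrm{d}\mathbf{y}\Bigr\|_{L^{2}(\partial\Omega)}^{2} \le \tfrac{C}{\delta}\!\int_{\partial\Omega}\!\int_{\partial\Omega}\!\bar{\bar{R}}_\delta\bigl(\tfrac{\partial u}{\partial\mathbf{n}}(\mathbf{x})-\tfrac{\partial u}{\partial\mathbf{n}}(\mathbf{y})\bigr)^{2}\mathrm{d}\mathbf{y}\,\mathrm{d}\mathbf{x}.
\]
The remaining double integral is a nonlocal Gagliardo seminorm of $\partial_{\mathbf{n}}u$ on the $(n-1)$-dimensional manifold $\partial\Omega$. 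Renormalising via $\widetilde{K}_\delta := \delta\bar{\bar{R}}_\delta$ (which integrates to an $O(1)$ constant on $\partial\Omega$) and invoking the standard nonlocal-to-local Dirichlet bound on $\partial\Omega$ yields the Gagliardo estimate $\int\!\int \widetilde{K}_\delta(\phi(\mathbf{x})-\phi(\mathbf{y}))^{2}\le C\delta^{2}\|\nabla_{\tau}\phi\|_{L^{2}(\partial\Omega)}^{2}$, so the double integral is $\le C\delta\|\nabla_{\tau}\partial_{\mathbf{n}}u\|_{L^{2}(\partial\Omega)}^{2}$. The surplus $\delta$ cancels with the outer $\delta^{-1}$, leaving a $\delta$-independent bound controlled by $\|\partial_{\mathbf{n}}u\|_{H^{1}(\partial\Omega)}^{2}\le C\|u\|_{H^{3}(\Omega)}^{2}$ via the trace theorem ($\nabla u$ has boundary trace in $H^{3/2}(\partial\Omega)\hookrightarrow H^{1}(\partial\Omega)$).

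Collecting the three contributions, $\|r_{\mathrm{bd}}\|_{L^{2}(\partial\Omega)}\le C(\delta^{1/2}+1)\|u\|_{H^{3}(\Omega)}\le C\|u\|_{H^{3}(\Omega)}$. The main obstacle I anticipate is the careful bookkeeping of $\delta$-powers in the second piece: the surface kernel integrates to $\delta^{-1}$ rather than $1$, so the matching between the outer Cauchy--Schwarz and the nonlocal Gagliardo-type bound on $\partial\Omega$ has to be done with the correctly normalised kernel, and one must also justify the Gagliardo estimate itself on a curved submanifold (a localisation-plus-flattening argument using the $C^{2}$ regularity from \cref{assumption:domain} suffices). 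Once this scaling is correctly tracked, everything else reduces to standard trace inequalities and the extension of \cref{thm:consistency} to the shifted kernels, both of which are routine.
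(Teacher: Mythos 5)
Your proposal is correct and follows essentially the same route as the paper: the same decomposition $r_{\mathrm{bd}}=\bar{r}_{\mathrm{it}}+\bar{r}_{\mathrm{bl}}-2\int_{\partial\Omega}\bar{\bar{R}}_\delta(\partial_{\mathbf{n}}u(\mathbf{x})-\partial_{\mathbf{n}}u(\mathbf{y}))\,\mathrm{d}\mathbf{y}$, the trace theorem plus \cref{thm:consistency} for $\bar{r}_{\mathrm{it}}$, the $|\mathbf{x}-\mathbf{y}|\le 2\delta$ plus Cauchy--Schwarz argument for $\bar{r}_{\mathrm{bl}}$, and for the last term the same Cauchy--Schwarz reduction to a nonlocal Gagliardo-type double integral on $\partial\Omega$, which the paper bounds by exactly the localisation-and-flattening (Newton--Leibniz plus change of variables) argument you defer to as standard. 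The only differences are cosmetic: you gain an unneeded extra $\delta^{1/2}$ on $\bar{r}_{\mathrm{it}}$ via the multiplicative trace inequality, and the paper writes out in full the manifold version of the Gagliardo estimate that you cite.
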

\begin{proof}
See \Cref{subsec:5.4}.
\end{proof}

Furthermore, we need a theorem to utilize the special structure of $r_{\mathrm{bl}}(\mathbf{x})$ in \Cref{thm:consistency}.
\begin{theorem}[\cite{li2017point}]
    \label{thm:boundary}
    Let $g\in H^1({\Omega})$
    \[r(\mathbf{x})=\int_{\partial \Omega} \bar{R}_\delta(\mathbf{x}, \mathbf{y}) (\mathbf{x}-\mathbf{y}) \cdot \mathbf{b}(\mathbf{y}) \mathrm{d} S_{\mathbf{y}}\]
    If $\mathbf{b}\in H^1({\Omega})$, then there exists constants $C$ depending only on $\Omega$, so that
    \[\left|\int_{\Omega} g(\mathbf{x}) r(\mathbf{x}) \mathrm{d} {\mathbf{x}}\right| \leq C \delta \left\|\mathbf{b}\right\|_{H^{1}(\Omega)}\left\|g\right\|_{H^{1}(\Omega)}\]
\end{theorem}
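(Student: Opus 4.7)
The plan is to exploit the classical point integral identity
\[\bar{R}_\delta(\mathbf{x},\mathbf{y})(\mathbf{x}-\mathbf{y}) = -2\delta^2 \nabla_\mathbf{x}\bar{\bar{R}}_\delta(\mathbf{x},\mathbf{y}),\]
which follows from the chain rule applied to $\bar{\bar{R}}_\delta(\mathbf{x},\mathbf{y})=C_\delta \bar{\bar{R}}(|\mathbf{x}-\mathbf{y}|^2/(4\delta^2))$ together with $\bar{\bar{R}}'=-\bar{R}$. This identity is the source of the extra $\delta^2$ factor that will beat the $\delta^{-1}$ blow-up inherent in boundary integrals of the rescaled kernel.

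First I apply Fubini to swap the order of integration, so that
\[\int_\Omega g(\mathbf{x}) r(\mathbf{x})\, \mathrm{d}\mathbf{x} = \int_{\partial\Omega} \mathbf{b}(\mathbf{y}) \cdot \left(\int_\Omega g(\mathbf{x}) \bar{R}_\delta(\mathbf{x},\mathbf{y})(\mathbf{x}-\mathbf{y})\, \mathrm{d}\mathbf{x}\right) \mathrm{d}\mathbf{y}.\]
Substituting the identity and integrating by parts in $\mathbf{x}$ (justified by the $C^2$ regularity of $\partial\Omega$ from \cref{assumption:domain} and the smoothness of $\bar{\bar{R}}_\delta(\cdot,\mathbf{y})$, with density to pass from smooth $g$ to $g\in H^1$), one arrives at the decomposition $\int_\Omega g r\, \mathrm{d}\mathbf{x} = I_1 + I_2$ where
\[I_1 = -2\delta^2 \int_{\partial\Omega}\int_{\partial\Omega} g(\mathbf{x})\, \mathbf{b}(\mathbf{y})\cdot \mathbf{n}(\mathbf{x})\, \bar{\bar{R}}_\delta(\mathbf{x},\mathbf{y})\, \mathrm{d}\mathbf{x}\, \mathrm{d}\mathbf{y},\]
\[I_2 = 2\delta^2 \int_{\partial\Omega}\int_\Omega \nabla g(\mathbf{x}) \cdot \mathbf{b}(\mathbf{y})\, \bar{\bar{R}}_\delta(\mathbf{x},\mathbf{y})\, \mathrm{d}\mathbf{x}\, \mathrm{d}\mathbf{y}.\]

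I then bound each piece by Cauchy--Schwarz combined with the kernel estimates in \cref{prop:estimate}. For $I_1$, applying Cauchy--Schwarz on the double boundary integral and using $\int_{\partial\Omega} \bar{\bar{R}}_\delta\, \mathrm{d} s \leq C/\delta$ twice produces $|I_1| \leq C\delta\, \|g\|_{L^2(\partial\Omega)}\|\mathbf{b}\|_{L^2(\partial\Omega)}$. For $I_2$, Cauchy--Schwarz together with $\int_{\partial\Omega}\bar{\bar{R}}_\delta\, \mathrm{d}\mathbf{y} \leq C/\delta$ (with $\mathbf{x}$ fixed) and $\int_\Omega \bar{\bar{R}}_\delta\, \mathrm{d}\mathbf{x} \leq C$ (with $\mathbf{y}$ fixed) yields $|I_2| \leq C\delta^{3/2}\|\nabla g\|_{L^2(\Omega)}\|\mathbf{b}\|_{L^2(\partial\Omega)}$. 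Invoking the trace inequality $\|\cdot\|_{L^2(\partial\Omega)}\leq C\|\cdot\|_{H^1(\Omega)}$ for both $g$ and $\mathbf{b}$, and using $\delta^{3/2}\leq \delta^{1/2}\delta_0^{1/2}\cdot \delta / \delta_0^{1/2}$ (i.e.\ absorbing a power of $\delta$ into the constant for $\delta\leq \delta_0$), both bounds collapse into the desired form $C\delta\, \|g\|_{H^1(\Omega)}\|\mathbf{b}\|_{H^1(\Omega)}$.

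The main obstacle is really just the first step: recognizing that $\bar{R}_\delta(\mathbf{x}-\mathbf{y})$ admits an exact gradient primitive in the ``kernel tower'' $R\to\bar{R}\to\bar{\bar{R}}$, which is exactly what the paper has set up. Once this maneuver is in hand the remainder is a bookkeeping exercise in Cauchy--Schwarz and the rescaled kernel estimates. A minor technicality is that the trace of $g$ on $\partial\Omega$ in $I_1$ and the trace of $\mathbf{b}$ on $\partial\Omega$ in both $I_1$ and $I_2$ require the $H^1(\Omega)$ hypotheses, but this is precisely what is assumed. Note also that the natural boundary-volume term $I_2$ in fact gives a $\delta^{3/2}$ estimate, so the rate $\delta$ in the final bound is driven by the boundary-boundary term $I_1$.
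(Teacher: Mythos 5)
Your argument is correct, and it is worth noting that the paper does not actually prove this statement at all: it is imported verbatim from \cite{li2017point}, so your write-up is a self-contained substitute rather than a variant of an in-paper proof. The three steps all check out: the identity $\bar{R}_\delta(\mathbf{x},\mathbf{y})(\mathbf{x}-\mathbf{y})=-2\delta^2\nabla_{\mathbf{x}}\bar{\bar{R}}_\delta(\mathbf{x},\mathbf{y})$ follows from $\bar{\bar{R}}'=-\bar{R}$ and the rescaling, and $\bar{\bar{R}}_\delta(\cdot,\mathbf{y})\in C^1(\bar\Omega)$ (only continuity of $\bar R$ is needed), so Green's formula against $g\in H^1(\Omega)$ is legitimate after the density argument you indicate; the resulting boundary--boundary term $I_1$ and boundary--volume term $I_2$ are then controlled exactly as you say by Cauchy--Schwarz, the bounds $\int_{\partial\Omega}\bar{\bar{R}}_\delta\,\mathrm{d}\mathbf{y}\le C/\delta$ and $\int_{\Omega}\bar{\bar{R}}_\delta\,\mathrm{d}\mathbf{x}\le C$ from \cref{prop:estimate}, and the trace inequality, giving $|I_1|\le C\delta\|g\|_{H^1(\Omega)}\|\mathbf{b}\|_{H^1(\Omega)}$ and $|I_2|\le C\delta^{3/2}\|\nabla g\|_{L^2(\Omega)}\|\mathbf{b}\|_{H^1(\Omega)}$. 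This route is arguably cleaner than the boundary-layer/parametrization arguments used in the point integral method literature, and it stays entirely within the toolkit this paper has already set up (the kernel tower $R\to\bar R\to\bar{\bar R}$ plus \cref{prop:estimate}). Two small caveats for the record: the constant inevitably depends on the kernel $R$ as well as $\Omega$ (as everywhere else in the paper), and absorbing the $\delta^{3/2}$ from $I_2$ into $C\delta$ uses $\delta\le\delta_0$, which is the same standing restriction under which \cref{prop:estimate} and \cref{thm:consistency} are stated, so nothing is lost; also note that the boundary trace of $\mathbf{b}$ is exactly what the hypothesis $\mathbf{b}\in H^1(\Omega)$ provides, which is how the theorem is invoked later with $\mathbf{b}$ built from second derivatives of $u\in H^3(\Omega)$.
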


Using \Cref{thm:boundary}, we choose $g=e_\delta$, and $\mathbf{b}$ defined as in \Cref{thm:consistency}. Notice that we have proved $u_\delta \in H^1(\Omega)$ in \Cref{thm:wellposed}, so $e_\delta=u-u_\delta$ indeed belongs to $H^1(\Omega)$. Then we get
\[\int_{\Omega} r_{\mathrm{bl}}(\mathbf{x})e_\delta(\mathbf{x}) \mathrm{d} \mathbf{x} \leq C_1\delta \left\|\mathbf{b}\right\|_{H^{1}(\Omega)} \left\|e_\delta\right\|_{H^1(\Omega)}\leq C\delta\left\|u\right\|_{H^{3}(\Omega)} \left\|e_\delta\right\|_{H^1(\Omega)} \]
	
Finally, combine the consistency results \Cref{thm:consistency}, \Cref{thm:boundary_consistency} with \Cref{corol:convergence_core} and \Cref{corol:convergence_h1}, we can prove \Cref{thm:convergence}.
\begin{proof}(Proof of \Cref{thm:convergence})
    \begin{align*}
    & ~\|e_\delta\|_{H^1(\Omega)}^2 \\
    \leq {} & \frac{C}{2\delta^2} \int_{\Omega} \int_{\Omega} R_{\delta}(\mathbf{x}, \mathbf{y})(e_\delta(\mathbf{x})-e_\delta(\mathbf{y}))^2 \mathrm{d} \mathbf{x} \mathrm{d} \mathbf{y} + \frac{C}{2\delta^2}\int_{\partial \Omega} \frac{1}{\bar{\bar{w}}_\delta(\mathbf{x})} \left(\int_{\Omega} \bar{R}_\delta(\mathbf{x}, \mathbf{y}) e_\delta(\mathbf{y})\mathrm{d} \mathbf{y}\right)^2 \mathrm{d} S_\mathbf{x} \\
	& \vphantom{\left(\int_{\Omega} \bar{R}_\delta(\mathbf{x}, \mathbf{y}) \right)^2} + C\delta^3 \| r_{\mathrm{bd}}(\mathbf{x}) \|_{L^2(\partial \Omega)}^2 + C\delta^2 \| r_{\mathrm{in}}(\mathbf{x}) \|_{L^2(\Omega)}^2 + C\delta^4 \| \nabla r_{\mathrm{in}}(\mathbf{x}) \|_{L^2(\Omega)}^2\\
	\leq {} & C\int_{\Omega} e_\delta(\mathbf{x}) r_{\mathrm{it}}(\mathbf{x}) \mathrm{d} \mathbf{x} + C\int_{\Omega} e_\delta(\mathbf{x}) r_{\mathrm{bl}}(\mathbf{x}) \mathrm{d} \mathbf{x} + C \delta^3 \left\|r_{\mathrm{bd}}(\mathbf{x})\right\|_{L^{2}(\partial \Omega)}^2 +  \\
	& \quad  C\delta^2 \| r_{\mathrm{it}}(\mathbf{x}) \|_{L^2(\Omega)}^2 + C\delta^2 \| r_{\mathrm{bl}}(\mathbf{x}) \|_{L^2(\Omega)}^2 + C\delta^4 \| \nabla r_{\mathrm{it}}(\mathbf{x}) \|_{L^2(\Omega)}^2+ C\delta^4 \| \nabla r_{\mathrm{bl}}(\mathbf{x}) \|_{L^2(\Omega)}^2\\
	\leq {}& \vphantom{\int} C\|e_\delta\|_{L^2(\Omega)} \|r_{\mathrm{it}}(\mathbf{x})\|_{L^2(\Omega)}+C\delta\|e_\delta\|_{H^1(\Omega)}\|u\|_{H^{3}(\Omega)} \vphantom{\int} + C\delta^3\|u\|_{H^{3}(\Omega)}^2 + C\delta^4\|u\|_{H^{3}(\Omega)}^2 \\
	\leq {} & \vphantom{\int} C\delta\|e_\delta\|_{H^1(\Omega)} \|u\|_{H^{3}(\Omega)}+ C\delta^3 \|u\|_{H^{3}(\Omega)}^2
\end{align*}

This gives us
\[\left\|e_\delta\right\|_{H^1(\Omega)}\leq C\delta\left\|u\right\|_{H^{3}(\Omega)} \]

The convergence rate of $v_\delta$ is an easy corollary. Multiply \eqref{eq:convergence_interior} by $e_\delta(\mathbf{x})$ and integrate over $\Omega$, multiply \eqref{eq:convergence_boundary} by $\tilde{e}_\delta(\mathbf{x})$ and integrate over $\partial \Omega$, then add two resulting equations. We get
\begin{align*}
    \frac{1}{2\delta^2}\int_{\Omega} \int_{\Omega} R_{\delta}(\mathbf{x}, \mathbf{y})& (e_\delta(\mathbf{x})-e_\delta(\mathbf{y}))^2 \mathrm{d} \mathbf{x} \mathrm{d} \mathbf{y} + 4\delta^2 \int_{\partial \Omega}\int_{\partial \Omega} \bar{\bar{R}}_\delta(\mathbf{x}, \mathbf{y}) \tilde{e}_\delta(\mathbf{x})^2 \mathrm{d} S_\mathbf{x} \mathrm{d} S_{\mathbf{y}} \\
    &=\int_{\Omega}e_\delta(\mathbf{x})r_{\mathrm{in}}(\mathbf{x}) \mathrm{d} \mathbf{x}-2\delta^2\int_{\partial \Omega}\tilde{e}_\delta(\mathbf{x})r_{\mathrm{bd}}(\mathbf{x}) \mathrm{d} S_\mathbf{x} \numberthis \label{eq:convergence_addeq}
\end{align*}

We simply put the following term in \Cref{eq:convergence_addeq} to zero
\[\frac{1}{2\delta^2}\int_{\Omega} \int_{\Omega} R_{\delta}(\mathbf{x}, \mathbf{y})(e_\delta(\mathbf{x})-e_\delta(\mathbf{y}))^2 \mathrm{d} \mathbf{x} \mathrm{d} \mathbf{y} \geq 0 \]

Then
\begin{align*}
	& C\delta \|\tilde{e}_\delta\|_{L^2(\partial \Omega)} ^2\leq 4\delta^2 \int_{\partial \Omega}\int_{\partial \Omega} \bar{\bar{R}}_\delta(\mathbf{x}, \mathbf{y}) \tilde{e}_\delta(\mathbf{x})^2 \mathrm{d} S_\mathbf{x} \mathrm{d} S_{\mathbf{y}} \\
	&\leq \int_{\Omega}e_\delta(\mathbf{x})r_{\mathrm{in}}(\mathbf{x}) \mathrm{d} \mathbf{x}-2\delta^2\int_{\partial \Omega}\tilde{e}_\delta(\mathbf{x})r_{\mathrm{bd}}(\mathbf{x}) \mathrm{d} S_\mathbf{x} \\
	& \leq \left|\int_{\Omega} e_\delta(\mathbf{x}) r_{\mathrm{it}}(\mathbf{x}) \mathrm{d} \mathbf{x} \right|+ \left|\int_{\Omega} e_\delta(\mathbf{x}) r_{\mathrm{bl}}(\mathbf{x}) \mathrm{d} \mathbf{x} \right|+ 2 \delta^2 \left\|r_{\mathrm{bd}}(\mathbf{x})\right\|_{L^{2}(\partial \Omega)}\left\|\tilde{e}_\delta\right\|_{L^{2}(\partial \Omega)} \\
	& \vphantom{\int} \leq C\|e_\delta\|_{L^2(\Omega)} \|r_{\mathrm{it}}(\mathbf{x})\|_{L^2(\Omega)}+C\delta\|e_\delta\|_{H^1(\Omega)}\|u\|_{H^{3}(\Omega)} + 2 \delta^2 \left\|r_{\mathrm{bd}}(\mathbf{x})\right\|_{L^{2}(\partial \Omega)}\left\|\tilde{e}_\delta\right\|_{L^{2}(\partial \Omega)} \\
	& \vphantom{\int} \leq C \delta^2 \|u\|_{H^{3}(\Omega)}^2 + C \delta^2 \|u\|_{H^{3}(\Omega)}\left\|\tilde{e}_\delta\right\|_{L^{2}(\partial \Omega)}
\end{align*}
where in the last inequality, we use the first order $H^1$ convergence that we have proved. Thus
\[\left\|\tilde{e}_\delta\right\|_{L^{2}(\partial \Omega)} \leq C \delta^{\frac{1}{2}} \|u\|_{H^{3}(\Omega)} \]
The result follows from the elliptic regularity of Poisson's equation, $f\in H^1(\Omega)$ implies $u\in H^3(\Omega)$, with the energy estimate.
\end{proof}

\section{Robin boundary condition}
\label{sec:robin}

In this section, we analyze the extension to Robin boundary condition. The differential equation now becomes
\begin{equation}
	\left\{\begin{aligned}
		-\Delta u(\mathbf{x}) &=f(\mathbf{x}), & & \mathbf{x} \in \Omega \\[3pt]
		u(\mathbf{x})+\mu \frac{\partial u}{\partial \mathbf{n}}&(\mathbf{x})  =0, & & \mathbf{x} \in \partial \Omega
	\end{aligned}\right.
	\label{eq:poisson_robin}
\end{equation}
 where $\mu>0$ is a given constant. We propose to change our model as
\[\frac{1}{\delta^2} \int_{\Omega} R_{\delta}(\mathbf{x}, \mathbf{y})(u_\delta(\mathbf{x})-u_\delta(\mathbf{y})) \mathrm{d} \mathbf{y}-2 \int_{\partial \Omega} \bar{R}_{\delta}(\mathbf{x}, \mathbf{y}) v_\delta(\mathbf{y}) \mathrm{d} S_{\mathbf{y}}= \int_{\Omega}  \bar{R}_{\delta}(\mathbf{x}, \mathbf{y}) f(\mathbf{y})\mathrm{d} \mathbf{y}\]
for $\mathbf{x}\in \Omega$, and
\[-\frac{1}{\delta^2} \int_{\Omega} \bar{R}_{\delta}(\mathbf{x}, \mathbf{y}) (\mu v_\delta(\mathbf{x})+u_\delta(\mathbf{y})) \mathrm{d} \mathbf{y}-2 \int_{\partial \Omega} \bar{\bar{R}}_{\delta}(\mathbf{x}, \mathbf{y}) v_\delta(\mathbf{x}) \mathrm{d} S_{\mathbf{y}}= \int_{\Omega}  \bar{\bar{R}}_{\delta}(\mathbf{x}, \mathbf{y}) f(\mathbf{y})\mathrm{d} \mathbf{y}\]
for $\mathbf{x}\in \partial\Omega$. We can similarly write the model as
\begin{equation}
	\left\{\begin{aligned}
		\mathcal{L}_{\delta} u_{\delta}(\mathbf{x})-\mathcal{G}_{\delta} v_{\delta}(\mathbf{x})&=F_{\mathrm{in}}(\mathbf{x}), && \mathbf{x} \in \Omega \\[5pt] \bar{\mathcal{L}}_{\delta} u_{\delta}(\mathbf{x})-\bar{\mathcal{G}}_{\delta} v_{\delta}(\mathbf{x}) &=F_{\mathrm{bd}}(\mathbf{x}), && \mathbf{x} \in \partial \Omega
	\end{aligned}\right.
	\label{eq:model_robin}
\end{equation}
where the only difference from the original model~\Cref{eq:model} with Dirichlet boundary is the definition of $\bar{\mathcal{L}}_{\delta}$,
\[\bar{\mathcal{L}}_{\delta} u_{\delta}(\mathbf{x})= - \frac{1}{\delta^{2}} \int_{\Omega} \bar{R}_{\delta}(\mathbf{x}, \mathbf{y}) u_{\delta}(\mathbf{y}) \mathrm{d} {\mathbf{y}} \quad \longrightarrow \quad -\frac{1}{\delta^2} \int_{\Omega} \bar{R}_{\delta}(\mathbf{x}, \mathbf{y}) (\mu v_\delta(\mathbf{x})+u_\delta(\mathbf{y})) \mathrm{d} \mathbf{y}\]

Then we can give similar theorems as for Dirichlet boundary condition.
\begin{theorem}[Well-Posedness (Robin)]
    \label{thm:wellposed_robin}
    For fixed $\delta>0$ and $f \in L^2(\Omega)$, there exists a unique solution $u_{\delta} \in$ $L^{2}(\Omega), v_{\delta} \in L^{2}(\partial \Omega)$ to the integral model \Cref{eq:model_robin}.
    
    Moreover, we have $u_{\delta} \in$ $H^1(\Omega)$ and the following estimate, with constant $C>0$ independent of $\delta$,
    \[\left\|u_{\delta}\right\|_{H^1(\Omega)} \leq C \left\|f\right\|_{L^2(\Omega)}\]
\end{theorem}
\begin{proof}
    Similar to former proof for \Cref{thm:wellposed}, we expand \eqref{eq:model_robin} and get
    \[v_\delta(\mathbf{x})=-\frac{1}{2\delta^2\hat{w}_\delta(\mathbf{x})} \int_{\Omega} \bar{R}_{\delta}(\mathbf{x}, \mathbf{y}) u_\delta(\mathbf{y}) \mathrm{d} \mathbf{y}-\frac{F_{\mathrm{bd}}(\mathbf{x})}{2\hat{w}_\delta(\mathbf{x})}, \quad \mathbf{x}\in \partial\Omega\] 
    
    where
    \[\hat{w}_\delta(\mathbf{x})=\bar{\bar{w}}_\delta(\mathbf{x})+\frac{\mu}{2\delta^2}\bar{w}_\delta(\mathbf{x})\]
    
    Then in the proof of well-posedness, we only have to replace all $\bar{\bar{w}}_\delta(\mathbf{x})$ with $\hat{w}_\delta(\mathbf{x})$. It is easy to check that all conclusions still hold.
\end{proof}

\begin{theorem}[Convergence (Robin)]
    \label{thm:convergence_robin}
    Let $f \in H^{1}(\Omega)$, $u$ be the solution to the Poisson model \Cref{eq:poisson_robin}, and $\left(u_{\delta}, v_{\delta}\right)$ be the solution to the integral model \Cref{eq:model_robin}, then we have the following estimate, with constant $C>0$ independent of $\delta$,
    \[\left\|u-u_{\delta}\right\|_{H^{1}(\Omega)}\leq C \delta\|f\|_{H^{1}(\Omega)}\]
    \[\left\|\frac{\partial u}{\partial \mathbf{n}}-v_{\delta}\right\|_{L^{2}(\partial \Omega)} \leq C \delta\|f\|_{H^{1}(\Omega)}\]
\end{theorem}

\begin{proof}
Note that the order of convergence on the boundary becomes $O(\delta)$, which is different from $O(\delta^{\frac{1}{2}})$ in \Cref{thm:convergence}. This is because the dominating term in $\hat{w}_\delta(\mathbf{x})$ becomes $\frac{\mu}{2\delta^2}\bar{w}_\delta(\mathbf{x})$, which is $O(\delta^{-2})$, rather than $\bar{\bar{w}}_\delta(\mathbf{x})$, which is $O(\delta^{-1})$, we need to modify several inequalities in the proof for Dirichlet boundary (\Cref{thm:convergence}). The detailed analysis is provided in \Cref{subsec:6.2}.
\end{proof}

\section{Laplacian spectra}
\label{sec:spectra}

Our convergence results for the Poisson equation can be applied to the analysis of Laplacian spectra. Specifically, we want to solve the following equation
\begin{equation}
	\left\{\begin{aligned}
		-\Delta u(\mathbf{x}) &= \lambda u(\mathbf{x}), & & \mathbf{x} \in \Omega \\[3pt]
		u(\mathbf{x}) & = 0, & & \mathbf{x} \in \partial \Omega
	\end{aligned}\right.
	\label{eq:spectra}
\end{equation}
to get the eigenvalues $\lambda$ of Laplacian operator $\Delta$. We discretize the Laplacian operator in a similar way as in \Cref{eq:model}, i.e.
\begin{equation}
	\left\{\begin{aligned}
		\mathcal{L}_{\delta} u_{\delta}(\mathbf{x})-\mathcal{G}_{\delta} v_{\delta}(\mathbf{x})&=\lambda F_{\mathrm{in}}(\mathbf{x}), && \mathbf{x} \in \Omega \\[5pt] \bar{\mathcal{L}}_{\delta} u_{\delta}(\mathbf{x})-\bar{\mathcal{G}}_{\delta} v_{\delta}(\mathbf{x}) &= 0, && \mathbf{x} \in \partial \Omega
	\end{aligned}\right.
	\label{eq:spectra_model}
\end{equation}
where the operators are defined as in \Cref{eq:model}. 
\begin{remark}
    Notice that in the second equation of \Cref{eq:spectra_model}, we set the right hand side to zero, which is different from \Cref{eq:model}. The reason is that we want symmetry in the following proof. Moreover, we will show that changing this term will not affect the convergence rate.
\end{remark}


We start by defining two operators
\begin{itemize}
    \item $T: L^{2}(\Omega) \rightarrow H^{2}(\Omega)$ is the solution operator of the following differential equation, i.e., $u=T(f)$ solves:
\end{itemize}
    \[\left\{\begin{aligned}
    - \Delta u(\mathbf{x}) &=f(\mathbf{x}) & & \mathbf{x} \in \Omega \\[5pt]
    u(\mathbf{x}) &=0, & & \mathbf{x} \in \partial \Omega
    \end{aligned}\right.\]
\begin{itemize}
    \item $T_\delta: L^{2}(\Omega) \rightarrow L^{2}(\Omega)$ is the solution operator of following integral equation, i.e. $u_\delta=T_\delta(f)$ solves
\end{itemize}
\begin{align*}
	\frac{1}{\delta^2} \int_{\Omega} R_{\delta}(\mathbf{x}, \mathbf{y})(u_\delta(\mathbf{x})-u_\delta(\mathbf{y})) \mathrm{d} \mathbf{y}-2 \int_{\partial \Omega} \bar{R}_{\delta}(\mathbf{x}, \mathbf{y}) v_\delta(\mathbf{y}) \mathrm{d} S_{\mathbf{y}} & = \int_{\Omega} \bar{R}_{\delta}(\mathbf{x}, \mathbf{y}) f(\mathbf{y}) \mathrm{d} {\mathbf{y}}, \quad \mathbf{x} \in \Omega\\
	-\frac{1}{\delta^2} \int_{\Omega} \bar{R}_{\delta}(\mathbf{x}, \mathbf{y})u_\delta(\mathbf{y}) \mathrm{d} \mathbf{y}-2 \int_{\partial \Omega} \bar{\bar{R}}_{\delta}(\mathbf{x}, \mathbf{y}) v_\delta(\mathbf{x}) \mathrm{d} S_{\mathbf{y}} & = 0, \qquad \mathbf{x} \in \partial \Omega
\end{align*}

It is easy to see that the equations $Tu=\lambda u$ and $T_\delta u_\delta=\lambda u_\delta$ is equivalent to the eigen problems \Cref{eq:spectra} and \Cref{eq:spectra_model} respectively. Namely their eigenvalues are reciprocal to each other and they share the same eigenspaces. The advantage of using the solution operators is that they are compact operators.

\begin{proposition}
\label{prop:operators}
    For any $\delta>0, T, T_\delta$ are compact operators from $H^{1}(\Omega)$ into $H^{1}(\Omega)$. All eigenvalues of $T, T_\delta$ are real numbers. All generalized eigenvectors of $T, T_\delta$ are eigenvectors.
\end{proposition}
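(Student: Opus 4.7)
The plan is to establish the compactness statement by combining the a priori estimates already available with the Rellich--Kondrachov theorem, and then to extract the spectral information by exhibiting $T_\delta$ as a self-adjoint compact operator with respect to an inner product on $L^2(\Omega)$ equivalent to the standard one.

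For the compactness, I would handle $T$ and $T_\delta$ in parallel. On the $C^2$ domain $\Omega$, classical elliptic regularity gives $T \colon L^2(\Omega) \to H^2(\Omega)\cap H^1_0(\Omega)$ as a bounded linear map, so after composing with the compact Rellich embedding $H^2 \hookrightarrow H^1$ one obtains that $T \colon H^1(\Omega) \to H^1(\Omega)$ is compact. For $T_\delta$, \cref{thm:wellposed} already supplies $\|T_\delta f\|_{H^1(\Omega)} \le C \|f\|_{L^2(\Omega)}$, so $T_\delta \colon L^2(\Omega) \to H^1(\Omega)$ is bounded. Given any bounded sequence $\{u_n\}\subset H^1(\Omega)$, the Rellich embedding $H^1\hookrightarrow L^2$ extracts a subsequence convergent in $L^2(\Omega)$, and applying the bounded map $T_\delta$ to that subsequence produces an $H^1(\Omega)$-convergent subsequence of $\{T_\delta u_n\}$.

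For the reality of the eigenvalues and the equality of generalized with ordinary eigenvectors, the key observation is that the bilinear form $B$ from the proof of \cref{prop:continuous} is symmetric. The volume term is symmetric by the usual symmetrization identity using $R_\delta(\mathbf{x},\mathbf{y}) = R_\delta(\mathbf{y},\mathbf{x})$, and the boundary contribution is manifestly symmetric once rewritten in the factored form
\[
  \int_{\partial\Omega} \frac{1}{\bar{\bar{w}}_\delta(\mathbf{y})}
  \Bigl(\int_{\Omega} \bar{R}_\delta(\mathbf{x},\mathbf{y}) p_\delta(\mathbf{x})\, \mathrm{d}\mathbf{x}\Bigr)
  \Bigl(\int_{\Omega} \bar{R}_\delta(\mathbf{y},\mathbf{s}) u_\delta(\mathbf{s})\, \mathrm{d}\mathbf{s}\Bigr)
  \mathrm{d}\mathbf{y}.
\]
Continuity (\cref{prop:continuous}) and coercivity (\cref{prop:coercive}) of $B$ imply that $(u,v)_B := B[u,v]$ is an inner product on $L^2(\Omega)$ equivalent to the standard one, so $(L^2(\Omega),(\cdot,\cdot)_B)$ is a Hilbert space in which $T_\delta$ is still compact.

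It remains to verify that $T_\delta$ is self-adjoint with respect to $(\cdot,\cdot)_B$. The defining identity $B[T_\delta f, p] = (\mathcal{P}_\delta f, p)_{L^2}$ together with the symmetry of $\mathcal{P}_\delta$ on $L^2(\Omega)$ (again a consequence of $\bar{R}_\delta(\mathbf{x},\mathbf{y}) = \bar{R}_\delta(\mathbf{y},\mathbf{x})$) yields
\[
  (T_\delta f, g)_B = (\mathcal{P}_\delta f, g)_{L^2}
  = (f, \mathcal{P}_\delta g)_{L^2}
  = B[T_\delta g, f]
  = B[f, T_\delta g]
  = (f, T_\delta g)_B,
\]
so the spectral theorem for compact self-adjoint operators delivers real spectrum and diagonalizability, which is exactly the absence of nontrivial generalized eigenvectors. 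The analogous statement for $T$ is the classical consequence of the $L^2$-self-adjointness of $-\Delta$ with homogeneous Dirichlet data. The main point that needs care is the symmetry verification for $B$ and the observation that every eigenvector for a nonzero eigenvalue of $T_\delta$ automatically lies in $H^1(\Omega)$ since $T_\delta$ maps $L^2(\Omega)$ into $H^1(\Omega)$; the compactness itself is routine given the estimates already established.
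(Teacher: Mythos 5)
Your proposal is correct, but it reaches the spectral conclusions by a genuinely different route than the paper. For compactness of $T_\delta$ the paper argues that the kernel regularity in \cref{assumption:kernel}(a) forces $T_\delta u_\delta\in C^1$ and infers compactness from this smoothing; you instead factor $T_\delta$ as the bounded map $L^2(\Omega)\to H^1(\Omega)$ supplied by \cref{thm:wellposed} composed with the compact Rellich embedding $H^1(\Omega)\hookrightarrow L^2(\Omega)$, which is cleaner and avoids any appeal to uniform $C^1$ bounds. For the spectral statements the paper works by hand: it multiplies the eliminated equation by the complex conjugate $u_\delta^*$ and uses kernel symmetry to show the relevant quadratic forms are real (reality of $\lambda$), and then runs an explicit induction on $p_\delta=(T_\delta-\lambda)^{m-1}u_\delta$, killing $p_\delta$ via the coercivity of \cref{prop:coercive}. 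You package exactly the same two ingredients --- symmetry of $B$ (which indeed holds, and which is the reason the paper zeroes the boundary right-hand side in \cref{eq:spectra_model}, so that $B[T_\delta f,p]=(\mathcal{P}_\delta f,p)_{L^2}$ holds without a correction term) and coercivity/continuity of $B$ --- into the statement that $(\cdot,\cdot)_B$ is an equivalent inner product on $L^2(\Omega)$ with respect to which $T_\delta$ is compact and self-adjoint, and then invoke the spectral theorem. Your abstraction buys both conclusions at once and makes transparent why they hold; the paper's computation is more elementary and self-contained. The only point requiring the care you already flag is the transfer between $L^2$ and $H^1$: since $T_\delta$ maps $L^2(\Omega)$ into $H^1(\Omega)$, every (generalized) eigenvector for a nonzero eigenvalue automatically lies in $H^1(\Omega)$, so the $L^2$ spectral statements and the $H^1$ ones in the proposition coincide.
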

\begin{proof}
    See \Cref{subsec:7.2}.
\end{proof}

Next, using the main results \Cref{thm:wellposed} and \Cref{thm:convergence} in this paper, we can get the following theorem bounding the norm of operators.
\begin{theorem}
    \label{thm:spectra_convergence}
    Under the assumptions in \Cref{assumption:kernel}, there exists a constant $C>0$ only depends on $\Omega$ and the kernel function $R$, such that
    \[\left\|T-T_\delta\right\|_{H^{1}} \leq C \delta, \quad \left\|T_\delta\right\|_{H^{1}} \leq C\]
\end{theorem}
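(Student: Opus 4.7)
The plan is to derive both bounds as immediate corollaries of the main theorems \cref{thm:wellposed} and \cref{thm:convergence}, after first checking that both $T$ and $T_\delta$ are well defined as maps from $H^1(\Omega)$ into $H^1(\Omega)$ so that the operator norm $\|\cdot\|_{H^1}$ makes sense. For $T$, given $f\in H^1(\Omega)\subset L^2(\Omega)$, elliptic regularity for the Dirichlet Laplacian on a $C^2$ domain yields $Tf\in H^2(\Omega)\subset H^1(\Omega)$ with $\|Tf\|_{H^1(\Omega)}\le C\|f\|_{L^2(\Omega)}$. For $T_\delta$, \cref{thm:wellposed} asserts $T_\delta f=u_\delta\in H^1(\Omega)$ with $\|u_\delta\|_{H^1(\Omega)}\le C\|f\|_{L^2(\Omega)}$ for a constant $C$ independent of $\delta$. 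Since $\|f\|_{L^2(\Omega)}\le\|f\|_{H^1(\Omega)}$, this immediately gives $\|T_\delta\|_{H^1\to H^1}\le C$, which is the second bound of the theorem.

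For the first bound, I would simply apply \cref{thm:convergence}: assuming $f\in H^1(\Omega)$, the theorem provides the existence of a solution $u_\delta$ to the nonlocal model and a corresponding solution $u$ of the continuous Poisson problem, together with the estimate $\|u-u_\delta\|_{H^1(\Omega)}\le C\delta\|f\|_{H^1(\Omega)}$. Written in operator form this reads $\|(T-T_\delta)f\|_{H^1(\Omega)}\le C\delta\|f\|_{H^1(\Omega)}$ for every $f\in H^1(\Omega)$, which is exactly $\|T-T_\delta\|_{H^1}\le C\delta$.

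There is essentially no obstacle here beyond bookkeeping: the statement is a reformulation of the previously proved well-posedness and convergence theorems in terms of solution operators, and the only care needed is to verify that the hypotheses of \cref{thm:wellposed} (which only requires $f\in L^2(\Omega)$) and \cref{thm:convergence} (which requires $f\in H^1(\Omega)$) are both compatible with viewing the domain of $T$ and $T_\delta$ as $H^1(\Omega)$. Since $H^1(\Omega)\hookrightarrow L^2(\Omega)$ continuously, both hypotheses are automatically satisfied, so the constants in the conclusions still depend only on $\Omega$ and the kernel $R$ (through \cref{assumption:kernel}), as asserted.
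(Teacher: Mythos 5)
Your argument for $\left\|T_\delta\right\|_{H^{1}} \leq C$ matches the paper's, and the overall strategy of reading off the operator bounds from \cref{thm:wellposed} and \cref{thm:convergence} is the right one. However, there is a genuine gap in the first bound: you treat $T_\delta$ as the solution operator of the original model \cref{eq:model}, but in \cref{sec:spectra} $T_\delta$ is defined as the solution operator of a \emph{modified} system in which the right-hand side of the boundary equation is set to zero (rather than $\bar{\mathcal{P}}_{\delta} f(\mathbf{x})=\int_{\Omega} \bar{\bar{R}}_{\delta}(\mathbf{x}, \mathbf{y}) f(\mathbf{y}) \mathrm{d} \mathbf{y}$); the paper's remark after \cref{eq:spectra_model} explains this is done to preserve symmetry. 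Consequently \cref{thm:convergence} does not apply verbatim, and the step ``written in operator form this reads $\|(T-T_\delta)f\|_{H^1}\le C\delta\|f\|_{H^1}$'' is not justified as stated.

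The missing content — which is essentially the entire body of the paper's proof — is to track how this modification changes the boundary truncation error. One finds
\[
\tilde{r}_{\mathrm{bd}}(\mathbf{x}) = r_{\mathrm{bd}}(\mathbf{x}) + \int_{\Omega} \bar{\bar{R}}_{\delta}(\mathbf{x}, \mathbf{y}) f(\mathbf{y})\,\mathrm{d}\mathbf{y},
\]
and one must verify that the extra term is harmless: its $L^2(\partial\Omega)$ norm is of higher order (the paper bounds its square by $C\delta\|f\|_{L^2(\Omega)}$), so it can be absorbed into the estimate $\|r_{\mathrm{bd}}\|_{L^2(\partial\Omega)}\le C\|f\|_{H^1(\Omega)}$ from \cref{thm:boundary_consistency}, after which the convergence machinery of \cref{sec:convergence} goes through unchanged and yields $\|u - u_\delta\|_{H^1(\Omega)}\le C\delta\|f\|_{H^1(\Omega)}$ for the modified system. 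Your proposal would be correct if $T_\delta$ were the solution operator of \cref{eq:model} itself, but as written it skips the one nontrivial verification the theorem actually requires.
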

\begin{proof}
    $\left\|T_\delta\right\|_{H^{1}} \leq C$ is obvious using the $H^1$ estimation of $u_\delta$ in \Cref{thm:wellposed}, given that $\|f\|_{L^2(\Omega)}\leq \|f\|_{H^1(\Omega)}$.
    
    As for $\left\|T-T_\delta\right\|_{H^{1}} \leq C \delta$, slight modification is necessary since we change the right hand side on the boundary in our model \Cref{eq:spectra_model}. The new zero term results in the change of $r_{\mathrm{bd}}(\mathbf{x})$, which now becomes
    \begin{align*}
        \tilde{r}_{\mathrm{bd}}(\mathbf{x}) & =-\frac{1}{\delta^2} \int_{\Omega} \bar{R}_{\delta}(\mathbf{x}, \mathbf{y})u(\mathbf{y}) \mathrm{d} \mathbf{y}-2 \int_{\partial \Omega} \bar{\bar{R}}_{\delta}(\mathbf{x}, \mathbf{y})  \frac{\partial u}{\partial \mathbf{n}}(\mathbf{x}) \mathrm{d} S_{\mathbf{y}} \\ 
        & = r_{\mathrm{bd}}(\mathbf{x}) + \int_{\Omega} \bar{\bar{R}}_{\delta}(\mathbf{x}, \mathbf{y}) f(\mathbf{y}) \mathrm{d} \mathbf{y}
    \end{align*}
    
    We have already proved that \[\|r_{\mathrm{bd}}(\mathbf{x})\|_{L^{2}(\partial \Omega)} \leq C\|f\|_{H^1(\Omega)}\]
    
    and the other term is a lower order term
    \[\left\|\int_{\Omega} \bar{\bar{R}}_{\delta}(\mathbf{x}, \mathbf{y}) f(\mathbf{y}) \mathrm{d} \mathbf{y}\right\|_{L^{2}(\partial \Omega)}^2\leq \frac{C}{\delta} \|f\|_{L^2(\Omega)} \leq \frac{C}{\delta} \|f\|_{H^1(\Omega)}\]
    The dominating term now becomes $O(1/\delta)$. Nonetheless, the convergence results still hold due to the high order of $\delta$ prefactors in \Cref{corol:convergence_h1}. The $\delta^3$ prefactor before $\|r_{\mathrm{bd}}(\mathbf{x})\|^2_{L^{2}(\partial \Omega)}$ now becomes $\delta^2$ due to the extra $1/\delta$, which gives us
    \[\|e_\delta\|_{H^1(\Omega)}^2 \leq C\delta\|e_\delta\|_{H^1(\Omega)} \|u\|_{H^{3}(\Omega)}+ C\delta^2 \|u\|_{H^{3}(\Omega)}^2\]
    
    Then we still have
    \[\left\|u-u_{\delta}\right\|_{H^{1}(\Omega)}\leq C \delta\|f\|_{H^{1}(\Omega)}\]
    
    Thus
    \[\left\|T-T_\delta\right\|_{H^{1}} \leq C \delta\]
\end{proof}


Finally, we are ready to derive the convergence results for eigenvalues and eigenfunctions. The following results are stated with the help of the Riesz spectral projection. Let $X$ be a complex Banach space and $L: X \rightarrow X$ be a compact linear operator. The resolvent set $\rho(L)$ is given by the complex numbers $z \in \mathbb{C}$ such that $z-L$ is bijective. The spectrum of $L$ is $\sigma(L)=\mathbb{C} \backslash \rho(L)$. It is well known that $\sigma(L)$ is a countable set with no limit points other than zero. All non-zero values in $\sigma(L)$ are eigenvalues. If $\lambda$ is a nonzero eigenvalue of $L$, and given a closed smooth curve $\Gamma \subset \rho(L)$ which encloses the eigenvalue $\lambda$ and no other elements of $\sigma(L)$, the Riesz spectral projection associated with $\lambda$ is defined by
\[E(\lambda, L)=\frac{1}{2 \pi i} \int_{\Gamma}(z-L)^{-1} \mathrm{~d} z\]
Then we have the following theorem, giving us the convergence results of eigenvalues and eigenfunctions. The proof is similar to Theorem 4.3 in \cite{tao2020convergence}. Nonetheless, we provide the proof in \Cref{subsec:7.4} for completeness.

\begin{theorem}[\cite{tao2020convergence}]
    \label{thm:spectra_eigenfunction}
    Let $\lambda_{m}$ be the m-th largest eigenvalue of $T$ with multiplicity $\alpha_{m}$ and $\phi_{m}^{k}, k=1, \cdots, \alpha_{m}$ be the normalized eigenfunctions corresponding to $\lambda_{m}$. Let $\lambda_{m}^\delta$ be the m-th largest eigenvalue of $T_\delta$. Let $\gamma_{m}=\min _{j \leq m}\left|\lambda_{j}-\lambda_{j+1}\right|$ and
    \[\left\|\left(T-T_\delta\right) T_\delta\right\|_{H^{1}(\Omega)} \leq\left(\left|\lambda_{m}\right|-\gamma_{m} / 3\right) \gamma_{m} / 3\]
    Then there exists a constant $C$ depend on $\gamma_{m}$ and $\lambda_{m}$, such that
    \[\left|\lambda_m^{\delta}-\lambda_m\right| \leq 2 \left\|T-T_\delta\right\|_{H^1(\Omega)}\]
    \[\left\|\phi_{m}^{k}-E\left(\sigma_{m}^{\delta}, T_{\delta}\right) \phi_{m}^{k}\right\|_{H^{1}(\Omega)} \leq C\left(\left\|T-T_\delta\right\|_{H^{1}(\Omega)}+\left\|\left(T-T_\delta\right) T_\delta\right\|_{H^{1}(\Omega)}\right)\]
    Here $\sigma_{m}^{\delta}=\left\{\lambda_{j}^{\delta} \in \sigma\left(T_\delta\right): j \in I_{m}\right\}$ and $I_{m}=\left\{j \in \mathbb{N}: \lambda_{j}=\lambda_{m}\right\}$.
\end{theorem}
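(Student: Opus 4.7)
The plan is to apply the standard Riesz spectral projection perturbation framework for compact operators to the pair $(T,T_\delta)$, which by \cref{prop:operators} are compact on $H^1(\Omega)$ with real spectra and only genuine eigenvectors. I would take $\Gamma$ to be the circle of radius $\gamma_m/3$ centred at $\lambda_m$. By the definition of $\gamma_m$, $\Gamma$ encloses only $\lambda_m$ among $\sigma(T)$ and keeps distance $\gamma_m/3$ from $\sigma(T)$, which combined with the compactness of $T$ yields a uniform bound $\sup_{z\in\Gamma}\|(z-T)^{-1}\|_{H^1}\leq C_1$ depending only on $\gamma_m$ and $\lambda_m$. Using the resolvent identity together with the decomposition $(z-T)^{-1}=z^{-1}I+z^{-1}(z-T)^{-1}T$, one can rewrite $(z-T)^{-1}(T-T_\delta)$ so that after one step it factors through the product $(T-T_\delta)T_\delta$; the hypothesis $\|(T-T_\delta)T_\delta\|_{H^1}\leq(|\lambda_m|-\gamma_m/3)\gamma_m/3$ together with $|z|\geq|\lambda_m|-\gamma_m/3$ on $\Gamma$ then yields a convergent Neumann series, giving $\Gamma\subset\rho(T_\delta)$ and $\sup_{z\in\Gamma}\|(z-T_\delta)^{-1}\|_{H^1}\leq C_2$.

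With both resolvents uniformly controlled on $\Gamma$, I would form $E(\sigma_m^\delta,T_\delta)=\tfrac{1}{2\pi i}\oint_\Gamma (z-T_\delta)^{-1}\,dz$; the operator-norm difference $\|E(\lambda_m,T)-E(\sigma_m^\delta,T_\delta)\|_{H^1}$ is then $O(\|T-T_\delta\|_{H^1})$, so for $\delta$ small the two projections share the same rank $\alpha_m$, and $\sigma_m^\delta$ consists of exactly $\alpha_m$ eigenvalues of $T_\delta$ enclosed by $\Gamma$. For the eigenfunction estimate, applying the resolvent identity to $\phi_m^k$ together with $(z-T)^{-1}\phi_m^k=(z-\lambda_m)^{-1}\phi_m^k$ gives
\[
\phi_m^k-E(\sigma_m^\delta,T_\delta)\phi_m^k \;=\; -\frac{1}{2\pi i}\oint_\Gamma \frac{(z-T_\delta)^{-1}(T-T_\delta)\phi_m^k}{z-\lambda_m}\,dz.
\]
One further iteration of the resolvent identity inside the integrand splits the right-hand side into a reduced-resolvent term bounded by $C\|T-T_\delta\|_{H^1}$ and a correction that, after using $T\phi_m^k=\lambda_m\phi_m^k$ to rewrite $(T-T_\delta)\phi_m^k$ as $\lambda_m^{-1}\bigl[(T-T_\delta)T_\delta\phi_m^k+(T-T_\delta)^2\phi_m^k\bigr]$, is bounded by $C\bigl(\|(T-T_\delta)T_\delta\|_{H^1}+\|T-T_\delta\|_{H^1}^2\bigr)$. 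Combining the two contributions produces the stated inequality, with the quadratic remainder absorbed into the linear term.

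For the eigenvalue bound, the rank identity above together with the continuity of the spectrum under small perturbation forces $\lambda_m^\delta\in\sigma_m^\delta$, so $\lambda_m^\delta$ lies inside $\Gamma$. Taking a normalized eigenfunction $\psi$ of $T_\delta$ with $T_\delta\psi=\lambda_m^\delta\psi$, the identity $(T-\lambda_m^\delta)\psi=(T-T_\delta)\psi$ combined with the semi-simple real-spectrum structure from \cref{prop:operators} and the uniform resolvent bound away from $\sigma(T)$ gives $\mathrm{dist}(\lambda_m^\delta,\sigma(T))\leq\|T-T_\delta\|_{H^1}$; since $\lambda_m^\delta$ is already within $\gamma_m/3$ of $\lambda_m$ and the next nearest eigenvalue of $T$ is at distance at least $2\gamma_m/3$ from $\Gamma$, the closest point of $\sigma(T)$ to $\lambda_m^\delta$ must be $\lambda_m$ itself, and the bound $|\lambda_m^\delta-\lambda_m|\leq 2\|T-T_\delta\|_{H^1}$ follows, with the factor $2$ absorbing the loss from the non-orthogonal projection onto the $\lambda_m$-eigenspace in $H^1$. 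The main obstacle will be that neither $T$ nor $T_\delta$ is self-adjoint in the $H^1$ inner product, so the uniform resolvent bounds $C_1,C_2$ and the sharp constant in the eigenvalue estimate cannot be read off from a spectral theorem; they must instead be extracted from the compactness and the semi-simple real-spectrum structure of \cref{prop:operators}, and every constant along the way must be shown to depend only on $\gamma_m$ and $\lambda_m$, independently of $\delta$.
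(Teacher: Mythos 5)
Your overall strategy coincides with the paper's: both take $\Gamma$ to be the circle of radius $\gamma_m/3$ about $\lambda_m$, verify the perturbation condition $\|(T-T_\delta)T_\delta\|\le\min_{z\in\Gamma}|z|/\|(z-T)^{-1}\|$, control $(z-T_\delta)^{-1}$ on $\Gamma$ by a Neumann series that factors through $(T-T_\delta)T_\delta$, and compare the Riesz projections by contour integrals; the paper simply packages these steps as \cref{thm:spectra_perturbation} (Atkinson) together with \cref{lemma:spectra} and \cref{lemma:spectra_eigenvalue}, whereas you propose to rederive them. The genuine gap is the uniform resolvent bound for $T$ on $\Gamma$. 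You assert that $\sup_{z\in\Gamma}\|(z-T)^{-1}\|_{H^1}\le C_1(\gamma_m,\lambda_m)$ follows from ``compactness and the semi-simple real-spectrum structure'' of \cref{prop:operators}. That is false in general: a compact operator with real, semi-simple spectrum can have $\|(z-T)^{-1}\|$ arbitrarily large at a fixed distance from its spectrum when the eigenvectors are nearly linearly dependent (already visible for $2\times 2$ matrices). Every constant downstream --- the Neumann series for $(z-T_\delta)^{-1}$, the projection comparison, and the eigenfunction bound --- rests on this estimate, so without it the argument does not close. The paper's route through \cref{lemma:spectra}, namely $\|(z-T)^{-1}\|_{H^1}\le\max_{n}|z-\lambda_n|^{-1}$, exploits that the solution operator $T$ is self-adjoint with respect to an inner product equivalent to the $H^1$ norm (the Dirichlet energy form), which is exactly the structure you flag as missing but never supply a substitute for.

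A second, smaller gap is the eigenvalue estimate. Your claim $\mathrm{dist}(\lambda_m^\delta,\sigma(T))\le\|T-T_\delta\|_{H^1}$, with the factor $2$ ``absorbing the loss from the non-orthogonal projection,'' is not a derivation. In the paper the factor $2$ comes from \cref{lemma:spectra_eigenvalue}: if $\mathrm{dist}(z,\sigma(T))\ge 2\|T-T_\delta\|_{H^1}$, then $\|(T-T_\delta)(z-T)^{-1}\|_{H^1}\le 1/2$ by \cref{lemma:spectra}, so $z-T_\delta$ is invertible, and hence $\sigma(T_\delta)\subset\bigcup_{n}B(\lambda_n,2\|T-T_\delta\|_{H^1})$; combined with the equality of the projection ranks inside $\Gamma_j$ for all $j\le m$, this pins $\lambda_m^\delta$ to the ball about $\lambda_m$. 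This again hinges on the same resolvent bound. To make your proof complete you should either import the two lemmas from the cited reference, as the paper does, or explicitly establish the self-adjointness of $T$ in an equivalent inner product before claiming any resolvent estimate.
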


Combine \Cref{thm:spectra_convergence} with \Cref{thm:spectra_eigenfunction}, we can easily get first order convergence of both eigenvalues and eigenfunctions.
\begin{corollary}
Let $\lambda_{m}$ be the m-th largest eigenvalue of $T$ with multiplicity $\alpha_{m}$ and $\phi_{m}^{k}, k=1, \cdots, \alpha_{m}$ be the eigenfunctions corresponding to $\lambda_{m}$. Let $\lambda_{m}^\delta$ be the m-th largest eigenvalue of $T_\delta$. Then
\[\left|\lambda_{m}-\lambda_{m}^\delta \right|\leq C\delta, \quad \left\|\phi_{m}^{k}-E\left(\sigma_{m}^{\delta}, T_{\delta}\right) \phi_{m}^{k}\right\|_{H^{1}(\Omega)} \leq C\delta\]
\end{corollary}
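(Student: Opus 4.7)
The plan is to derive this corollary as a direct consequence of \cref{thm:spectra_convergence} and \cref{thm:spectra_eigenfunction}. First I would invoke \cref{thm:spectra_convergence} to obtain the two operator norm estimates
\[
\|T-T_\delta\|_{H^1} \leq C\delta, \qquad \|T_\delta\|_{H^1} \leq C,
\]
valid for all sufficiently small $\delta$. Composing these bounds via submultiplicativity of the operator norm yields
\[
\|(T-T_\delta)T_\delta\|_{H^1(\Omega)} \leq \|T-T_\delta\|_{H^1} \cdot \|T_\delta\|_{H^1} \leq C\delta.
\]

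Next I would check that the technical hypothesis of \cref{thm:spectra_eigenfunction} is satisfied. Since $\lambda_m \neq 0$ and the eigenvalues of $T$ form a discrete set with no accumulation points away from zero, the quantity $\gamma_m = \min_{j\leq m}|\lambda_j - \lambda_{j+1}|$ is a strictly positive constant depending only on $m$, as is the product $(|\lambda_m| - \gamma_m/3)\gamma_m/3$. Therefore, choosing $\delta$ sufficiently small so that $C\delta \leq (|\lambda_m| - \gamma_m/3)\gamma_m/3$, the assumption of \cref{thm:spectra_eigenfunction} is fulfilled.

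Applying \cref{thm:spectra_eigenfunction} then directly yields
\[
|\lambda_m - \lambda_m^\delta| \leq 2\|T-T_\delta\|_{H^1(\Omega)} \leq C\delta,
\]
and
\[
\|\phi_m^k - E(\sigma_m^\delta, T_\delta)\phi_m^k\|_{H^1(\Omega)} \leq C\bigl(\|T-T_\delta\|_{H^1(\Omega)} + \|(T-T_\delta)T_\delta\|_{H^1(\Omega)}\bigr) \leq C\delta,
\]
where the constant $C$ is absorbed at each step and ultimately depends only on $\Omega$, $R$, $\lambda_m$, and $\gamma_m$.

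There is no real obstacle here since the heavy lifting has already been done: the quantitative operator convergence $\|T - T_\delta\|_{H^1} \leq C\delta$ rests on \cref{thm:convergence}, and the abstract spectral perturbation estimate \cref{thm:spectra_eigenfunction} is cited from \cite{tao2020convergence}. The only mild point to justify is that the smallness hypothesis on $\|(T-T_\delta)T_\delta\|_{H^1}$ is automatic in the limit $\delta\to 0$, which is handled by restricting to $\delta \leq \delta_0$ for a suitable $\delta_0(m)$ and absorbing this restriction into the stated constant $C$.
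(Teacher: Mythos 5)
Your proposal is correct and follows exactly the route the paper intends: the corollary is obtained by combining \cref{thm:spectra_convergence} with \cref{thm:spectra_eigenfunction}, using submultiplicativity to bound $\|(T-T_\delta)T_\delta\|_{H^1}\leq C\delta$ and noting that the smallness hypothesis holds once $\delta$ is small enough relative to $\gamma_m$ and $\lambda_m$. The paper states this as an immediate consequence without writing out the details, and your filled-in argument supplies precisely the missing routine steps.
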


\section{Conclusion}
\label{sec:conclusion}

In this work, we propose a nonlocal model to enforce the local Dirichlet boundary condition on Poisson equations. We prove the well-posedness of the proposed model and the first order convergence in $H^1$ norm. Our model provides a general framework to handle Dirichlet boundary condition for nonlocal diffusion problem with smooth domain in any dimension. The nonlocal model and the analysis can be naturally extended to manifold also.
The convergence rate of the proposed nonlocal model is first order in $H^1$ norm which is not optimal (second order). We are also working on the nonlocal diffusion model with optimal convergence rate for Dirichlet boundary condition. The results will be reported in the future paper.

\bibliographystyle{plain}
\bibliography{references}

\newpage
\appendix
\begin{center}
{\Large \bf Appendices}
\end{center}

\section{\texorpdfstring{Proof in \Cref{sec:prelim}}{Proof in Section 2}}
\subsection{\texorpdfstring{Proof of \Cref{prop:estimate}}{Proof of Propositon 2.1}}
\label{subsec:a.1}
\begin{proof}
	First we consider $\int_\Omega \tilde{R}_\delta(\mathbf{x},\mathbf{y})$. The upper bound is easy to prove using the non-negativity of $\tilde{R}$.
	\[\int_\Omega \tilde{R}_\delta(\mathbf{x},\mathbf{y})\mathrm{d} \mathbf{y} \le \int_{\mathbb{R}^n} \tilde{R}_\delta(\mathbf{x},\mathbf{y}) \mathrm{d} \mathbf{y} = C_1\]
	To prove the lower bound, we need to use the condition that $\partial \Omega$ is $C^2$ and $\tilde{R}$ is continuous and bounded. For $\mathbf{x}\in \partial \Omega$,
	\[\lim_{\delta\rightarrow 0} \int_\Omega \tilde{R}_\delta(\mathbf{x},\mathbf{y}) \mathrm{d} \mathbf{y} = \alpha_n \int_{\mathbf{x} + \mathbb{R}^n_+} \tilde{R}\left(\frac{\|\mathbf{x}-\mathbf{y}\|^2}{4}\right) \mathrm{d} \mathbf{y} =\frac{C_1}{2}
	\]
	where $\mathbb{R}^n_+=\{\mathbf{y}=(y_1,\cdots,y_n)\in \mathbb{R}^n: y_1\ge 0\}$. On the other hand, for $\mathbf{x}\in \Omega$, since $\Omega$ is open,
	\[\lim_{\delta \rightarrow 0} \int_\Omega \tilde{R}_\delta(\mathbf{x},\mathbf{y}) \mathrm{d} \mathbf{y} =
	\alpha_n \int_{\mathbb{R}^n} \tilde{R} \left( \frac{\|\mathbf{x}-\mathbf{y}\|^2}{4} \right)\mathrm{d} \mathbf{y} = C_1\]
	Thus for any $\mathbf{x} \in \Omega\cup \partial \Omega$, there exist $\delta_{\mathbf{x}}>0$ such that for any $\delta\le \delta_{\mathbf{x}}$, we have $\int_\Omega \tilde{R}_\delta(\mathbf{x},\mathbf{y}) \mathrm{d} \mathbf{y}>\frac{C_1}{3}$. Using the compactness of $\bar{\Omega}$, there exists $\delta_0>0$ such that for any $\mathbf{x}\in \bar{\Omega}$, $\delta\le \delta_{0}$, we have $\int_\Omega \tilde{R}_\delta(\mathbf{x},\mathbf{y}) \mathrm{d} \mathbf{y}>\frac{C_1}{3}$.
	
	The results for $\int_{\partial \Omega} \tilde{R}_\delta(\mathbf{x},\mathbf{y})\mathrm{d} S_\mathbf{y}$ can be derived similarly. The extra factor $\frac{1}{\delta}$ is because the integration domain changes from $\Omega \in \mathbb{R}^n$ to $\partial \Omega \in \mathbb{R}^{n-1}$.
\end{proof}

\section{\texorpdfstring{Proof in \Cref{sec:wellposed}}{Proof in Section 4}}

\subsection{\texorpdfstring{Proof of \Cref{lemma:r_rbar}}{Proof of Lemma 4.1}}
\label{subsec:b.1}
First we introduce a lemma.
\begin{lemma}[\cite{shi2017convergence}]
	\label{lemma:4_32}
	If $\delta$ is small enough, then for any $u\in L^2(\Omega)$, there exists a constant $C>0$ independent of $\delta$ and $u$, such that
	\[\int_{\Omega} \int_{\Omega} R\left(\frac{\|\mathbf{x}-\mathbf{y}\|^{2}}{4 \delta^2}\right)(u(\mathbf{x})-u(\mathbf{y}))^{2} \mathrm{d} \mathbf{x} \mathrm{d} \mathbf{y} \geq C \int_{\Omega} \int_{\Omega} R\left(\frac{\|\mathbf{x}-\mathbf{y}\|^{2}}{32 \delta^2}\right)(u(\mathbf{x})-u(\mathbf{y}))^{2} \mathrm{d} \mathbf{x} \mathrm{d} \mathbf{y} \]
\end{lemma}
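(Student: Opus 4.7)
The strategy is a chaining argument through intermediate points. The LHS kernel $R(\|\mathbf{x}-\mathbf{y}\|^{2}/(4\delta^{2}))$ has support radius $2\delta$ and, by Assumption~\ref{assumption:kernel}(d), is uniformly bounded below by $\gamma_{0}$ on pairs with $\|\mathbf{x}-\mathbf{y}\|\leq\sqrt{2}\delta$, whereas the RHS kernel has the much larger support radius $4\sqrt{2}\delta$. The idea is that any pair $(\mathbf{x},\mathbf{y})$ active on the right can be traversed by a chain of four hops of length at most $\sqrt{2}\delta$, each of which lies in the nondegenerate region of the LHS kernel.

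Concretely, for $(\mathbf{x},\mathbf{y})\in\Omega^{2}$ with $\|\mathbf{x}-\mathbf{y}\|\leq 4\sqrt{2}\delta$, I would introduce auxiliary points $\mathbf{z}_{1},\mathbf{z}_{2},\mathbf{z}_{3}\in\Omega$, set $\mathbf{z}_{0}=\mathbf{x}$ and $\mathbf{z}_{4}=\mathbf{y}$, and apply Cauchy--Schwarz to get $(u(\mathbf{x})-u(\mathbf{y}))^{2}\leq 4\sum_{i=0}^{3}(u(\mathbf{z}_{i})-u(\mathbf{z}_{i+1}))^{2}$. Introduce the cut-off
\[
\psi(\mathbf{x},\mathbf{y},\mathbf{z}_{1},\mathbf{z}_{2},\mathbf{z}_{3})=\mathbf{1}_{\Omega^{3}}(\mathbf{z}_{1},\mathbf{z}_{2},\mathbf{z}_{3})\prod_{i=0}^{3}\mathbf{1}_{\{\|\mathbf{z}_{i}-\mathbf{z}_{i+1}\|\leq\sqrt{2}\delta\}},
\]
multiply the Cauchy--Schwarz inequality by $\psi$, and integrate over $\Omega^{2}\times\Omega^{3}$. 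On the right, for each summand the three variables not appearing in the integrand can be integrated out, each contributing a factor $\leq C\delta^{n}$ by compact support; the remaining double integral over a pair with $\|\cdot\|\leq\sqrt{2}\delta$ is dominated by $\gamma_{0}^{-1}$ times $\int\!\!\int R(\|\cdot\|^{2}/(4\delta^{2}))(u-u)^{2}$, i.e.\ the LHS of the lemma. On the left, using $R(\|\mathbf{x}-\mathbf{y}\|^{2}/(32\delta^{2}))\leq\|R\|_{\infty}\mathbf{1}_{\{\|\mathbf{x}-\mathbf{y}\|\leq 4\sqrt{2}\delta\}}$ reduces the whole problem to the uniform lower bound $\int_{\Omega^{3}}\psi\,\mathrm{d}\mathbf{z}_{1}\mathrm{d}\mathbf{z}_{2}\mathrm{d}\mathbf{z}_{3}\geq c\delta^{3n}$.

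This uniform volume estimate is the main obstacle. For a convex $\Omega$ it is immediate: let each $\mathbf{z}_{i}$ range over a ball of radius $c'\delta$ around the quarter-point $\mathbf{x}+\tfrac{i}{4}(\mathbf{y}-\mathbf{x})$ and check that all four hop constraints are satisfied. For a general $C^{2}$ domain the quarter-points may lie outside $\Omega$ near a concave portion of the boundary, so I would flatten $\partial\Omega$ locally via a $C^{2}$ diffeomorphism, perform the chain construction in the resulting half-space where it is trivial, and transfer back with a bounded Jacobian factor. The threshold $\delta_{0}$ in the statement is precisely the scale below which such normal coordinate charts exist uniformly along $\partial\Omega$, guaranteed by the $C^{2}$ regularity of \cref{assumption:domain}. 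Collecting the four summands and cancelling the $\delta^{3n}$ factors then yields the claim with a constant $C$ depending only on $\Omega$ and $R$.
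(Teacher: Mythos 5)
The paper does not actually prove this lemma: it is imported verbatim from \cite{shi2017convergence} and used as a black box in the proof of \cref{lemma:r_rbar}, so there is no in-paper argument to compare against. Your chaining strategy is nonetheless the standard route to results of this type, and the overall architecture (Cauchy--Schwarz along a chain, integrating out the auxiliary variables on one side, a uniform volume lower bound on the other) is sound.

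There is, however, a concrete quantitative gap in precisely the step you identify as the main obstacle. By \cref{assumption:kernel}(d) the left-hand kernel is only guaranteed to be bounded below on pairs with $\|\mathbf{x}-\mathbf{y}\|\le\sqrt{2}\delta$, while the right-hand kernel is supported on $\|\mathbf{x}-\mathbf{y}\|\le 4\sqrt{2}\delta$; the ratio is exactly $4$, so a chain of four hops of length at most $\sqrt{2}\delta$ covers the required distance with zero slack. Consequently the uniform bound $\int_{\Omega^{3}}\psi\,\mathrm{d}\mathbf{z}_1\mathrm{d}\mathbf{z}_2\mathrm{d}\mathbf{z}_3\ge c\delta^{3n}$ is false: as $\|\mathbf{x}-\mathbf{y}\|$ approaches $4\sqrt{2}\delta$ the admissible set of $(\mathbf{z}_{1},\mathbf{z}_{2},\mathbf{z}_{3})$ collapses onto the exact quarter points and its measure tends to zero, while the right-hand kernel need not vanish there (the assumption $R\in C^{1}([0,1])$ with $R=0$ on $(1,\infty)$ does not force $R(1)=0$). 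In particular the ``immediate'' verification for convex $\Omega$ does not go through: if $\mathbf{z}_{1}$ ranges over a ball of radius $c'\delta$ about $\mathbf{x}+\frac{1}{4}(\mathbf{y}-\mathbf{x})$ and $\|\mathbf{x}-\mathbf{y}\|$ is close to $4\sqrt{2}\delta$, then $\|\mathbf{x}-\mathbf{z}_{1}\|$ exceeds $\sqrt{2}\delta$. The boundary-flattening step compounds the problem, since the bi-Lipschitz distortion of the chart consumes additional slack that a four-link chain does not possess. The fix is routine but changes the construction: use at least five links, so that each intermediate point retains a ball of admissible positions of radius proportional to $\delta$ with margin left over to absorb the chart distortion near $\partial\Omega$; this alters only the constants ($4$ becomes $5$ in the Cauchy--Schwarz step, $\delta^{3n}$ becomes $\delta^{4n}$ on both sides). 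With that modification your argument closes.
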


Then we shall prove \Cref{lemma:r_rbar}.
\begin{proof}
	Using \Cref{lemma:4_32},
	\begin{align}
		&\int_{\Omega} \int_{\Omega} R_{\delta}(\mathbf{x}, \mathbf{y})(u(\mathbf{x})-u(\mathbf{y}))^{2} \mathrm{d} \mathbf{x} \mathrm{d} \mathbf{y} \nonumber\\
		= {}& C_\delta\int_{\Omega} \int_{\Omega} R\left(\frac{\|\mathbf{x}-\mathbf{y}\|^{2}}{4 \delta^2}\right)(u(\mathbf{x})-u(\mathbf{y}))^{2} \mathrm{d} \mathbf{x} \mathrm{d} \mathbf{y} \nonumber\\
		\geq {}& C_1 C_\delta \int_{\Omega} \int_{\Omega} R\left(\frac{|\mathbf{x}-\mathbf{y}|^{2}}{32 \delta^2}\right)(u(\mathbf{x})-u(\mathbf{y}))^{2} \mathrm{d} \mathbf{x} \mathrm{d} \mathbf{y} \nonumber\\
		\geq {}& C_1 C_\delta \int_{\Omega} \int_{|\mathbf{x}-\mathbf{y}|\leq 2\delta} R\left(\frac{|\mathbf{x}-\mathbf{y}|^{2}}{32 \delta^2}\right)(u(\mathbf{x})-u(\mathbf{y}))^{2} \mathrm{d} \mathbf{x} \mathrm{d} \mathbf{y} \nonumber\\
		\geq {}& C_1 C_\delta\gamma_0 \int_{\Omega} \int_{|\mathbf{x}-\mathbf{y}|\leq 2\delta} (u(\mathbf{x})-u(\mathbf{y}))^{2} \mathrm{d} \mathbf{x} \mathrm{d} \mathbf{y} \label{eq:lemma_r_rbar1}\\
		\geq {}& C C_\delta \int_{\Omega} \int_{|\mathbf{x}-\mathbf{y}|\leq 2 \delta} \bar{R}\left(\frac{|\mathbf{x}-\mathbf{y}|^{2}}{4 \delta^2}\right)(u(\mathbf{x})-u(\mathbf{y}))^{2} \mathrm{d} \mathbf{x} \mathrm{d} \mathbf{y} \label{eq:lemma_r_rbar2}\\
		= {}& C \int_{\Omega} \int_{\Omega} \bar{R}_{\delta}(\mathbf{x}, \mathbf{y})(u(\mathbf{x})-u(\mathbf{y}))^{2} \mathrm{d} \mathbf{x} \mathrm{d} \mathbf{y} \nonumber
	\end{align}
	Here in \Cref{eq:lemma_r_rbar1} we use the nondegeneracy property in \Cref{assumption:kernel} (d), and in \Cref{eq:lemma_r_rbar2} we use smoothness and compact support to get $\bar{R}$ is bounded.
\end{proof}

\subsection{\texorpdfstring{Proof of \Cref{prop:bounded}}{Proof of Proposition 4.3}}
\label{subsec:b.2}
\begin{proof}
	The technique we use is almost identical to that used in \Cref{prop:continuous}. Each term can be controlled.
	\begin{align*}
	    \langle F, p_\delta \rangle & = \int_{\Omega} p_\delta(\mathbf{x}) \int_{\Omega}  \bar{R}_{\delta}(\mathbf{x}, \mathbf{y}) f(\mathbf{y})\mathrm{d} \mathbf{y} \mathrm{d} \mathbf{x} - \int_{\Omega} p_\delta(\mathbf{x}) \int_{\partial \Omega} \frac{\bar{R}_{\delta}(\mathbf{x}, \mathbf{y})}{\bar{\bar{w}}_\delta(\mathbf{y})} \int_{\Omega} \bar{\bar{R}}_{\delta}(\mathbf{y}, \mathbf{s}) f(\mathbf{s}) \mathrm{d} \mathbf{s} \mathrm{d} S_{\mathbf{y}} \mathrm{d} \mathbf{x} \\
	    &\leq C\left\|p_{\delta}\right\|_{L^{2}(\Omega)}\left\|f\right\|_{L^{2}(\Omega)}
	\end{align*}
	Since $f\in L^2(\Omega)$, we get $F$ is bounded.
\end{proof}

\subsection{\texorpdfstring{Proof of \Cref{prop:gradient_estimation}}{Proof of Proposition 4.5}}
\label{subsec:4.8}
\begin{proof}
We expand the model, reformat it and get
\[u_\delta(\mathbf{x})=\frac{1}{w_\delta(\mathbf{x})}\int_{\Omega} R_{\delta}(\mathbf{x}, \mathbf{y})u_\delta(\mathbf{y}) \mathrm{d} \mathbf{y}+ \frac{2\delta^2}{w_\delta(\mathbf{x})} \int_{\partial \Omega} \bar{R}_{\delta}(\mathbf{x}, \mathbf{y}) v_\delta(\mathbf{y}) \mathrm{d} S_{\mathbf{y}}+\frac{\delta^2}{w_\delta(\mathbf{x})}F_{\mathrm{in}}(\mathbf{x})\]
	
We substitute $v_\delta(\mathbf{y})$ with \Cref{eq:v_eliminate} and get
\begin{align*}
    u_\delta(\mathbf{x})=&\frac{1}{w_\delta(\mathbf{x})}\int_{\Omega} R_{\delta}(\mathbf{x}, \mathbf{y})u_\delta(\mathbf{y}) \mathrm{d} \mathbf{y} - \frac{1}{w_\delta(\mathbf{x})}\int_{\partial \Omega} \frac{\bar{R}_{\delta}(\mathbf{x}, \mathbf{y}) \bar{w}_\delta(\mathbf{y}) }{\bar{\bar{w}}_\delta(\mathbf{y})} \hat{u}_\delta(\mathbf{y}) \mathrm{d} S_{\mathbf{y}} \\
    &-\frac{\delta^2}{w_\delta(\mathbf{x})}\int_{\partial \Omega} \frac{\bar{R}_{\delta}(\mathbf{x}, \mathbf{y})}{\bar{\bar{w}}_\delta(\mathbf{y})} F_{\mathrm{bd}}(\mathbf{y}) \mathrm{d} S_{\mathbf{y}} + \frac{\delta^2}{w_\delta(\mathbf{x})}F_{\mathrm{in}}(\mathbf{x})
\end{align*}
	
where $\hat{u}_\delta(\mathbf{x})$ is the smoothed version of $u_\delta(\mathbf{x})$ defined in \Cref{eq:u_smoothed}. Thus
\begin{align*}
	\|\nabla u_\delta(\mathbf{x})\|_{L^2(\Omega)}^2 & \leq C\|\nabla\big( \frac{1}{w_\delta(\mathbf{x})}\int_{\Omega} R_{\delta}(\mathbf{x}, \mathbf{y})u_\delta(\mathbf{y}) \mathrm{d} \mathbf{y}  \big) \|_{L^2(\Omega)}^2\\
	& \quad +C\|\nabla\big( \frac{1}{w_\delta(\mathbf{x})}\int_{\partial \Omega} \frac{\bar{R}_{\delta}(\mathbf{x}, \mathbf{y}) \bar{w}_\delta(\mathbf{y}) }{\bar{\bar{w}}_\delta(\mathbf{y})} \hat{u}_\delta(\mathbf{y}) \mathrm{d} S_{\mathbf{y}} \big) \|_{L^2(\Omega)}^2\\
	& \quad +C\|\nabla\big( \frac{\delta^2}{w_\delta(\mathbf{x})}\int_{\partial \Omega} \frac{\bar{R}_{\delta}(\mathbf{x}, \mathbf{y})}{\bar{\bar{w}}_\delta(\mathbf{y})} F_{\mathrm{bd}}(\mathbf{y}) \mathrm{d} S_{\mathbf{y}} \big) \|_{L^2(\Omega)}^2\\
	& \quad +C\|\nabla\big( \frac{\delta^2}{w_\delta(\mathbf{x})}F_{\mathrm{in}}(\mathbf{x}) \big) \|_{L^2(\Omega)}^2 \numberthis \label{eq:h1_gradient_core}
\end{align*}
	
The first term in \Cref{eq:h1_gradient_core} can be directly bounded using \Cref{lemma:gradient}
\[\|\nabla\big(\frac{1}{w_\delta(\mathbf{x})}\int_{\Omega} R_{\delta}(\mathbf{x}, \mathbf{y})u_\delta(\mathbf{y}) \mathrm{d} \mathbf{y} \big)\|_{L^2(\Omega)}^2 \leq \frac{C}{\delta^2}\int_{\Omega} \int_{\Omega} R_{\delta}(\mathbf{x}, \mathbf{y})(u_\delta(\mathbf{x})-u_\delta(\mathbf{y}))^2 \mathrm{d} \mathbf{x} \mathrm{d} \mathbf{y}\]
	
The second term in \Cref{eq:h1_gradient_core} is more complicated
\begin{align*}
	& \left\|\nabla\big( \frac{1}{w_\delta(\mathbf{x})}\int_{\partial \Omega} \frac{\bar{R}_{\delta}(\mathbf{x}, \mathbf{y}) \bar{w}_\delta(\mathbf{y}) }{\bar{\bar{w}}_\delta(\mathbf{y})} \hat{u}_\delta(\mathbf{y}) \mathrm{d} S_{\mathbf{y}} \big) \right\|_{L^2(\Omega)}^2 \numberthis \label{eq:h1_gradeint_second_term}\\
	= & \left\| \frac{\nabla w_\delta(\mathbf{x})}{w^2_\delta(\mathbf{x})}\int_{\partial \Omega} \frac{\bar{R}_{\delta}(\mathbf{x}, \mathbf{y}) \bar{w}_\delta(\mathbf{y}) }{\bar{\bar{w}}_\delta(\mathbf{y})} \hat{u}_\delta(\mathbf{y}) \mathrm{d} S_{\mathbf{y}} + \frac{1}{w_\delta(\mathbf{x})}\int_{\partial \Omega} \frac{\nabla\bar{R}_{\delta}(\mathbf{x}, \mathbf{y}) \bar{w}_\delta(\mathbf{y}) }{\bar{\bar{w}}_\delta(\mathbf{y})} \hat{u}_\delta(\mathbf{y}) \mathrm{d} S_{\mathbf{y}} \right\|_{L^2(\Omega)}^2\\
	\leq {}& 2\left\|\frac{\nabla w_\delta(\mathbf{x})}{w^2_\delta(\mathbf{x})}\int_{\partial \Omega} \frac{\bar{R}_{\delta}(\mathbf{x}, \mathbf{y}) \bar{w}_\delta(\mathbf{y}) }{\bar{\bar{w}}_\delta(\mathbf{y})} \hat{u}_\delta(\mathbf{y}) \mathrm{d} S_{\mathbf{y}} \right\|_{L^2(\Omega)}^2 \\
	& \hspace{150pt} + 2\left\| \frac{1}{w_\delta(\mathbf{x})}\int_{\partial \Omega} \frac{\nabla\bar{R}_{\delta}(\mathbf{x}, \mathbf{y}) \bar{w}_\delta(\mathbf{y}) }{\bar{\bar{w}}_\delta(\mathbf{y})} \hat{u}_\delta(\mathbf{y}) \mathrm{d} S_{\mathbf{y}}\right\|_{L^2(\Omega)}^2
\end{align*}

Here we need an estimate for $\left\|\nabla w_\delta(\mathbf{x})\right\|$.
\begin{align*}
    \left\|\nabla w_\delta(\mathbf{x})\right\| & = \left\|\int_{\Omega} \nabla_{\mathbf{x}}R_{\delta}(\mathbf{x}, \mathbf{y}) \mathrm{d} \mathbf{y} \right\| =\left\|\int_{\Omega} C_\delta R' \left(\frac{\|\mathbf{x} -\mathbf{y}\|^2}{4\delta^2}\right)\cdot\frac{\mathbf{x} -\mathbf{y}}{2\delta^2} \mathrm{d} \mathbf{y} \right\| \\
    &\leq \frac{1}{\delta} \int_{\Omega} C_\delta R'\left(\frac{\|\mathbf{x} -\mathbf{y}\|^2}{4\delta^2}\right) \mathrm{d} \mathbf{y} \leq \frac{C}{\delta}
\end{align*}
in which we use \Cref{assumption:kernel} (c) compact support property, so that $\left\|\mathbf{x} -\mathbf{y}\right\|\leq 2\delta$, and thus $\left\|\frac{\mathbf{x} -\mathbf{y}}{2\delta^2}\right\|\leq \frac{1}{\delta}$.

Then we deal with the two terms in \Cref{eq:h1_gradeint_second_term} separately,
\begin{align*}
	& \left\|\frac{\nabla w_\delta(\mathbf{x})}{w^2_\delta(\mathbf{x})}\int_{\partial \Omega} \frac{\bar{R}_{\delta}(\mathbf{x}, \mathbf{y}) \bar{w}_\delta(\mathbf{y}) }{\bar{\bar{w}}_\delta(\mathbf{y})} \hat{u}_\delta(\mathbf{y}) \mathrm{d} S_{\mathbf{y}} \right\|_{L^2(\Omega)}^2 \\
	\leq {}& \frac{C_1}{\delta^2} \int_{\Omega} \left(\int_{\partial \Omega} \frac{\bar{R}_{\delta}(\mathbf{x}, \mathbf{y}) \bar{w}_\delta(\mathbf{y}) }{\bar{\bar{w}}_\delta(\mathbf{y})} \hat{u}_\delta(\mathbf{y}) \mathrm{d} S_{\mathbf{y}} \right)^2 \mathrm{d} \mathbf{x}\\
	\leq {}& C_2 \int_{\Omega} \left(\int_{\partial \Omega} \bar{R}_{\delta}(\mathbf{x}, \mathbf{y}) \hat{u}_\delta(\mathbf{y}) \mathrm{d} S_{\mathbf{y}} \right)^2 \mathrm{d} \mathbf{x} \\
	\leq {}& C_2 \int_{\Omega} \left(\int_{\partial \Omega} \bar{R}_{\delta}(\mathbf{x}, \mathbf{y}) \hat{u}^2_\delta(\mathbf{y}) \mathrm{d} S_{\mathbf{y}} \right) \left(\int_{\partial \Omega} \bar{R}_{\delta}(\mathbf{x}, \mathbf{y}) \mathrm{d} S_{\mathbf{y}} \right) \mathrm{d} \mathbf{x} \\
	\leq {}& \frac{C_3}{\delta} \int_{\Omega} \left(\int_{\partial \Omega} \bar{R}_{\delta}(\mathbf{x}, \mathbf{y}) \hat{u}^2_\delta(\mathbf{y}) \mathrm{d} S_{\mathbf{y}} \right) \mathrm{d} \mathbf{x} \\
	= {}& \frac{C_3}{\delta} \int_{\partial \Omega} \hat{u}^2_\delta(\mathbf{y}) \int_{\Omega} \bar{R}_{\delta}(\mathbf{x}, \mathbf{y}) \mathrm{d} \mathbf{x} \mathrm{d} S_{\mathbf{y}} \\
	\leq {}& \frac{C_4}{\delta} \int_{\partial \Omega} \hat{u}^2_\delta(\mathbf{y}) \mathrm{d} S_{\mathbf{y}} \leq \frac{C_5}{\delta^2}\int_{\partial \Omega} \frac{\bar{w}_\delta^2(\mathbf{x})}{\bar{\bar{w}}_\delta(\mathbf{x})} \hat{u}_\delta^2(\mathbf{x})\mathrm{d} S_\mathbf{x} \\
	={} & \frac{C_5}{\delta^2}\int_{\partial \Omega} \frac{1}{\bar{\bar{w}}_\delta(\mathbf{x})} \left(\int_{\Omega} \bar{R}_\delta(\mathbf{x}, \mathbf{y}) u_\delta(\mathbf{y})\mathrm{d} \mathbf{y}\right)^2 \mathrm{d} S_\mathbf{x}
\end{align*}
and similarly
\begin{align*}
	& \left\| \frac{1}{w_\delta(\mathbf{x})}\int_{\partial \Omega} \frac{\nabla\bar{R}_{\delta}(\mathbf{x}, \mathbf{y}) \bar{w}_\delta(\mathbf{y}) }{\bar{\bar{w}}_\delta(\mathbf{y})} \hat{u}_\delta(\mathbf{y}) \mathrm{d} S_{\mathbf{y}} \right\|_{L^2(\Omega)}^2 \\
	= & \left\| \frac{1}{w_\delta(\mathbf{x})}\int_{\partial \Omega} \frac{R_{\delta}(\mathbf{x}, \mathbf{y}) \bar{w}_\delta(\mathbf{y}) }{\bar{\bar{w}}_\delta(\mathbf{y})} \hat{u}_\delta(\mathbf{y}) \frac{x-y}{2\delta^2} \mathrm{d} S_{\mathbf{y}}\right\|_{L^2(\Omega)}^2 \\
	\leq & \frac{C_1}{\delta^2} \int_{\Omega} \left(\int_{\partial \Omega} \frac{R_{\delta}(\mathbf{x}, \mathbf{y})\bar{w}_\delta(\mathbf{y}) }{\bar{\bar{w}}_\delta(\mathbf{y})} \hat{u}_\delta(\mathbf{y}) \mathrm{d} S_{\mathbf{y}} \right)^2 \mathrm{d} \mathbf{x} \\
	\leq & \frac{C_2}{\delta^2}\int_{\partial \Omega} \frac{1}{\bar{\bar{w}}_\delta(\mathbf{x})} \left(\int_{\Omega} \bar{R}_\delta(\mathbf{x}, \mathbf{y}) u_\delta(\mathbf{y})\mathrm{d} \mathbf{y}\right)^2 \mathrm{d} S_\mathbf{x}
\end{align*}

Thus \Cref{eq:h1_gradeint_second_term} can be controlled by
\begin{align*}
    &\left\|\nabla \big( \frac{1}{w_\delta(\mathbf{x})}\int_{\partial \Omega} \frac{\bar{R}_{\delta} (\mathbf{x}, \mathbf{y}) \bar{w}_\delta(\mathbf{y}) }{\bar{\bar{w}}_\delta(\mathbf{y})} \hat{u}_\delta(\mathbf{y}) \mathrm{d} S_{\mathbf{y}} \big) \right\|_{L^2(\Omega)}^2 \\
    \leq {}&\frac{C}{\delta^2}\int_{\partial \Omega} \frac{1}{\bar{\bar{w}}_\delta(\mathbf{x})} \left(\int_{\Omega} \bar{R}_\delta(\mathbf{x}, \mathbf{y}) u_\delta(\mathbf{y})\mathrm{d} \mathbf{y}\right)^2 \mathrm{d} S_\mathbf{x}
\end{align*}

The third and fourth term in \Cref{eq:h1_gradient_core} can be calculated similarly
\[\|\nabla\big( \frac{\delta^2}{w_\delta(\mathbf{x})}\int_{\partial \Omega} \frac{\bar{R}_{\delta}(\mathbf{x}, \mathbf{y})}{\bar{\bar{w}}_\delta(\mathbf{y})} F_{\mathrm{bd}}(\mathbf{y}) \mathrm{d} S_{\mathbf{y}} \big) \|_{L^2(\Omega)}^2 \leq C\delta^3 \| F_{\mathrm{bd}}(\mathbf{x}) \|_{L^2(\partial \Omega)}^2\]
\[\|\nabla\big( \frac{\delta^2}{w_\delta(\mathbf{x})} F_{\mathrm{in}}(\mathbf{x}) \big) \|_{L^2(\Omega)}^2 \leq C\delta^2 \| F_{\mathrm{in}}(\mathbf{x}) \|_{L^2(\Omega)}^2 + C\delta^4 \| \nabla F_{\mathrm{in}}(\mathbf{x}) \|_{L^2(\Omega)}^2\]
	
Combine all estimates for the four terms in \Cref{eq:h1_gradient_core}, we have controlled $\|\nabla u_\delta\|_{L^2(\Omega)}$ as in \Cref{eq:h1_gradient_estimation}.
\end{proof}

\section{\texorpdfstring{Proof in \Cref{sec:convergence}}{Proof in Section 5}}

\subsection{\texorpdfstring{Proof of \Cref{thm:boundary_consistency}}{Proof of Theorem 5.2}}
\label{subsec:5.4}
\begin{proof}
We investigate the order of $\|r_{\mathrm{bd}}(\mathbf{x})\|_{L^{2}(\partial \Omega)}$.

\begin{align*}
    &\quad \left\| r_{\mathrm{bd}}(\mathbf{x}) \right\|_{L^2(\partial \Omega)}^2 \\
    &= \left\|\bar{r}_{\mathrm{in}}(\mathbf{x})- 2 \int_{\partial \Omega} \bar{\bar{R}}_\delta(\mathbf{x}, \mathbf{y})\left(\frac{\partial u}{\partial \mathbf{n}}(\mathbf{x})-\frac{\partial u}{\partial \mathbf{n}}(\mathbf{y}) \right)  \mathrm{d} S_{\mathbf{y}} \right\|_{L^2(\partial \Omega)}^2\\
    & = \left\|\bar{r}_{\mathrm{it}}(\mathbf{x}) + \bar{r}_{\mathrm{bl}}(\mathbf{x})- 2 \int_{\partial \Omega} \bar{\bar{R}}_\delta(\mathbf{x}, \mathbf{y})\left(\frac{\partial u}{\partial \mathbf{n}}(\mathbf{x})-\frac{\partial u}{\partial \mathbf{n}}(\mathbf{y}) \right)  \mathrm{d} S_{\mathbf{y}} \right\|_{L^2(\partial \Omega)}^2\\
    & \vphantom{\left\|\int_{\partial \Omega} \bar{\bar{R}}_\delta(\mathbf{x}, \mathbf{y})\left(\frac{\partial u}{\partial \mathbf{n}}(\mathbf{x})-\frac{\partial u}{\partial \mathbf{n}}(\mathbf{y}) \right)  \mathrm{d} {\mathbf{y}} \right\|_{L^2(\partial \Omega)}^2} \leq C \left\|\bar{r}_{\mathrm{it}}(\mathbf{x})\right\|_{L^2(\partial \Omega)}^2 + C\left\| \bar{r}_{\mathrm{bl}}(\mathbf{x}) \right\|_{L^2(\partial \Omega)}^2 + C\left\|\int_{\partial \Omega} \bar{\bar{R}}_\delta(\mathbf{x}, \mathbf{y})\left(\frac{\partial u}{\partial \mathbf{n}}(\mathbf{x})-\frac{\partial u}{\partial \mathbf{n}}(\mathbf{y}) \right)  \mathrm{d} S_{\mathbf{y}} \right\|_{L^2(\partial \Omega)}^2  \numberthis \label{eq:convergence_consistency_core}
\end{align*}

Using trace theorem and \Cref{thm:consistency}, the first term in \Cref{eq:convergence_consistency_core} can be controlled,
\begin{align*}
	\left\|\bar{r}_{\mathrm{it}}(\mathbf{x})\right\|_{L^{2}(\partial \Omega)} & \leq C_1 \left\|\bar{r}_{\mathrm{it}}(\mathbf{x})\right\|_{H^1(\Omega)} = C_1 (\left\|\bar{r}_{\mathrm{it}}(\mathbf{x})\right\|_{L^2(\Omega)} +\left\|\nabla \bar{r}_{\mathrm{it}}(\mathbf{x})\right\|_{L^2(\Omega)})\\
	& \leq C_1 (C_2 \delta \|u\|_{H^{3}(\Omega)}+C_2 \|u\|_{H^{3}(\Omega)}) \leq C\|u\|_{H^{3}(\Omega)}
\end{align*}

Using \Cref{assumption:kernel} (c) compact support property, we have $\left\|\mathbf{x} -\mathbf{y}\right\|\leq 2\delta$. Thus the second term in \Cref{eq:convergence_consistency_core} can be upper bounded,
\begin{align*}
	\left\|\bar{r}_{\mathrm{bl}}(\mathbf{x})\right\|_{L^{2}(\partial \Omega)}^2 & =\left\| \int_{\partial \Omega} \bar{\bar{R}}_\delta(\mathbf{x}, \mathbf{y})(\mathbf{x}-\mathbf{y}) \cdot \mathbf{b}(\mathbf{y}) \mathrm{d} S_{\mathbf{y}} \right\|_{L^{2}(\partial \Omega)}^2 \\
	& \leq C_1 \delta^2 \int_{\partial \Omega}\left(\int_{\partial \Omega} \bar{\bar{R}}_\delta(\mathbf{x}, \mathbf{y}) \mathbf{b}(\mathbf{y}) \mathrm{d} S_{\mathbf{y}} \right)^2 \mathrm{d} S_{\mathbf{x}}\\
	& \leq C_1 \delta^2 \int_{\partial \Omega}\left(\int_{\partial \Omega} \bar{\bar{R}}_\delta(\mathbf{x}, \mathbf{y}) \mathrm{d} S_{\mathbf{y}} \right)\left(\int_{\partial \Omega} \bar{\bar{R}}_\delta(\mathbf{x}, \mathbf{y}) \mathbf{b}^2(\mathbf{y}) \mathrm{d} S_{\mathbf{y}} \right) \mathrm{d} S_{\mathbf{x}} \\
	& \vphantom{\left(\int_{\partial \Omega} \bar{\bar{R}}_\delta(\mathbf{x}, \mathbf{y}) \mathrm{d} {\mathbf{y}} \right)} \leq C_2 \left\|\mathbf{b}\right\|_{L^{2}(\partial \Omega)}^2 \leq C_3\left\|\mathbf{b}\right\|_{H^{1}(\Omega)}^2 \leq C\left\|u\right\|_{H^{3}(\Omega)}^2
\end{align*}
	
As for the third term in \Cref{eq:convergence_consistency_core}, we need a local parametrization of the boundary $\partial \Omega$. Here we use the following proposition, which basically says there exists a local parametrization of small distortion and the parameter domain is convex and big enough.

\begin{proposition}[\cite{shi2017convergence}]
Assume both $\mathcal{M}$ and $\partial \mathcal{M}$ are compact and $C^2$ smooth.
$\sigma$ is the minimum of the reaches of $\mathcal{M}$ and $\partial \mathcal{M}$. For any point $\mathbf{x}\in \mathcal{M}$, there is a neighborhood $U \subset \mathcal{M}$ of $\mathbf{x}$, so that there is a parametrization $\Phi\in C^2: \Omega \subset \mathbb{R}^k \rightarrow U$ satisfying the following conditions. For any $\rho \leq 0.1$, 
\begin{enumerate}
\item[(i)] $\Omega$ is convex and contains at least half of the ball
$B_{\Phi^{-1}(\mathbf{x})}(\frac{\rho}{5} \sigma)$, 
i.e., $vol(\Omega \cap B_{\Phi^{-1}(\mathbf{x})}(\frac{\rho}{5} \sigma)) > \frac{1}{2}(\frac{\rho}{5}\sigma)^k w_k$ where $w_k$ is the volume of unit ball in $\mathbf{R}^k$;
\item[(ii)] $B_{\mathbf{x}}(\frac{\rho}{10} \sigma) \cap \mathcal{M} \subset U$. 
\item[(iii)] The determinant the Jacobian of $\Phi$ is bounded:
$(1-2\rho)^k \leq |D\Phi|  \leq (1+2\rho)^k$ over $\Omega$.  
\item[(iv)] For any points $\mathbf{y}, \mathbf{z} \in U$, 
$1-2\rho \leq \frac{|\mathbf{y}-\mathbf{z}|}{\left|\Phi^{-1}(\mathbf{y}) - \Phi^{-1}(\mathbf{z})\right|}  \leq 1+3\rho$.
\end{enumerate}
\label{prop:local_param}
\end{proposition}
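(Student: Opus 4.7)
The plan is to construct $\Phi$ as the inverse of the orthogonal projection onto the tangent plane $T_\mathbf{x}\mathcal{M}$. Fix $\mathbf{x} \in \mathcal{M}$ and identify $T_\mathbf{x}\mathcal{M}$ with $\mathbb{R}^k$ through an orthonormal frame. Near $\mathbf{x}$, the manifold $\mathcal{M}$ can be written locally as the graph of a $C^2$ function $h$ from $T_\mathbf{x}\mathcal{M}$ into the normal space, with $h(0)=0$ and $\nabla h(0)=0$. The positive reach hypothesis guarantees $\|\nabla^2 h\| \le 1/\sigma$ on a neighborhood; this is the quantitative content of the reach in Federer's classical sense, and it is precisely what converts the qualitative $C^2$ assumption into effective estimates. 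Defining $\Phi(\mathbf{z}) = (\mathbf{z}, h(\mathbf{z}))$ then gives a $C^2$ parametrization on a subset $\Omega \subset T_\mathbf{x}\mathcal{M}$, which at interior points I would take to be a Euclidean ball of radius on the order of $\rho\sigma$.

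With this setup, the Jacobian and bi-Lipschitz bounds follow from Taylor expansion. Writing $|D\Phi(\mathbf{z})|^2 = \det\bigl(I + (\nabla h)^T \nabla h\bigr)$ and using $\|\nabla h(\mathbf{z})\| \le |\mathbf{z}|/\sigma$ gives $(1-2\rho)^k \le |D\Phi| \le (1+2\rho)^k$ whenever $|\mathbf{z}| \le c\rho\sigma$ for a small absolute constant $c$, proving (iii). For (iv) one writes $|\Phi(\mathbf{y}) - \Phi(\mathbf{z})|^2 = |\mathbf{y}-\mathbf{z}|^2 + |h(\mathbf{y})-h(\mathbf{z})|^2$, and the mean value estimate $|h(\mathbf{y})-h(\mathbf{z})| \le \frac{1}{\sigma}(|\mathbf{y}|+|\mathbf{z}|)|\mathbf{y}-\mathbf{z}|$ yields the claimed two-sided ratio. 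Claim (ii) is then immediate by inverting (iv): any $\mathbf{y} \in B_\mathbf{x}(\rho\sigma/10) \cap \mathcal{M}$ satisfies $|\Phi^{-1}(\mathbf{y}) - \Phi^{-1}(\mathbf{x})| \le (1+3\rho)\cdot \rho\sigma/10 < \rho\sigma/5$, so it lies in $\Omega$ as soon as $\Omega$ contains a ball of that radius.

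Claim (i) requires more care and is where the boundary case enters. At an interior point of $\mathcal{M}$ one takes $\Omega$ to be a Euclidean ball, so the inclusion of the small ball is trivially an equality of volumes. At a boundary point $\mathbf{x} \in \partial \mathcal{M}$, one must intersect the tangent-plane ball with a half-space approximating the tangent plane to $\partial \mathcal{M}$; since $\partial \mathcal{M}$ also has reach at least $\sigma$, the true curved boundary differs from its tangent hyperplane by $O(|\mathbf{z}|^2/\sigma)$, and the resulting convex domain captures at least half of $B_{\Phi^{-1}(\mathbf{x})}(\rho\sigma/5)$ once $\rho \le 0.1$. The parameter domain $\Omega$ is then the intersection of the tangent-plane ball with this half-space, which is convex by construction.

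The principal obstacle is the boundary case: two curvature scales (for $\mathcal{M}$ and for $\partial \mathcal{M}$) must be tracked simultaneously, and the half-space cutoff has to be calibrated so that convexity is preserved while the half-volume estimate still holds. Matching the exact constants $\rho/5$, $\rho/10$, $1 \pm 2\rho$, and $1+3\rho$ amounts to careful bookkeeping of the second-order Taylor remainders, which is routine but tedious. The clean structural insight is that a $C^2$ manifold with positive reach is locally a graph with Hessian bounded by $1/\sigma$, and every assertion of the proposition follows as a quantitative consequence of this one fact combined with elementary determinant and mean-value estimates.
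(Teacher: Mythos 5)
The paper itself does not prove this proposition: it is imported verbatim from \cite{shi2017convergence}, and in the only place it is invoked (the proof of \cref{thm:boundary_consistency}) it is applied with $\mathcal{M}=\partial\Omega$ and $\partial\mathcal{M}=\varnothing$, so there is no in-paper argument to compare against. Your overall strategy --- represent $\mathcal{M}$ near $\mathbf{x}$ as a graph $(\mathbf{z},h(\mathbf{z}))$ over $T_{\mathbf{x}}\mathcal{M}$, use reach $\geq\sigma$ to get $\|\nabla^2 h\|\lesssim 1/\sigma$ and hence $\|\nabla h(\mathbf{z})\|\lesssim|\mathbf{z}|/\sigma$, then read off (iii) from $\det(I+(Dh)^TDh)$ and (iv) from $|\Phi(\alpha)-\Phi(\beta)|^2=|\alpha-\beta|^2+|h(\alpha)-h(\beta)|^2$ --- is the standard one and is sound; in the boundaryless case, where $\Omega$ is a Euclidean ball, it delivers all four conclusions, which is all this paper needs. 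Two small points: deducing (ii) from (iv) is mildly circular, since (iv) is only asserted for $\mathbf{y},\mathbf{z}\in U$ and (ii) is what certifies membership in $U$; argue instead directly from the Lipschitz bound on $h$. Also the inequality you want is $|\Phi^{-1}(\mathbf{y})-\Phi^{-1}(\mathbf{x})|\leq|\mathbf{y}-\mathbf{x}|/(1-2\rho)$, not $(1+3\rho)|\mathbf{y}-\mathbf{x}|$, though numerically both suffice for $\rho\leq 0.1$.

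The genuine gap is in the boundary case, and it is not merely the constant bookkeeping you describe. If $\mathbf{x}\in\partial\mathcal{M}$ and you set $\Omega=(\text{ball})\cap(\text{half-space})$, you must simultaneously have $\Phi(\Omega)\subset\mathcal{M}$ (otherwise $\Phi$ is not a parametrization of a subset $U\subset\mathcal{M}$) and, for (ii), $\Phi(\Omega)\supset B_{\mathbf{x}}(\frac{\rho}{10}\sigma)\cap\mathcal{M}$. But the preimage of $\partial\mathcal{M}$ in the parameter plane is a curved hypersurface deviating from its tangent hyperplane by $\Theta(|\mathbf{z}|^2/\sigma)$, with either sign possible. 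A flat cut therefore either overshoots, so $\Omega$ contains parameter points over which the graph leaves $\mathcal{M}$, or undershoots, so $U$ misses points required by (ii); while taking $\Omega$ to be the exact preimage restores (ii) but destroys convexity whenever $\partial\mathcal{M}$ is concave toward $\mathcal{M}$. Repairing this needs an additional device, e.g.\ composing the graph map with a $C^2$ boundary-straightening diffeomorphism of the parameter domain that is $(1+O(\rho))$-bi-Lipschitz, after which $\Omega$ is genuinely convex and (iii), (iv) survive with the stated constants. Note also that a half-space through $\Phi^{-1}(\mathbf{x})$ captures exactly half the ball, whereas (i) asserts a strict inequality, so the cut must be positioned with care. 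None of this affects the present paper, which only uses the $\partial\mathcal{M}=\varnothing$ case.
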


We choose $\mathcal{M}=\partial \Omega$ in the proposition, then $\partial \mathcal{M}=\varnothing$. According to the \Cref{assumption:domain}, $(\partial \Omega, \varnothing)$ clearly satisfies the smoothness condition. Let $\rho=0.1$, $\sigma$ be the minimum of the reach of $\partial \Omega$ and $t= \rho \sigma/20$. For any $\mathbf{x}\in \partial \Omega$, denote
\begin{eqnarray}
\label{eq:net}
  B_{\mathbf{x}}^t=\left\{\mathbf{y}\in \partial \Omega: \|\mathbf{x}-\mathbf{y}\|\le t\right\},\quad B_{\mathbf{x}}^{2\delta}=\left\{\mathbf{y}\in \partial \Omega: \|\mathbf{x}-\mathbf{y}\|\le 2\delta\right\}
\end{eqnarray}
and we assume $\delta$ is small enough such that $2\delta\le t$.

Since the boundary $\partial \Omega$ is compact, there exists a $t$-net, $\mathcal{N}_t=\{\mathbf{q}_i\in \partial \Omega,\;i=1,\cdots,N\}$, such that
\[\partial \Omega \subset \bigcup_{i=1}^N B_{\mathbf{q}_i}^t\]
and there exists a partition of $\partial \Omega$, $\{\mathcal{O}_i, \;i=1,\cdots,N\}$, such that $\mathcal{O}_i\cap \mathcal{O}_j=\varnothing,\; i\neq j$ and
\[\partial \Omega=\bigcup_{i=1}^N \mathcal{O}_i,\quad \mathcal{O}_i\subset B_{\mathbf{q}_i}^t,\quad i=1,\cdots,N.\]

Using \Cref{prop:local_param}, there exist a parametrization $\Phi_i: \Omega_i\subset\mathbb{R}^k \rightarrow U_i\subset \partial \Omega,\; i=1,\cdots, N$, such that
\begin{itemize}
\item[1.] (Convexity) $B_{\mathbf{q}_i}^{2t} \subset U_i$ and $\Omega_i$ is convex.
\item[2.] (Smoothness) $\Phi_i\in C^2(\Omega_i)$;
\item[3.] (Locally small deformation) 
For any points $\theta_1, \theta_2\in \Omega_i$,
\begin{equation}
    \frac{1}{2}\left|\theta_1-\theta_2\right| \leq \left\|\Phi_i(\theta_1)-\Phi_i(\theta_2)\right\| \leq 2\left|\theta_1-\theta_2\right|
    \label{eq:convergence_small_deformation}
\end{equation}
\end{itemize}

Using the partition $\{\mathcal{O}_i, \;i=1,\cdots,N\}$, for any $\mathbf{y}\in \partial \Omega$, there exists unique $J(\mathbf{y})\in \{1,\cdots,N\}$
such that $\mathbf{y}\in \mathcal{O}_{J(\mathbf{y})}\subset B_{\mathbf{q}_{J(\mathbf{y})}}^t$. Moreover, using the condition $2\delta \le t$, we have $B_{\mathbf{y}}^{2\delta} \subset B_{\mathbf{q}_{J(\mathbf{y})}}^{2t}\subset U_{J(\mathbf{y})}$.
Then $\Phi_{J(\mathbf{y})}^{-1}(\mathbf{x})$ and $\Phi_{J(\mathbf{y})}^{-1}(\mathbf{y})$ are both well defined for any $\mathbf{x}\in B_{\mathbf{y}}^{2\delta}$. Thus for any $\mathbf{y}\in \partial \Omega$, $\mathbf{x}\in B_{\mathbf{y}}^{2\delta}$, we can define
\[\alpha=\Phi_{J(\mathbf{y})}^{-1}(\mathbf{y}), \qquad \xi=\Phi_{J(\mathbf{y})}^{-1}(\mathbf{x})-\Phi_{J(\mathbf{y})}^{-1}(\mathbf{y})\]

Then we are ready to deal with the third term in \Cref{eq:convergence_consistency_core}.
\begingroup
\allowdisplaybreaks
\begin{align*}
	&\left\|\int_{\partial \Omega} \bar{\bar{R}}_\delta(\mathbf{x}, \mathbf{y})\left(\frac{\partial u}{\partial \mathbf{n}}(\mathbf{x})-\frac{\partial u}{\partial \mathbf{n}}(\mathbf{y}) \right)  \mathrm{d} S_{\mathbf{y}} \right\|_{L^2(\partial \Omega)}^2 \numberthis \label{eq:convergence_boundary_truncation1}\\
	=& \int_{\partial \Omega} \left(\int_{\partial \Omega} \bar{\bar{R}}_\delta(\mathbf{x}, \mathbf{y})\left(\frac{\partial u}{\partial \mathbf{n}}(\mathbf{x})-\frac{\partial u}{\partial \mathbf{n}}(\mathbf{y}) \right)  \mathrm{d} S_{\mathbf{y}} \right)^2 \mathrm{d} S_\mathbf{x} \\
	\leq & \frac{C}{\delta} \int_{\partial \Omega} \int_{\partial \Omega} \bar{\bar{R}}_\delta(\mathbf{x}, \mathbf{y})\left(\frac{\partial u}{\partial \mathbf{n}}(\mathbf{x})-\frac{\partial u}{\partial \mathbf{n}}(\mathbf{y}) \right)^2  \mathrm{d} S_{\mathbf{y}} \mathrm{d} S_\mathbf{x} \\
	= & \frac{C}{\delta} \sum_{i=1}^N \int_{\partial \Omega} \int_{\mathcal{O}_i} \bar{\bar{R}}_\delta(\mathbf{x}, \mathbf{y})\left(\frac{\partial u}{\partial \mathbf{n}}(\mathbf{x})-\frac{\partial u}{\partial \mathbf{n}}(\mathbf{y}) \right)^2  \mathrm{d} S_{\mathbf{y}} \mathrm{d} S_\mathbf{x} \\
	= & \frac{C}{\delta} \sum_{i=1}^N \int_{\mathcal{O}_i} \int_{B_{\mathbf{y}}^{2\delta}} \bar{\bar{R}}_\delta(\mathbf{x}, \mathbf{y})\left(\frac{\partial u}{\partial \mathbf{n}}(\mathbf{x})-\frac{\partial u}{\partial \mathbf{n}}(\mathbf{y}) \right)^2 \mathrm{d} S_{\mathbf{x}} \mathrm{d} S_\mathbf{y}\\
	= & \frac{C}{\delta} \sum_{i=1}^N \int_{\mathcal{O}_i} \int_{B_{\mathbf{y}}^{2\delta}} \bar{\bar{R}}_\delta(\mathbf{x}, \mathbf{y})\left(\frac{\partial u}{\partial \mathbf{n}}\left(\Phi_i(\alpha+\xi)\right)-\frac{\partial u}{\partial \mathbf{n}}\left(\Phi_i(\alpha)\right) \right)^2 \mathrm{d} S_{\mathbf{x}} \mathrm{d} S_\mathbf{y}
\end{align*}
\endgroup
	

Introduce an auxiliary function (we temporarily simplify $\Phi_i$ to $\Phi$ to avoid subscript redundancy)
\[h(s)=\frac{\partial u}{\partial \mathbf{n}}\left(\Phi(\alpha+s\xi)\right), \quad s\in [0,1]\]

Then
\begin{align*}
    h'(s) &= \frac{\mathrm{d}}{\mathrm{d}s} \left( \nabla u \left(\Phi(\alpha+s\xi)\right)\cdot \mathbf{n}\left(\Phi(\alpha+s\xi)\right) \right) \\
    & = \frac{\mathrm{d}}{\mathrm{d}s} \left( \nabla u \left(\Phi(\alpha+s\xi)\right)\right) \cdot \mathbf{n}\left(\Phi(\alpha+s\xi)\right) +  \nabla u \left(\Phi(\alpha+s\xi)\right)\cdot \frac{\mathrm{d}}{\mathrm{d}s} \left( \mathbf{n}\left(\Phi(\alpha+s\xi)\right) \right)
\end{align*}

In the first term,
\[\frac{\mathrm{d}}{\mathrm{d}s} \left( \nabla u \left(\Phi(\alpha+s\xi)\right)\right)= \sum_{i, j=1}^{n}\nabla^{j} \nabla^{i} u \cdot \nabla\Phi^{j} \cdot \xi \]
where $\Phi^{j}$ denotes the $j$-th component of $\Phi$. In the second term, we use the fact that the normal vector $\mathbf{n}$ is orthogonal to the tangent vector, i.e.
\[\mathbf{n}\left(\Phi(\alpha+s\xi)\right) \cdot \nabla^{l} \Phi(\alpha+s\xi) =0 \quad \text{for} \quad l=1,2,\cdots,k \]

Take derivative to $s$ on both sides, we get
\[\frac{\mathrm{d}}{\mathrm{d}s} \left(\mathbf{n}\left(\Phi(\alpha+s\xi)\right)\right) \cdot \nabla^{l} \Phi(\alpha+s\xi) + \mathbf{n}\left(\Phi(\alpha+s\xi)\right) \cdot \frac{\mathrm{d}}{\mathrm{d}s} \nabla^{l} \Phi(\alpha+s\xi) =0 \]

Since we can choose parametrization such that $\left\{\nabla^{l} \Phi(\alpha+s\xi)\right\}_{l=1}^k$ form an orthogonal basis of the tangent space, we have
\begin{align*}
    \left\|\frac{\mathrm{d}}{\mathrm{d}s} \left(\mathbf{n}\left(\Phi(\alpha+s\xi)\right)\right)\right\|^2 & = \sum_{l=1}^k \left( \frac{\frac{\mathrm{d}}{\mathrm{d}s} \left(\mathbf{n}\left(\Phi(\alpha+s\xi)\right)\right) \cdot \nabla^{l} \Phi(\alpha+s\xi)}{\left\|\nabla^{l} \Phi(\alpha+s\xi)\right\|}\right)^2\\
    & = \sum_{l=1}^k \left(\frac{- \mathbf{n}\left(\Phi(\alpha+s\xi)\right) \cdot \frac{\mathrm{d}}{\mathrm{d}s} \nabla^{l} \Phi(\alpha+s\xi)}{\left\|\nabla^{l} \Phi(\alpha+s\xi)\right\|}\right)^2\\
    & = \sum_{l=1}^k \left(\frac{- \mathbf{n}\left(\Phi(\alpha+s\xi)\right) \cdot \sum_{j=1}^n \nabla \nabla^{l} \Phi^j(\alpha+s\xi)\cdot \xi}{\left\|\nabla^{l} \Phi(\alpha+s\xi)\right\|}\right)^2
\end{align*}

Since $\|\mathbf{n}\|=1$, $\xi\leq 2\|\mathbf{x}-\mathbf{y}\| \leq 4 \delta$, and $\Phi\in C^2$, we have
\begin{align*}
    |h'(s)| & \leq C\delta\left\|\sum_{i, j=1}^{n}\nabla^{j} \nabla^{i} u \cdot \nabla\Phi^{j} \right\|+ C\delta \|\nabla u \| \cdot \frac{\left\|\sum_{j=1}^n \nabla \nabla^{l} \Phi^j\right\|}{\left\|\nabla^{l} \Phi\right\|}\\
    & \leq C\delta \left(\left\|\sum_{i, j=1}^{n}\nabla^{j} \nabla^{i} u \right\| + \|\nabla u \|\right):=C\delta~ D^{1,2}(u)
\end{align*}

Finally, we can get
\begin{align*}
    &\int_{\mathcal{O}_i} \int_{B_{\mathbf{y}}^{2\delta}} \bar{\bar{R}}_\delta(\mathbf{x}, \mathbf{y})\left(\frac{\partial u}{\partial \mathbf{n}}\left(\Phi_i(\alpha+\xi)\right)-\frac{\partial u}{\partial \mathbf{n}}\left(\Phi_i(\alpha)\right) \right)^2 \mathrm{d} S_{\mathbf{x}} \mathrm{d} S_\mathbf{y} \numberthis \label{eq:convergence_boundary_truncation2}\\
    \leq & C\delta^2 \int _0^1 \int_{\mathcal{O}_i} \int_{B_{\mathbf{y}}^{2\delta}} \bar{\bar{R}}_\delta(\mathbf{x}, \mathbf{y})\left|D^{1,2} u(\Phi_i(\alpha+s\xi))\right|^{2} \mathrm{d} S_{\mathbf{x}}  \mathrm{d} S_\mathbf{y} \mathrm{d} s\\
    \leq & C\delta^2 \max_{0 \leq s \leq 1} \int_{\mathcal{O}_i} \int_{B_{\mathbf{y}}^{2\delta}} \bar{\bar{R}}_\delta(\mathbf{x}, \mathbf{y})\left|D^{1,2} u(\Phi_i(\alpha+s\xi))\right|^{2} \mathrm{d} S_{\mathbf{x}}  \mathrm{d} S_\mathbf{y}
\end{align*}

Let $\mathbf{z}_{i}=\Phi_{i}(\alpha+s \xi), 0 \leq s \leq 1$, then for any $\mathbf{y} \in \mathcal{O}_{i} \subset B_{\mathbf{q}_{i}}^{t}$ and $\mathbf{x} \in B_{\mathbf{y}}^{2\delta}$, by \Cref{eq:convergence_small_deformation}
\[\left\|\mathbf{z}_{i}-\mathbf{y}\right\| \leq 2 s|\xi| \leq 4 s\|\mathbf{x}-\mathbf{y}\| \leq 8 s \delta, \quad\left\|\mathbf{z}_{i}-\mathbf{q}_{i}\right\| \leq\left\|\mathbf{z}_{i}-\mathbf{y}\right\|+\left\|\mathbf{y}-\mathbf{q}_{i}\right\| \leq 8 s \delta+t \]
	
We can assume that $\delta$ is small enough such that $8\delta \leq t$, then we have $\mathbf{z}_{i} \in B_{\mathbf{q}_{i}}^{2t}$. After changing of variable, we obtain
\begin{align*}
	& \int_{\mathcal{O}_{i}} \int_{B_{\mathbf{y}}^{2\delta}} \bar{\bar{R}}_\delta(\mathbf{x}, \mathbf{y})\left|D^{1,2} u\left(\Phi_{i}(\alpha+s \xi)\right)\right|^{2} \mathrm{~d} S_{\mathbf{x}} \mathrm{d} S_{\mathbf{y}} \numberthis \label{eq:convergence_boundary_truncation3}\\
	\leq & \frac{C_1}{\gamma_{0}} \int_{\mathcal{O}_{i}} \int_{B_{\mathbf{q}_{i}}^{2t}} \frac{1}{s^{k}} \bar{\bar{R}}\left(\frac{\left\|\mathbf{z}_{i}-\mathbf{y}\right\|^{2}}{128 s^{2} \delta^2}\right)\left|D^{1,2} u\left(\mathbf{z}_{i}\right)\right|^{2} \mathrm{~d} S_{\mathbf{z}_{i}} \mathrm{d} S_{\mathbf{y}} \\
	= & \frac{C_1}{\gamma_{0}} \int_{\mathcal{O}_{i}} \frac{1}{s^{k}} \bar{\bar{R}}\left(\frac{\left\|\mathbf{z}_{i}-\mathbf{y}\right\|^{2}}{128 s^{2} \delta^2}\right) \mathrm{d} S_{\mathbf{y}} \int_{B_{\mathbf{q}_{i}}^{2t}} \left|D^{1,2} u\left(\mathbf{z}_{i}\right)\right|^{2} \mathrm{d} S_{\mathbf{z}_{i}} \\
	\leq & \frac{C}{\delta} \int_{B_{\mathbf{q}_{i}}^{2 t}}\left|D^{1,2} u(\mathbf{x})\right|^{2} \mathrm{d} S_{\mathbf{x}}
\end{align*}
	
Combine \Cref{eq:convergence_boundary_truncation1}, \Cref{eq:convergence_boundary_truncation2} and \Cref{eq:convergence_boundary_truncation3}, we have
\[\left\|\int_{\partial \Omega} \bar{\bar{R}}_\delta(\mathbf{x}, \mathbf{y})\left(\frac{\partial u}{\partial \mathbf{n}}(\mathbf{x})-\frac{\partial u}{\partial \mathbf{n}}(\mathbf{y}) \right)  \mathrm{d} S_{\mathbf{y}} \right\|_{L^2(\partial \Omega)}^2 \leq C\|u\|_{H^3(\Omega)} \]

Thus the boundary truncation error \Cref{eq:convergence_consistency_core} can be bounded by
\[\|r_{\mathrm{bd}}(\mathbf{x})\|_{L^{2}(\partial \Omega)}\leq C\|u\|_{H^3(\Omega)}  \]
\end{proof}

\section{\texorpdfstring{Proof in \Cref{sec:robin}}{Proof in Section 6}}
\subsection{\texorpdfstring{Proof of \Cref{thm:convergence_robin}}{Proof of Theorem 6.2}}
\label{subsec:6.2}
\begin{proof}
Move all terms containing $u$ and $\frac{\partial u}{\partial \mathbf{n}}$ to the left hand side, we get
\begin{align}
	\frac{1}{\delta^2} \int_{\Omega} R_{\delta}(\mathbf{x}, \mathbf{y})(e_\delta(\mathbf{x})-e_\delta(\mathbf{y})) \mathrm{d} \mathbf{y} - 2 \int_{\partial \Omega} \bar{R}_{\delta}(\mathbf{x}, \mathbf{y}) \tilde{e}_\delta(\mathbf{y}) \mathrm{d} S_{\mathbf{y}} &= r_{\mathrm{in}}(\mathbf{x}), \quad \mathbf{x} \in \Omega \label{eq:r_in_robin}\\
	-\frac{1}{\delta^2} \int_{\Omega} \bar{R}_{\delta}(\mathbf{x}, \mathbf{y})(\mu\tilde{e}_\delta(\mathbf{x})+e_\delta(\mathbf{y})) \mathrm{d} \mathbf{y} - 2 \int_{\partial \Omega} \bar{\bar{R}}_{\delta}(\mathbf{x}, \mathbf{y}) \tilde{e}_\delta(\mathbf{x}) \mathrm{d} S_{\mathbf{y}} &= r_{\mathrm{bd}}(\mathbf{x}), \quad \mathbf{x} \in \partial \Omega \label{eq:r_bd_robin}
\end{align}
where for $\mathbf{x} \in \Omega$,
\[r_{\mathrm{in}}(\mathbf{x})=\frac{1}{\delta^2} \int_{\Omega} R_{\delta}(\mathbf{x}, \mathbf{y})(u(\mathbf{x})-u(\mathbf{y})) \mathrm{d} \mathbf{y}-2 \int_{\partial \Omega} \bar{R}_{\delta}(\mathbf{x}, \mathbf{y}) \frac{\partial u}{\partial \mathbf{n}}(\mathbf{y}) \mathrm{d} S_{\mathbf{y}} - \int_{\Omega} \bar{R}_{\delta}(\mathbf{x}, \mathbf{y}) f(\mathbf{y}) \mathrm{d} \mathbf{y}\]
and for $\mathbf{x} \in \partial \Omega$,
\begin{align*}
	& \quad r_{\mathrm{bd}}(\mathbf{x}) \\
	& = -\frac{1}{\delta^2} \int_{\Omega} \bar{R}_{\delta}(\mathbf{x}, \mathbf{y})(\mu \frac{\partial u}{\partial \mathbf{n}}(\mathbf{x})+ u(\mathbf{y})) \mathrm{d} \mathbf{y}-2 \int_{\partial \Omega} \bar{\bar{R}}_{\delta}(\mathbf{x}, \mathbf{y})  \frac{\partial u}{\partial \mathbf{n}}(\mathbf{x}) \mathrm{d} S_{\mathbf{y}} -\int_{\Omega} \bar{\bar{R}}_{\delta}(\mathbf{x}, \mathbf{y}) f(\mathbf{y}) \mathrm{d} \mathbf{y} \\
	& = \frac{1}{\delta^2} \int_{\Omega} \bar{R}_{\delta}(\mathbf{x}, \mathbf{y})(u(\mathbf{x})- u(\mathbf{y})) \mathrm{d} \mathbf{y}-2 \int_{\partial \Omega} \bar{\bar{R}}_{\delta}(\mathbf{x}, \mathbf{y})  \frac{\partial u}{\partial \mathbf{n}}(\mathbf{x}) \mathrm{d} S_{\mathbf{y}} - \int_{\Omega} \bar{\bar{R}}_{\delta}(\mathbf{x}, \mathbf{y}) f(\mathbf{y}) \mathrm{d} \mathbf{y}
\end{align*}

The truncation errors in the domain and on the boundary are the same as those defined in \Cref{sec:convergence} (note that $u(\mathbf{x})=0$ in Dirichlet boundary condition). Thus the consistency results remain the same, we only need to focus on the stability part.

Multiply \eqref{eq:r_in_robin} by $e_\delta(\mathbf{x})$ and integrate over $\Omega$, multiply \eqref{eq:r_bd_robin} by $\tilde{e}_\delta(\mathbf{x})$ and integrate over $\partial \Omega$, multiply the second with $-2\delta^2$ and add two equations, we get
\begin{align*}
    &\quad \frac{1}{2\delta^2}\int_{\Omega} \int_{\Omega} R_{\delta}(\mathbf{x}, \mathbf{y})(e_\delta(\mathbf{x})-e_\delta(\mathbf{y}))^2 \mathrm{d} \mathbf{x} \mathrm{d} \mathbf{y} + 4\delta^2 \int_{\partial \Omega} \hat{w}_\delta(\mathbf{x}) \tilde{e}_\delta(\mathbf{x})^2 \mathrm{d} S_\mathbf{x}\\ &=\int_{\Omega}e_\delta(\mathbf{x})r_{\mathrm{in}}(\mathbf{x}) \mathrm{d} \mathbf{x}-2\delta^2\int_{\partial \Omega}\tilde{e}_\delta(\mathbf{x})r_{\mathrm{bd}}(\mathbf{x}) \mathrm{d} S_\mathbf{x}
\end{align*}

here we use the notation defined in the proof of \Cref{thm:wellposed_robin}, $\hat{w}_\delta(\mathbf{x})=\bar{\bar{w}}_\delta(\mathbf{x})+\frac{\mu}{2\delta^2}\bar{w}_\delta(\mathbf{x})$. Similarly, we will replace all $\bar{\bar{w}}_\delta(\mathbf{x})$ with $\hat{w}_\delta(\mathbf{x})$. But different from \Cref{thm:wellposed_robin}, now the order of $\delta$ matters since $\delta$ is no longer fixed, so we should be careful when doing substitution. We will enumerate all the estimation containing the newly introduced term $\hat{w}_\delta(\mathbf{x})$, and check the inequalities. For notation simplicity, we define the smoothed version of $e_\delta$
\[\hat{e}_\delta(\mathbf{x})= \frac{1}{\bar{w}_\delta(\mathbf{x})}\int_{\Omega} \bar{R}_\delta(\mathbf{x}, \mathbf{y}) e_\delta(\mathbf{y})\mathrm{d} \mathbf{y}\]

\textbf{(1)} When controlling $\|e_\delta\|_{L^2(\Omega)}$, similar to \Cref{eq:coercive_second_term2},
\begin{align*}
	& \frac{1}{4\delta^2}\int_{\partial \Omega} \frac{1}{\hat{w}_\delta(\mathbf{x})} \left(\int_{\Omega} \bar{R}_\delta(\mathbf{x}, \mathbf{y}) e_\delta(\mathbf{y})\mathrm{d} \mathbf{y}\right)^2 \mathrm{d} S_\mathbf{x} \\
	= {}& \frac{1}{4\delta^2}\int_{\partial \Omega} \frac{\bar{w}_\delta^2(\mathbf{x})}{\hat{w}_\delta(\mathbf{x})} \left(\frac{1}{\bar{w}_\delta(\mathbf{x})}\int_{\Omega} \bar{R}_\delta(\mathbf{x}, \mathbf{y}) e_\delta(\mathbf{y})\mathrm{d} \mathbf{y}\right)^2 \mathrm{d} S_\mathbf{x} \\
	= {}& \frac{1}{4\delta^2}\int_{\partial \Omega} \frac{\bar{w}_\delta^2(\mathbf{x})}{\hat{w}_\delta(\mathbf{x})} \hat{e}_\delta^2(\mathbf{x})\mathrm{d} S_\mathbf{x} \geq C\|\hat{e}_\delta(\mathbf{x})\|^2_{L^2(\partial\Omega)}
\end{align*}
where difference lies in last inequality. Here the inequality becomes tight, while in original inference we only use $\frac{1}{\delta} \geq 1$,

\textbf{(2)} When controlling $\|\nabla e_\delta\|_{L^2(\Omega)}$, similar to \Cref{eq:h1_gradient_core}
\begin{align*}
	\|\nabla e_\delta(\mathbf{x})\|_{L^2(\Omega)}^2 & \leq C\|\nabla\big( \frac{1}{w_\delta(\mathbf{x})}\int_{\Omega} R_{\delta}(\mathbf{x}, \mathbf{y})e_\delta(\mathbf{y}) \mathrm{d} \mathbf{y}  \big) \|_{L^2(\Omega)}^2\\
	& \quad +C\|\nabla\big( \frac{1}{w_\delta(\mathbf{x})}\int_{\partial \Omega} \frac{\bar{R}_{\delta}(\mathbf{x}, \mathbf{y}) \bar{w}_\delta(\mathbf{y}) }{\hat{w}_\delta(\mathbf{y})} \hat{e}_\delta(\mathbf{y}) \mathrm{d} S_{\mathbf{y}} \big) \|_{L^2(\Omega)}^2\\
	& \quad +C\|\nabla\big( \frac{\delta^2}{w_\delta(\mathbf{x})}\int_{\partial \Omega} \frac{\bar{R}_{\delta}(\mathbf{x}, \mathbf{y})}{\hat{w}_\delta(\mathbf{y})} r_{\mathrm{bd}}(\mathbf{y}) \mathrm{d} S_{\mathbf{y}} \big) \|_{L^2(\Omega)}^2\\
	& \quad +C\|\nabla\big( \frac{\delta^2}{w_\delta(\mathbf{x})}r_{\mathrm{in}}(\mathbf{x}) \big) \|_{L^2(\Omega)}^2		
\end{align*}

We will deal with the two terms containing $\hat{w}_\delta(\mathbf{x})$.
\begin{align*}
	& \left\|\nabla\big( \frac{1}{w_\delta(\mathbf{x})}\int_{\partial \Omega} \frac{\bar{R}_{\delta}(\mathbf{x}, \mathbf{y}) \bar{w}_\delta(\mathbf{y}) }{\hat{w}_\delta(\mathbf{y})} \hat{e}_\delta(\mathbf{y}) \mathrm{d} S_{\mathbf{y}} \big) \right\|_{L^2(\Omega)}^2 \\
	\leq {}& 2\left\|\frac{\nabla w_\delta(\mathbf{x})}{w^2_\delta(\mathbf{x})}\int_{\partial \Omega} \frac{\bar{R}_{\delta}(\mathbf{x}, \mathbf{y}) \bar{w}_\delta(\mathbf{y}) }{\hat{w}_\delta(\mathbf{y})} \hat{e}_\delta(\mathbf{y}) \mathrm{d} S_{\mathbf{y}} \right\|_{L^2(\Omega)}^2 \\
	& \hspace{150pt} + 2\left\| \frac{1}{w_\delta(\mathbf{x})}\int_{\partial \Omega} \frac{\nabla\bar{R}_{\delta}(\mathbf{x}, \mathbf{y}) \bar{w}_\delta(\mathbf{y}) }{\hat{w}_\delta(\mathbf{y})} \hat{e}_\delta(\mathbf{y}) \mathrm{d} S_{\mathbf{y}}  \right\|_{L^2(\Omega)}^2
\end{align*}

The first term
\begin{align*}
	& \left\|\frac{\nabla w_\delta(\mathbf{x})}{w^2_\delta(\mathbf{x})}\int_{\partial \Omega} \frac{\bar{R}_{\delta}(\mathbf{x}, \mathbf{y}) \bar{w}_\delta(\mathbf{y})}{\hat{w}_\delta(\mathbf{y})} \hat{u}_\delta(\mathbf{y}) \mathrm{d} S_{\mathbf{y}} \right\|_{L^2(\Omega)}^2 \\
	\leq {}& \frac{C_1}{\delta^2} \int_{\Omega} \left(\int_{\partial \Omega} \frac{\bar{R}_{\delta}(\mathbf{x}, \mathbf{y}) \bar{w}_\delta(\mathbf{y})}{\hat{w}_\delta(\mathbf{y})} \hat{u}_\delta(\mathbf{y}) \mathrm{d} S_{\mathbf{y}} \right)^2 \mathrm{d} \mathbf{x}\\
	\leq {}& C_2\delta^2 \int_{\Omega} \left(\int_{\partial \Omega} \bar{R}_{\delta}(\mathbf{x}, \mathbf{y}) \hat{u}_\delta(\mathbf{y}) \mathrm{d} S_{\mathbf{y}} \right)^2 \mathrm{d} \mathbf{x} \\
	\leq {}& C_3\delta \int_{\partial \Omega} \hat{u}^2_\delta(\mathbf{y}) \mathrm{d} S_{\mathbf{y}} \leq \frac{C_4}{\delta^2}\int_{\partial \Omega} \frac{\bar{w}_\delta^2(\mathbf{x})}{\hat{w}_\delta(\mathbf{x})} \hat{u}_\delta^2(\mathbf{x})\mathrm{d} S_\mathbf{x}\\
	= {} &\frac{C_4}{\delta^2}\int_{\partial \Omega} \frac{1}{\hat{w}_\delta(\mathbf{x})} \left(\int_{\Omega} \bar{R}_\delta(\mathbf{x}, \mathbf{y}) u_\delta(\mathbf{y})\mathrm{d} \mathbf{y}\right)^2 \mathrm{d} S_\mathbf{x}
\end{align*}

Compared with \Cref{eq:h1_gradeint_second_term}, the inequality becomes loose: we use $\delta \leq 1$ in the last inequality. And the second term above can be checked similarly. The other term containing $\hat{w}_\delta(\mathbf{x})$ can be bounded by
\[\left\|\nabla\left( \frac{\delta^2}{w_\delta(\mathbf{x})}\int_{\partial \Omega} \frac{\bar{R}_{\delta}(\mathbf{x}, \mathbf{y})}{\hat{w}_\delta(\mathbf{y})} r_{\mathrm{bd}}(\mathbf{y}) \mathrm{d} S_{\mathbf{y}} \right) \right\|_{L^2(\Omega)}^2 \leq C\delta^5 \left\| r_{\mathrm{bd}}(\mathbf{x}) \right\|_{L^2(\partial \Omega)}^2\]
where in Dirichlet condition the order is $O(\delta^3)$.

Finally, the convergence order can be derived
\begin{align*}
	& \quad \|e_\delta\|_{H^1(\Omega)}^2 \\
	\leq {}& \frac{C}{2\delta^2} \int_{\Omega} \int_{\Omega} R_{\delta}(\mathbf{x}, \mathbf{y})(e_\delta(\mathbf{x})-e_\delta(\mathbf{y}))^2 \mathrm{d} \mathbf{x} \mathrm{d} \mathbf{y} + \frac{C}{2\delta^2}\int_{\partial \Omega} \frac{1}{\hat{w}_\delta(\mathbf{x})} \left(\int_{\Omega} \bar{R}_\delta(\mathbf{x}, \mathbf{y}) e_\delta(\mathbf{y})\mathrm{d} \mathbf{y}\right)^2 \mathrm{d} S_\mathbf{x} \\
	& \vphantom{\left(\int_{\Omega} \bar{R}_\delta(\mathbf{x}, \mathbf{y}) \right)^2} + C\delta^5 \| r_{\mathrm{bd}}(\mathbf{x}) \|_{L^2(\partial \Omega)}^2 + C\delta^2 \| r_{\mathrm{in}}(\mathbf{x}) \|_{L^2(\Omega)}^2 + C\delta^4 \| \nabla r_{\mathrm{in}}(\mathbf{x}) \|_{L^2(\Omega)}^2\\
	\leq {} & C\int_{\Omega} e_\delta(\mathbf{x}) r_{\mathrm{it}}(\mathbf{x}) \mathrm{d} \mathbf{x} + C\int_{\Omega} e_\delta(\mathbf{x}) r_{\mathrm{bl}}(\mathbf{x}) \mathrm{d} \mathbf{x} + C\delta^3 \| r_{\mathrm{bd}}(\mathbf{x}) \|_{L^2(\partial \Omega)}^2\\
	& \vphantom{\int} + C \delta^5 \left\|r_{\mathrm{bd}}(\mathbf{x})\right\|_{L^{2}(\partial \Omega)}^2 + C\delta^2 \| r_{\mathrm{in}}(\mathbf{x}) \|_{L^2(\Omega)}^2 + C\delta^4 \| \nabla r_{\mathrm{in}}(\mathbf{x}) \|_{L^2(\Omega)}^2\\
	\leq {}& \vphantom{\int} C\|e_\delta\|_{L^2(\Omega)} \|r_{\mathrm{it}}(\mathbf{x})\|_{L^2(\Omega)}+C\delta\|e_\delta\|_{H^1(\Omega)}\|u\|_{H^{3}(\Omega)}\\
	& \vphantom{\int} + C\delta^3\|u\|_{H^{3}(\Omega)}^2 + C\delta^4\|u\|_{H^{3}(\Omega)}^2 + C\delta^4\|u\|_{H^{3}(\Omega)}^2 \\
	\leq {} & \vphantom{\int} C\delta\|e_\delta\|_{H^1(\Omega)} \|u\|_{H^{3}(\Omega)}+ C\delta^3 \|u\|_{H^{3}(\Omega)}^2
\end{align*}

Thus
\[\left\|e_\delta\right\|_{H^1(\Omega)}\leq C\delta\left\|u\right\|_{H^{3}(\Omega)} \]

Moreover, the convergence rate of $v_\delta$ can be improved
\begin{align*}
	&\vphantom{\int} \quad C\|\tilde{e}_\delta\|_{L^2(\partial \Omega)}^2 \leq 4\delta^2 \int_{\partial \Omega}  \hat{w}_\delta(\mathbf{x}) \tilde{e}_\delta(\mathbf{x})^2 \mathrm{d} S_\mathbf{x} \leq \int_{\Omega}e_\delta(\mathbf{x})r_{\mathrm{in}}(\mathbf{x}) \mathrm{d} \mathbf{x} - 2\delta^2\int_{\partial \Omega}\tilde{e}_\delta(\mathbf{x})r_{\mathrm{bd}}(\mathbf{x}) \mathrm{d} S_\mathbf{x} \\
	& \leq \left|\int_{\Omega} e_\delta(\mathbf{x}) r_{\mathrm{it}}(\mathbf{x}) \mathrm{d} \mathbf{x} \right|+ \left|\int_{\Omega} e_\delta(\mathbf{x}) r_{\mathrm{bl}}(\mathbf{x}) \mathrm{d} \mathbf{x} \right|+ 2 \delta^2 \left\|r_{\mathrm{bd}}(\mathbf{x})\right\|_{L^{2}(\partial \Omega)}\left\|\tilde{e}_\delta\right\|_{L^{2}(\partial \Omega)} \\
	& \vphantom{\int} \leq C\|e_\delta\|_{L^2(\Omega)} \|r_{\mathrm{it}}(\mathbf{x})\|_{L^2(\Omega)}+C\delta\|e_\delta\|_{H^1(\Omega)}\|u\|_{H^{3}(\Omega)} + 2 \delta^2 \left\|r_{\mathrm{bd}}(\mathbf{x})\right\|_{L^{2}(\partial \Omega)}\left\|\tilde{e}_\delta\right\|_{L^{2}(\partial \Omega)} \\
	& \vphantom{\int} \leq C \delta^2 \|u\|_{H^{3}(\Omega)}^2 + C \delta^2 \|u\|_{H^{3}(\Omega)}\left\|\tilde{e}_\delta\right\|_{L^{2}(\partial \Omega)}
\end{align*}

It gives us
\[\left\|\tilde{e}_\delta\right\|_{L^{2}(\partial \Omega)} \leq C \delta \|u\|_{H^{3}(\Omega)} \]
which is slightly better than $O(\delta^{\frac{1}{2}})$
\end{proof}
\section{\texorpdfstring{Proof in \Cref{sec:spectra}}{Proof in Section 7}}

\subsection{\texorpdfstring{Proof of \Cref{prop:operators}}{Proof of Proposition 7.1}}
\label{subsec:7.2}

\begin{proof}
For $T$ the conclusion is well known. To show the compactness of $T_\delta$, we notice that we can rewrite $T_\delta u_\delta(\mathbf{x})$ as in \Cref{subsec:h1_estimation}
\begin{align*}
	\vphantom{\int}T_\delta u_\delta(\mathbf{x})= & \frac{1}{w_\delta(\mathbf{x})}\int_{\Omega} R_{\delta}(\mathbf{x}, \mathbf{y})T_\delta u_\delta(\mathbf{y}) \mathrm{d} \mathbf{y} - \frac{1}{w_\delta(\mathbf{x})}\int_{\partial \Omega} \frac{\bar{R}_{\delta}(\mathbf{x}, \mathbf{y})}{\bar{\bar{w}}_\delta(\mathbf{y})}\int_{\Omega} \bar{R}_\delta(\mathbf{y}, \mathbf{s}) T_\delta u_\delta(\mathbf{s})\mathrm{d} \mathbf{s} \mathrm{d} S_{\mathbf{y}} \\
	& + \frac{\delta^2}{w_\delta(\mathbf{x})}\int_{\Omega} \bar{R}_{\delta}(\mathbf{x}, \mathbf{y}) u_\delta(\mathbf{y}) \mathrm{d} \mathbf{y}
\end{align*}

With \Cref{assumption:kernel}(a), $R \in C^1$. Then, direct calculation gives that that $T_{\delta} u_\delta \in C^1$. This implies the compactness of $T_\delta$ in $H^1$.

Let $\lambda$ be an eigenvalue of $T_\delta$ and $u_\delta$ is the corresponding eigenfunction. We can eliminate $v_\delta$ in the first equation using \Cref{eq:v_eliminate} and get
\begin{align*}
	&\lambda \left(\frac{1}{\delta^2}\int_{\Omega} R_{\delta}(\mathbf{x}, \mathbf{y})(u_\delta(\mathbf{x})-u_\delta(\mathbf{y})) \mathrm{d} \mathbf{y}+\frac{1}{\delta^2}\int_{\partial \Omega} \frac{\bar{R}_{\delta}(\mathbf{x}, \mathbf{y})}{\bar{\bar{w}}_\delta(\mathbf{y})} \int_{\Omega} \bar{R}_{\delta}(\mathbf{y}, \mathbf{s}) u_\delta(\mathbf{s}) \mathrm{d} \mathbf{s} \mathrm{d} S_\mathbf{y} \right) \numberthis \label{eq:spectral_lambda}\\
	=& \frac{1}{\delta^2}\int_{\Omega} R_{\delta}(\mathbf{x}, \mathbf{y})(T_\delta u_\delta(\mathbf{x})-T_\delta u_\delta(\mathbf{y})) \mathrm{d} \mathbf{y}+\frac{1}{\delta^2}\int_{\partial \Omega} \frac{\bar{R}_{\delta}(\mathbf{x}, \mathbf{y})}{\bar{\bar{w}}_\delta(\mathbf{y})} \int_{\Omega} \bar{R}_{\delta}(\mathbf{y}, \mathbf{s}) T_\delta u_\delta(\mathbf{s}) \mathrm{d} \mathbf{s} \mathrm{d} S_\mathbf{y} \\
	=& \int_{\Omega}  \bar{R}_{\delta}(\mathbf{x}, \mathbf{y}) u_\delta(\mathbf{y})\mathrm{d} \mathbf{y}
\end{align*}

Multiply $u_\delta^*(\mathbf{x})$, which is the complex conjugate of $u_\delta(\mathbf{x})$, to both sides and integrate over $\Omega$. Then using symmetry of the kernel functions, we can get
\[\left( \int_{\Omega} u_\delta^*(\mathbf{x}) \int_{\Omega} R_{\delta}(\mathbf{x}, \mathbf{y})u_\delta(\mathbf{y}) \mathrm{d} \mathbf{y} \mathrm{d} \mathbf{x} \right)^*= \int_{\Omega} u_\delta^*(\mathbf{x}) \int_{\Omega} R_{\delta}(\mathbf{x}, \mathbf{y})u_\delta(\mathbf{y}) \mathrm{d} \mathbf{y} \mathrm{d} \mathbf{x} \]
and,
\begin{align*}
	&\left( \int_{\Omega} u_\delta^*(\mathbf{x}) \int_{\Omega} R_{\delta}(\mathbf{x}, \mathbf{y})(u_\delta(\mathbf{x})-u_\delta(\mathbf{y})) \mathrm{d} \mathbf{y} \mathrm{d} \mathbf{x} \right)^*\\
	=& \int_{\Omega} u_\delta(\mathbf{x}) \int_{\Omega} R_{\delta}(\mathbf{x}, \mathbf{y})(u_\delta^*(\mathbf{x})-u_\delta^*(\mathbf{y})) \mathrm{d} \mathbf{y} \mathrm{d} \mathbf{x} \\
	=& \int_{\Omega} u_\delta^*(\mathbf{x}) \int_{\Omega} R_{\delta}(\mathbf{x}, \mathbf{y})(u_\delta(\mathbf{x})-u_\delta(\mathbf{y})) \mathrm{d} \mathbf{y} \mathrm{d} \mathbf{x}
\end{align*}
and,
\begin{align*}
	&\left(\int_{\Omega} u_\delta^*(\mathbf{x}) \int_{\partial \Omega} \frac{\bar{R}_{\delta}(\mathbf{x}, \mathbf{y})}{\bar{\bar{w}}_\delta(\mathbf{y})} \int_{\Omega} \bar{R}_{\delta}(\mathbf{y}, \mathbf{s}) u_\delta(\mathbf{s}) \mathrm{d} \mathbf{s} \mathrm{d} S_\mathbf{y} \mathrm{d} \mathbf{x}\right)^*\\
	=&\int_{\Omega} u_\delta(\mathbf{x}) \int_{\partial \Omega} \frac{\bar{R}_{\delta}(\mathbf{x}, \mathbf{y})}{\bar{\bar{w}}_\delta(\mathbf{y})} \int_{\Omega} \bar{R}_{\delta}(\mathbf{y}, \mathbf{s}) u_\delta^*(\mathbf{s}) \mathrm{d} \mathbf{s} \mathrm{d} S_\mathbf{y} \mathrm{d} \mathbf{x}\\
	=& \int_{\partial \Omega} \frac{1}{\bar{\bar{w}}_\delta(\mathbf{y})} \left(\int_{\Omega} \bar{R}_{\delta}(\mathbf{x}, \mathbf{y}) u_\delta(\mathbf{x}) \mathrm{d} \mathbf{x} \right)\left( \int_{\Omega} \bar{R}_{\delta}(\mathbf{y}, \mathbf{s}) u_\delta^*(\mathbf{s}) \mathrm{d} \mathbf{s} \right) \mathrm{d} S_\mathbf{y}\\
	=& \int_{\partial \Omega} \frac{1}{\bar{\bar{w}}_\delta(\mathbf{y})} \left(\int_{\Omega} \bar{R}_{\delta}(\mathbf{x}, \mathbf{y}) u_\delta^*(\mathbf{x}) \mathrm{d} \mathbf{x} \right)\left( \int_{\Omega} \bar{R}_{\delta}(\mathbf{y}, \mathbf{s}) u_\delta(\mathbf{s}) \mathrm{d} \mathbf{s} \right) \mathrm{d} S_\mathbf{y}\\
	=& \int_{\Omega} u_\delta^*(\mathbf{x}) \int_{\partial \Omega} \frac{\bar{R}_{\delta}(\mathbf{x}, \mathbf{y})}{\bar{\bar{w}}_\delta(\mathbf{y})} \int_{\Omega} \bar{R}_{\delta}(\mathbf{y}, \mathbf{s}) u_\delta(\mathbf{s}) \mathrm{d} \mathbf{s} \mathrm{d} S_\mathbf{y} \mathrm{d} \mathbf{x}
\end{align*}

Thus we get $\lambda\in \mathbb{R}$.

Now, we turn to study the eigenfunctions. Let $u_\delta$ be a generalized eigenfunction of $T_\delta$ with multiplicity $m>1$ associate with eigenvalue $\lambda$. Let $p_\delta=\left(T_\delta-\lambda\right)^{m-1} u_\delta, q_\delta=\left(T_\delta-\lambda\right)^{m-2} u_\delta$, then $p_\delta$ is an eigenfunction of $T_\delta$ and
\[T_\delta p_\delta=\lambda p_\delta, \quad\left(T_\delta-\lambda\right) q_\delta=p_\delta\]

Denote $L_\delta$ as
\[L_\delta f(\mathbf{x})= \frac{1}{\delta^2}\int_{\Omega} R_{\delta}(\mathbf{x}, \mathbf{y})(f(\mathbf{x})-f(\mathbf{y})) \mathrm{d} \mathbf{y}+\frac{1}{\delta^2}\int_{\partial \Omega} \frac{\bar{R}_{\delta}(\mathbf{x}, \mathbf{y})}{\bar{\bar{w}}_\delta(\mathbf{y})} \int_{\Omega} \bar{R}_{\delta}(\mathbf{y}, \mathbf{s}) f(\mathbf{s}) \mathrm{d} \mathbf{s} \mathrm{d} S_\mathbf{y}\]

By applying $L_\delta$ on both sides of above two equations and use \Cref{eq:spectral_lambda}, we have
\begin{align*}
	\lambda L_\delta p_\delta &=L_\delta\left(T_\delta p_\delta \right)=\int_{\Omega} \bar{R}_\delta(\mathbf{x}, \mathbf{y}) p_\delta(\mathbf{y}) \mathrm{d} \mathbf{y} \\
	L_\delta p_\delta &=L_\delta\left(T_\delta q_\delta \right)-\lambda L_\delta q_\delta=\int_{\Omega} \bar{R}_\delta(\mathbf{x}, \mathbf{y}) q_\delta(\mathbf{y}) \mathrm{d} \mathbf{y}-\lambda L_\delta q_\delta
\end{align*}

Using above two equations, and use symmetry of kernel functions again, we get
\begin{align*}
	0 &=\left\langle q_\delta, \lambda L_\delta p_\delta -\int_{\Omega} \bar{R}_\delta(\mathbf{x}, \mathbf{y}) p_\delta (\mathbf{y}) \mathrm{d} \mathbf{y}\right \rangle \\
	&=\left\langle\lambda L_\delta q_\delta-\int_{\Omega} \bar{R}_\delta(\mathbf{x}, \mathbf{y}) q_\delta (\mathbf{y}) \mathrm{d} \mathbf{y}, p_\delta \right\rangle \\
	& \vphantom{\int}= - \left\langle L_\delta p_\delta, p_\delta \right\rangle
\end{align*}

Moreover, we have proved in \Cref{prop:coercive} that
\[\left\langle L_\delta p_\delta, p_\delta \right\rangle \geq C \|p_\delta\|_{L^2(\Omega)}^2\]

which implies that $\left(T_\delta-\lambda\right)^{m-1} u_\delta = p_\delta =0$. This proves that $u$ is a generalized eigenfunction of $T_\delta$ with multiplicity $m-1$. Repeating this process, we can show that $u$ is actually an eigenfunction of $T_\delta$.
\end{proof}

\subsection{\texorpdfstring{Proof of \Cref{thm:spectra_eigenfunction}}{Proof of Theorem 7.2}}
\label{subsec:7.4}

We need some results regarding the perturbation of the compact operators.


\begin{theorem}[\cite{atkinson1967numerical}]
    \label{thm:spectra_perturbation}
    Let $\left(X,\|\cdot\|_{X}\right)$ be an arbitrary Banach space. Let $S$ and $T$ be compact linear operators on $X$ into $X$. Let $z_{0} \in \mathbb{C}, z_{0} \neq 0$ and let $\epsilon>0$ be less than $\left|z_{0}\right|$, denote the circumference $\left|z-z_{0}\right|=\epsilon$ by $\Gamma$ and assume $\Gamma \subset \rho(T)$. Denote the interior of $\Gamma$ by $U$. Let $\sigma_{T}=U \cap \sigma(T) \neq \emptyset . \sigma_{S}=U \cap \sigma(S)$. Let $E\left(\sigma_{S}, S\right)$ and $E\left(\sigma_{T}, T\right)$ be the corresponding spectral projections of $S$ for $\sigma_{S}$ and $T$ for $\sigma_{T}$, i.e.
    \[E\left(\sigma_{S}, S\right)=\frac{1}{2 \pi i} \int_{\Gamma}(z-S)^{-1} \mathrm{~d} z, \quad E\left(\sigma_{T}, T\right)=\frac{1}{2 \pi i} \int_{\Gamma}(z-T)^{-1} \mathrm{~d} z\]
    Assume
    \[\|(T-S) S\|_{X} \leq \min _{z \in \Gamma} \frac{|z|}{\left\|(z-T)^{-1}\right\|_{X}}\]
    Then, we have
    
    (1). Dimension $E\left(\sigma_{S}, S\right) X=E\left(\sigma_{T}, T\right) X$, thereby $\sigma_{S}$ is nonempty and of the same multiplicity as $\sigma_{T}$.
    
    (2). For every $x \in X$,
    \[\left\|E\left(\sigma_{T}, T\right) x-E\left(\sigma_{S}, S\right) x\right\|_{X} \leq \frac{2 M \epsilon}{c_{0}}\left(\|(T-S) x\|_{X}+ M \|x\|_{X}\|(T-S) S\|_{X}\right)\]
    where $M=\max _{z \in \Gamma}\left\|(z-T)^{-1}\right\|_{X}, c_{0}=\min _{z \in \Gamma}|z|$.
\end{theorem}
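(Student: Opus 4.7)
The plan is to follow Atkinson's classical perturbation argument, which couples the Riesz spectral projection formulas with a carefully tailored resolvent identity that isolates the composite $(T-S)S$ rather than the potentially much larger quantity $T-S$. Setting $R_T(z)=(z-T)^{-1}$ and $R_S(z)=(z-S)^{-1}$, I start from the second resolvent identity $R_S(z)-R_T(z)=R_T(z)(T-S)R_S(z)$ and substitute $R_S(z)=\tfrac{1}{z}(I+SR_S(z))$ on the right to pull out a factor of $S$. This yields the central decomposition
\[
R_S(z)-R_T(z)=\tfrac{1}{z}R_T(z)(T-S)+\tfrac{1}{z}R_T(z)(T-S)S\,R_S(z),
\]
which rearranges to the operator equation
\[
\bigl(I-\tfrac{1}{z}R_T(z)(T-S)S\bigr)\bigl(R_S(z)-R_T(z)\bigr)=\tfrac{1}{z}R_T(z)(T-S)+\tfrac{1}{z}R_T(z)(T-S)S\,R_T(z).
\]

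The first step is to verify $\Gamma\subset\rho(S)$ and bound $\|R_S(z)\|_X$ uniformly on $\Gamma$. The hypothesis $\|(T-S)S\|_X\cdot\|R_T(z)\|_X\le |z|$ gives $\|\tfrac{1}{z}R_T(z)(T-S)S\|_X\le 1$; after a harmless shrinkage of $\Gamma$ (keeping $\Gamma\subset\rho(T)$ and turning the inequality strict, say $\le 1/2$), the operator $I-\tfrac{1}{z}R_T(z)(T-S)S$ is invertible by Neumann series with norm at most $2$. This simultaneously confirms that $z-S$ is invertible for every $z\in\Gamma$, so the projection $E(\sigma_S,S)$ is well defined, and gives a closed-form expression for $R_S(z)-R_T(z)$ in terms of $R_T$, $(T-S)$, and $(T-S)S$.

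For conclusion (2), I apply the formula to $x$, integrate the identity $E(\sigma_T,T)-E(\sigma_S,S)=\frac{1}{2\pi i}\int_\Gamma(R_T-R_S)\,dz$ termwise, and take norms. Using $|\Gamma|=2\pi\epsilon$, $|z|\ge c_0$, $\|R_T(z)\|_X\le M$, the two contributions bound as $\tfrac{M\epsilon}{c_0}\|(T-S)x\|_X$ and $\tfrac{M^2\epsilon}{c_0}\|(T-S)S\|_X\|x\|_X$; absorbing the Neumann factor of $2$ into the leading constant produces exactly
\[
\|E(\sigma_T,T)x-E(\sigma_S,S)x\|_X\le\tfrac{2M\epsilon}{c_0}\bigl(\|(T-S)x\|_X+M\|x\|_X\|(T-S)S\|_X\bigr).
\]

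For conclusion (1), the ranges of both spectral projections are finite-dimensional because $T$ and $S$ are compact and $z_0\ne 0$. The classical Kato similarity construction, using $U=E_SE_T+(I-E_S)(I-E_T)$ and $V=E_TE_S+(I-E_T)(I-E_S)$ with $UV=VU=I-(E_T-E_S)^2$, shows that whenever $\|E_T-E_S\|_X<1$ the two projections are similar, hence of equal rank. The main technical obstacle is producing this operator-norm smallness from the hypothesis alone: the pointwise estimate in (2) involves $\|(T-S)x\|_X$, which is not controlled by $\|(T-S)S\|_X$ on arbitrary $x$. The resolution is to restrict to $x\in\mathrm{Ran}(E_T)$, where the representation $x=\frac{1}{2\pi i}\int_\Gamma R_T(z)x\,dz$ lets one re-express $(T-S)x$ through $R_T$ and then reduce it, via $(T-S)T=(T-S)S+(T-S)^2$, to terms bounded purely by $\|(T-S)S\|_X$ once the Neumann argument is re-invoked on the shrunken contour; this closes the rank-comparison argument without ever calling on $\|T-S\|_X$.
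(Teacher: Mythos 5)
A preliminary remark: the paper never proves this theorem --- it is imported verbatim from Atkinson (1967) and used as a black box in the proof of \cref{thm:spectra_eigenfunction} --- so there is no internal proof to compare against; what follows judges your argument on its own terms. Your algebraic starting point for part (2) is the right classical one: inserting $R_S(z)=\tfrac{1}{z}(I+SR_S(z))$ into the second resolvent identity so that the composite $(T-S)S$ appears adjacent, then writing $R_S=R_T+(R_S-R_T)$ and integrating over $\Gamma$; with a margin $\|R_T(z)\|\,\|(T-S)S\|\le\tfrac12 |z|$ on $\Gamma$ this yields exactly the constant $\tfrac{2M\epsilon}{c_0}$. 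But two steps do not close. First, the hypothesis only gives $\|\tfrac1z R_T(z)(T-S)S\|\le 1$, and your repair --- ``harmless shrinkage of $\Gamma$'' --- is not available: the hypothesis is tied to the original contour, on a smaller circle about $z_0$ the resolvent norm of $T$ is uncontrolled (it typically grows as the contour approaches $\sigma_T\subset U$), and the quantities $M,\epsilon,c_0$ in the asserted estimate refer to the original $\Gamma$, so nothing proved on a shrunken contour recovers the stated bound. In addition, your ``central decomposition'' presupposes $z\in\rho(S)$; invertibility of $I-\tfrac1z R_T(T-S)S$ does not by itself ``confirm'' invertibility of $z-S$ --- one must exhibit and verify a candidate inverse (or prove injectivity and invoke compactness of $S$ with the Fredholm alternative), which the proposal asserts rather than proves.

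Second, part (1). The claimed reduction of $(T-S)x$ on $\mathrm{Ran}(E_T)$ via $(T-S)T=(T-S)S+(T-S)^2$ does not work: $(T-S)^2$ is not controlled by $\|(T-S)S\|$, and no bound on $\|T-S\|$ is assumed; likewise Kato's similarity construction needs $\|E_T-E_S\|<1$ in operator norm, which the vectorwise estimate (2) does not supply. The obstruction is in fact intrinsic: conclusion (1) does not follow from the displayed hypothesis alone. On $\ell^2$ take $T=\mathrm{diag}(1,0,0,\dots)$, $S=\mathrm{diag}(2/5,0,0,\dots)$, $z_0=1$, $\epsilon=1/2$; then $\|(z-T)^{-1}\|=2$ on $\Gamma$, so $\min_{z\in\Gamma}|z|/\|(z-T)^{-1}\|=1/4$, while $\|(T-S)S\|=0.24\le 1/4$, yet $\sigma(S)\cap U=\emptyset$ although $\sigma_T=\{1\}$. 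Atkinson's actual result rests on stronger information (control of $(T-S)T$ as well, as supplied by collectively compact, pointwise convergent approximations); in the present paper's application this is harmless since $\|T-T_\delta\|_{H^1}\le C\delta$, but it means a proof of (1) that ``never calls on $\|T-S\|$'' or on $(T-S)T$, as you propose, cannot succeed.
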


To use the above theorem, we further need two lemma in \cite{tao2020convergence}.
\begin{lemma}[\cite{tao2020convergence}]
    \label{lemma:spectra}
    Let $T$ be the solution operator and $z \in \rho(T)$, then
    $$
    \left\|(z-T)^{-1}\right\|_{H^{1}} \leq \max _{n \in \mathbb{N}} \frac{1}{\left|z-\lambda_{n}\right|},
    $$
    where $\left\{\lambda_{n}\right\}_{n \in \mathbb{N}}$ is the set of eigenvalues of $T$.
\end{lemma}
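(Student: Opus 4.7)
The plan is to exploit the spectral theorem for $T$. Since $T$ is the inverse of the Dirichlet Laplacian on $\Omega$, it is a compact self-adjoint operator on $L^{2}(\Omega)$, so there is an $L^{2}$-orthonormal basis of eigenfunctions $\{\phi_{n}\}\subset H^{2}(\Omega)\cap H^{1}_{0}(\Omega)$ with $T\phi_{n}=\lambda_{n}\phi_{n}$ and eigenvalues $\lambda_{n}=1/\mu_{n}>0$, $\lambda_{n}\searrow 0$, where $-\Delta\phi_{n}=\mu_{n}\phi_{n}$. Integration by parts gives $(\phi_{n},\phi_{m})_{H^{1}_{0}}=\int_{\Omega}\nabla\phi_{n}\cdot\nabla\phi_{m}=\mu_{n}\delta_{nm}$, so $\{\phi_{n}\}$ is likewise orthogonal in $H^{1}_{0}(\Omega)$ equipped with the Dirichlet inner product, and $T$ remains compact and self-adjoint on this Hilbert space with the same spectrum.

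\textbf{Main step.} For $g\in H^{1}_{0}(\Omega)$ I would expand $g=\sum_{n}c_{n}\phi_{n}$ with $\|g\|_{H^{1}_{0}}^{2}=\sum_{n}|c_{n}|^{2}\mu_{n}$. Because $z\in\rho(T)$ forces $z\neq\lambda_{n}$ for every $n$, the resolvent acts diagonally,
\[
(z-T)^{-1}g \;=\; \sum_{n}\frac{c_{n}}{z-\lambda_{n}}\,\phi_{n},
\]
and computing the Dirichlet norm and pulling out the supremum yields
\[
\|(z-T)^{-1}g\|_{H^{1}_{0}}^{2}\;=\;\sum_{n}\frac{|c_{n}|^{2}\mu_{n}}{|z-\lambda_{n}|^{2}}\;\leq\;\Bigl(\max_{n}\frac{1}{|z-\lambda_{n}|}\Bigr)^{2}\|g\|_{H^{1}_{0}}^{2}.
\]
Equivalently, this is just the functional-calculus identity $\|f(T)\|=\sup_{n}|f(\lambda_{n})|$ applied with $f(\zeta)=(z-\zeta)^{-1}$ on the self-adjoint operator $T$. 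Since the $H^{1}$ and $H^{1}_{0}$ norms are equivalent on $H^{1}_{0}(\Omega)$, this delivers the stated inequality whenever the input already lies in $H^{1}_{0}$.

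\textbf{Passing from $H^{1}_{0}$ to $H^{1}$ and main obstacle.} The technical point is that a general $g\in H^{1}(\Omega)$ need not lie in $\overline{\mathrm{span}\{\phi_{n}\}}$ in $H^{1}$. To handle this I would split $u:=(z-T)^{-1}g$ as $u=g/z+w$. Since $Tu\in H^{2}\cap H^{1}_{0}$, the remainder $w=(u-g/z)=Tu/z$ automatically belongs to $H^{1}_{0}(\Omega)$, and subtracting $(z-T)(g/z)=g-Tg/z$ from $(z-T)u=g$ gives $(z-T)w=Tg/z$, an equation with right-hand side $Tg/z\in H^{1}_{0}$. Applying the previous paragraph to this equation, together with the elementary bound $\|Tg\|_{H^{1}_{0}}\lesssim\|g\|_{H^{1}}$ and the observation that $\lambda_{n}\to 0$ implies $\max_{n}|z-\lambda_{n}|^{-1}\geq |z|^{-1}$, controls the full norm $\|u\|_{H^{1}}\leq\|g\|_{H^{1}}/|z|+\|w\|_{H^{1}_{0}}$. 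The only delicate aspect is obtaining the exact constant $1$ in the bound rather than an absolute multiplicative constant; this is the point where one identifies the stated norm with the equivalent Dirichlet norm under which $T$ is genuinely self-adjoint, so that the sharp spectral-calculus estimate applies directly. Since the downstream use of this lemma in Theorem~\ref{thm:spectra_eigenfunction} only tracks the resolvent up to equivalent norms, this identification is harmless.
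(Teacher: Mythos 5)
Two remarks before the substance: the paper itself does not prove \cref{lemma:spectra} --- it is quoted from \cite{tao2020convergence} and used as a black box --- so the comparison here is with what a correct proof would require, not with an in-paper argument. Your main step is correct and in fact slightly stronger than you claim: for $u,v\in H^1_0(\Omega)$ one has $(Tu,v)_{H^1}=\int_\Omega uv+\int_\Omega (Tu)\,v=(u,Tv)_{H^1}$, so $T$ restricted to $H^1_0(\Omega)$ is self-adjoint in the \emph{standard} $H^1$ inner product (the eigenfunctions satisfy $(\phi_n,\phi_m)_{H^1}=(1+\mu_n)\delta_{nm}$), and the constant-one resolvent bound on $H^1_0$ follows from the diagonal computation with no norm replacement at all.

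The genuine gap is the passage to all of $H^1(\Omega)$. Your splitting $u=g/z+w$ with $w=(z-T)^{-1}(Tg/z)$ only yields $\|u\|_{H^1}\le |z|^{-1}\|g\|_{H^1}+C\,\max_n|z-\lambda_n|^{-1}\,|z|^{-1}\|g\|_{H^1}$, with $C$ coming from $\|Tg\|_{H^1_0}\lesssim\|g\|_{H^1}$ and Poincar\'e/norm-equivalence; that is a bound of the form $C\max_n|z-\lambda_n|^{-1}$, not the stated constant-one bound. Your fallback --- \emph{identify the stated norm with the equivalent Dirichlet norm} --- is not available on $H^1(\Omega)$: the eigenfunctions of $T$ span (in the $H^1$ topology) only the proper closed subspace $H^1_0(\Omega)$, so $T$ is not diagonalizable on $H^1(\Omega)$ and there is no equivalent inner product on the whole space in which it is self-adjoint; the sharp spectral-calculus identity simply does not apply there. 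Indeed, with the standard $H^1(\Omega)$ operator norm the constant-one inequality can fail near an eigenvalue: $(z-T)^{-1}=P_1/(z-\lambda_1)+O(1)$ as $z\to\lambda_1$, where $P_1g=\langle g,\phi_1\rangle_{L^2}\phi_1$, and testing $P_1$ against a constant function (which has a smaller $H^1$-to-$L^2$ ratio than $\phi_1$) gives, e.g., on an interval in one dimension $\|P_1\|_{H^1\to H^1}\approx 1.27>1$. So what your argument honestly delivers --- and what should be recorded --- is either the constant-one bound on $H^1_0(\Omega)$ or $\|(z-T)^{-1}\|_{H^1(\Omega)}\le C\max_n|z-\lambda_n|^{-1}$ on the full space; in the latter case the constant $C$ has to be carried through the hypothesis $\|(T-T_\delta)T_\delta\|_{H^1}\le(|\lambda_m|-\gamma_m/3)\gamma_m/3$ and the bounds in \cref{thm:spectra_eigenfunction}, which affects the explicit constants there but not the $O(\delta)$ convergence rates.
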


\begin{lemma}[\cite{tao2020convergence}]
	\label{lemma:spectra_eigenvalue}
	Let $T_\delta$ be the solution operator of the integral equation and $\lambda_{n}$ be eigenvalues of $T$, then
	\[\sigma\left(T_\delta\right) \subset \bigcup_{n \in \mathbb{N}} B\left(\lambda_{n}, 2\left\|T-T_\delta\right\|_{H^{1}}\right)\]
\end{lemma}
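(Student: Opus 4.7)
The plan is to prove the lemma by contrapositive: if a complex number $z$ lies outside every ball $B(\lambda_n, 2\|T-T_\delta\|_{H^1})$, then I will show that $z \in \rho(T_\delta)$, i.e., $z - T_\delta$ is bijective. This directly yields the claimed inclusion for the spectrum of $T_\delta$.

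First I would argue that under the standing assumption $\min_{n} |z - \lambda_n| > 2\|T - T_\delta\|_{H^1}$, the hypothesis of \cref{lemma:spectra} is met: in particular $z$ is not an eigenvalue of $T$, and since $T$ is compact on $H^1(\Omega)$ (by \cref{prop:operators}), the only possible spectral point other than eigenvalues would be $0$, but $0$ would itself be covered by one of the balls (since $\lambda_n \to 0$ forces $|\lambda_n| < 2\|T-T_\delta\|_{H^1}$ for large $n$, so any $z$ close to $0$ lies inside such a ball). Hence $z \in \rho(T)$ and \cref{lemma:spectra} gives the resolvent estimate
\[
\left\|(z-T)^{-1}\right\|_{H^1} \;\le\; \max_{n} \frac{1}{|z-\lambda_n|} \;<\; \frac{1}{2\left\|T-T_\delta\right\|_{H^1}}.
\]

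The next step is the standard resolvent perturbation factorization
\[
z - T_\delta \;=\; (z-T) - (T_\delta - T) \;=\; (z-T)\bigl(I + (z-T)^{-1}(T - T_\delta)\bigr).
\]
Using the resolvent bound just obtained, I estimate
\[
\left\|(z-T)^{-1}(T-T_\delta)\right\|_{H^1} \;\le\; \left\|(z-T)^{-1}\right\|_{H^1}\left\|T-T_\delta\right\|_{H^1} \;<\; \tfrac{1}{2} \;<\; 1.
\]
Therefore the Neumann series $\sum_{k\ge 0}\bigl(-(z-T)^{-1}(T-T_\delta)\bigr)^{k}$ converges in the operator norm on $H^1(\Omega)$, so $I + (z-T)^{-1}(T-T_\delta)$ is boundedly invertible. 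Since $(z-T)$ is also invertible, the factorization shows $z - T_\delta$ is invertible, i.e.\ $z \in \rho(T_\delta)$.

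The main obstacle I anticipate is simply the bookkeeping around the point $0$: $T$ is compact so $0$ may belong to $\sigma(T)\setminus\{\lambda_n\}_n$, and one needs to make sure that covering $\sigma(T_\delta)$ by balls around the eigenvalues $\lambda_n$ (rather than around all of $\sigma(T)$) is genuinely enough. This is handled precisely by the accumulation $\lambda_n \to 0$, as noted above, so no extra ball around $0$ is needed. Aside from that, the argument is a textbook Neumann-series perturbation argument, with \cref{lemma:spectra} supplying the one nonstandard ingredient, namely the explicit $H^1$ resolvent bound in terms of the spectral gap $\min_n |z - \lambda_n|$.
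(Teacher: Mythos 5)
Your argument is correct. Note that the paper itself does not prove this lemma --- it is quoted from \cite{tao2020convergence} without proof --- so what you have written is a self-contained justification of the citation, and it is the standard one: take $z$ outside every ball, use \cref{lemma:spectra} to get $\left\|(z-T)^{-1}\right\|_{H^1}\le \bigl(2\left\|T-T_\delta\right\|_{H^1}\bigr)^{-1}$, and invert $z-T_\delta=(z-T)\bigl(I+(z-T)^{-1}(T-T_\delta)\bigr)$ by a Neumann series since the perturbation has norm at most $1/2$. Your handling of the point $0$ is the one genuinely nonroutine step and you treat it correctly: for a compact operator the nonzero spectrum consists of eigenvalues (cf.\ \cref{prop:operators}), and since the Dirichlet solution operator $T$ has infinitely many eigenvalues accumulating at $0$, the origin is automatically inside some ball $B\bigl(\lambda_n,2\|T-T_\delta\|_{H^1}\bigr)$, so a $z$ outside all balls is nonzero and hence in $\rho(T)$. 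Two cosmetic remarks: whether the balls are open or closed only changes strict versus nonstrict inequalities, and either way the contraction bound $1/2<1$ survives, so your Neumann-series step is unaffected; and the whole statement (like your covering of $0$) tacitly assumes $\left\|T-T_\delta\right\|_{H^1}>0$, i.e.\ excludes the degenerate case $T_\delta=T$, which is harmless here. No gap.
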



Then we provide the proof of \Cref{thm:spectra_eigenfunction}.
\begin{proof}
    Let $\Gamma_{j}=\left\{z \in \mathbb{C}:\left|z-\lambda_{j}\right|=\gamma_{j} / 3\right\}$, $U_{j}$ be the area enclosed by $\Gamma_{j}$, $\sigma_j^\delta=\sigma\left(T_\delta\right) \bigcap U_{j}$.
    Using the definition of $\Gamma_{j}$ and $\gamma_{m}$, we know $\Gamma_{j} \subset \rho(T)$ and $\Gamma_{j} \subset \rho\left(T_\delta\right)$ for any $j \leq m $.
    
    In order to apply \Cref{thm:spectra_perturbation}, we need to verify the condition
    \begin{equation}
    	\label{eq:spectra_condition}
    	\left\|\left(T-T_\delta\right) T_\delta\right\|_{H^{1}} \leq \min _{z \in \Gamma_{m}} \frac{|z|}{\left\|(z-T)^{-1}\right\|_{H^{1}}}
    \end{equation}
    
    Using \Cref{lemma:spectra} and the choice of $\Gamma_{j}$, we have
    \begin{align*}
        &\min _{z \in \Gamma_{m}} \frac{|z|}{\left\|(z-T)^{-1}\right\|_{H^{1}}}\\
        \geq{}& \frac{\min _{z \in \Gamma_{m}}|z|}{\max _{z \in \Gamma_{m}}\left\|(z-T)^{-1}\right\|_{H^{1}}}\\
        \geq{}&\left(\left|\lambda_{m}\right|-\gamma_{m} / 3\right) \min _{z \in \Gamma_{m}, j \in \mathbb{N}} \left|z-\lambda_{j}\right|\\
        ={}&\left(\left|\lambda_{m}\right|-\gamma_{m} / 3\right) \gamma_{m} / 3
    \end{align*}
    
    Then, using the assumption that $\left\|\left(T-T_\delta\right) T_\delta\right\|_{H^1} \leq\left(\left|\lambda_{m}\right|-\gamma_{m} / 3\right) \gamma_{m} / 3$, the condition \Cref{eq:spectra_condition} is therefore satisfied.
    
    Thus, using \Cref{thm:spectra_perturbation}, we have
    \[\dim \left(E\left(\lambda_{m}, T \right)\right)=\dim \left(E\left(\sigma_{m}^\delta, T_\delta\right) \right)\]
    which, in combination with \Cref{lemma:spectra_eigenvalue}, implies
    \[\left|\lambda_m^{\delta}-\lambda_m\right| \leq 2 \left\|T-T_\delta\right\|_{H^1}\]
    
    Moreover, for any $x \in E\left(\lambda_{m}, T\right)$, since $\max _{z \in \Gamma_{m}}\left\|(z-T)^{-1}\right\|_{H^{1}}\leq 3/\gamma_{m}$ from \Cref{lemma:spectra},
    \begin{align*}
        &\left\|x-E\left(\sigma_m^\delta, T_\delta\right) x\right\|_{H^{1}} \\
        \leq {}&  \frac{2 \cdot 3/\gamma_{m} \cdot \gamma_{m} / 3}{\min _{z \in \Gamma_{m}}|z|}\left(\left\|\left(T-T_\delta\right) x\right\|_{H^{1}}+ \frac{3}{\gamma_{m}} \left\|\left(T-T_\delta\right) T_\delta\right\|_{H^{1}}\|x\|_{H^{1}}\right)\\
        \leq {}& \frac{2}{\left|\lambda_{m}\right|-\gamma_{m} / 3}\left(\left\|\left(T-T_\delta\right) x\right\|_{H^{1}}+ \frac{3}{\gamma_{m}} \left\|\left(T-T_\delta\right) T_\delta\right\|_{H^{1}}\|x\|_{H^{1}}\right)\\
        = {} & \vphantom{\frac{1}{\left|\lambda_{m}\right|-\gamma_{m} / 3}} C \left(\left\|\left(T-T_\delta\right) x\right\|_{H^{1}}+\left\|\left(T-T_\delta\right) T_\delta\right\|_{H^{1}}\|x\|_{H^{1}}\right)
    \end{align*}
	We thus get the convergence of eigenspace.
\end{proof}

\section{\texorpdfstring{Proof in \Cref{remark:h-1}}{Proof in Remark 3.1}}
\label{sec:remark_proof}

If we want to generalize to the setting $f\in H^{-1}$, we should first revise the right hand side of the model \Cref{eq:model} to the following duality pairing form,
\[F_{\mathrm{in}}(\mathbf{x})=\langle \bar{R}_{\delta}(\mathbf{x}, \mathbf{y}), f(\mathbf{y}) \rangle, \quad F_{\mathrm{bd}}(\mathbf{x})=\langle \bar{\bar{R}}_{\delta}(\mathbf{x}, \mathbf{y}), f(\mathbf{y}) \rangle \]

Notice that, for the duality pairing between $f\in H^{-1}(\Omega)$ and the kernel function $R_\delta(\mathbf{x},\mathbf{y})$ (as a function of $\mathbf{y}$) to be well defined, we need $R_\delta(\mathbf{x},\mathbf{y}) \in H_0^1(\Omega)$. However, if we take $\mathbf{x}$ close to $\partial \Omega$ such that $\text{dist}(\mathbf{x},\partial \Omega)<\delta$, then according to Assumption 2.2(d), we have $R_\delta(\mathbf{x},\mathbf{y})\geq C_\delta\gamma_0>0$. Thus, the definition of duality pairings between $f$ and kernel functions is unclear. Alternatively, we propose to extend the domain as $\Omega_\delta=\left\{x| \text{dist}(\mathbf{x}, \Omega) \leq 2\delta \right\}$, and define $f \in H^{-1}(\Omega_\delta)$. It is easy to see that, as a consequence of Assumption 2.2(c) and $\mathbf{x}\in \Omega$, $R_\delta(\mathbf{x},\mathbf{y})$ now belongs to $H_0^1(\Omega_\delta)$. Then we are ready to provide the $L^2$ estimate of $u_\delta$.

\begin{proposition}
    If $f \in H^{-1}(\Omega_\delta)$, then $u_{\delta} \in$ $L^2(\Omega)$ and the following estimate, with constant $C>0$ independent of $\delta$,
    \[\left\|u_{\delta}\right\|_{L^2(\Omega)} \leq \frac{C}{\delta} \left\|f\right\|_{H^{-1}(\Omega)}\]
\end{proposition}
\begin{proof}
    Since we change the model by eliminating the right-hand side of the second equation in \Cref{eq:model}, we have $F_{\mathrm{bd}}(\mathbf{x})=\langle \bar{\bar{R}}_{\delta}(\mathbf{x}, \mathbf{y}) , f(\mathbf{y}) \rangle=0$ for $\mathbf{x} \in \partial \Omega$. Thus, combine \Cref{prop:h1_core} and \Cref{corol:l2_estimation}, we have
    \[C\|u_\delta\|_{L^2(\Omega)}^2 \leq \int_{\Omega} u_\delta(\mathbf{x}) F_{\mathrm{in}}(\mathbf{x}) \mathrm{d} \mathbf{x}\]
    It is by the  definition of dual pairing that we have
    \[\int_{\Omega} u_\delta(\mathbf{x}) F_{\mathrm{in}}(\mathbf{x}) \mathrm{d} \mathbf{x} =\langle \int_{\Omega} \bar{R}_\delta(\mathbf{x}, \mathbf{y}) u_\delta(\mathbf{x}) \mathrm{d} \mathbf{x}, f(\mathbf{y}) \rangle \leq \left\|f\right\|_{H^{-1}(\Omega_\delta)} \left\|\tilde{u}_\delta\right\|_{H_0^1(\Omega_\delta)}\]
    where $\tilde{u}_\delta(\mathbf{x}):=\int_{\Omega} \bar{R}_\delta(\mathbf{x}, \mathbf{y}) u_\delta(\mathbf{y}) \mathrm{d} \mathbf{y}$. We will therefore control the $H_0^1$ norm ($H^1$ norm) of $\tilde{u}_\delta$. By utilizing Cauchy-Schwarz inequality twice, it is obvious that
    \[\left\|\tilde{u}_\delta\right\|_{L^2(\Omega_\delta)}^2\leq C\left\|u_\delta\right\|_{L^2(\Omega)}^2\]
    
    Moreover, we have
    \begin{align*}
        \left\|\nabla \tilde{u}_\delta\right\|_{L^2(\Omega_\delta)}^2 = \int_{\Omega_\delta}\left(\int_{\Omega} \nabla_{\mathbf{x}} \bar{R}_\delta(\mathbf{x}, \mathbf{y}) u_\delta(\mathbf{y}) \mathrm{d} \mathbf{y} \right)^2 \mathrm{d} \mathbf{x} & \leq \frac{C}{\delta^2} \int_{\Omega_\delta}\left(\int_{\Omega} R_\delta(\mathbf{x}, \mathbf{y}) u_\delta(\mathbf{y}) \mathrm{d} \mathbf{y} \right)^2 \mathrm{d} \mathbf{x} \\
        & \leq \frac{C}{\delta^2} \left\|u_\delta\right\|_{L^2(\Omega)}^2
    \end{align*}
    
    Thus
    \[ \left\|\tilde{u}_\delta\right\|_{H^1(\Omega_\delta)}^2=\left\|\tilde{u}_\delta\right\|_{L^2(\Omega_\delta)}^2+\left\|\nabla \tilde{u}_\delta\right\|_{L^2(\Omega_\delta)}^2 \leq \frac{C}{\delta^2} \left\|u_\delta\right\|_{L^2(\Omega)}^2 \leq \frac{C}{\delta^2} \left\|f\right\|_{H^{-1}(\Omega_\delta)} \left\|\tilde{u}_\delta\right\|_{H^1(\Omega_\delta)}\]
    which gives us $\left\|\tilde{u}_\delta\right\|_{H^1(\Omega_\delta)}\leq \frac{C}{\delta^2} \left\|f\right\|_{H^{-1}(\Omega_\delta)}$. Finally, we have
    \[C\|u_\delta\|_{L^2(\Omega)}^2 \leq \left\|f\right\|_{H^{-1}(\Omega_\delta)} \left\|\tilde{u}_\delta\right\|_{H^1(\Omega_\delta)} \leq \frac{C}{\delta^2} \left\|f\right\|_{H^{-1}(\Omega_\delta)}^2\]
    which proves 
    \[\|u_\delta\|_{L^2(\Omega)}\leq \frac{C}{\delta} \left\|f\right\|_{H^{-1}(\Omega_\delta)}\]
\end{proof}

\end{document}